\documentclass[12pt]{amsart}

\usepackage[utf8]{inputenc}
\usepackage[colorlinks,anchorcolor=blue,citecolor=blue,linkcolor=blue,urlcolor=blue,bookmarksdepth=2]{hyperref}

\usepackage{mathdots}
\usepackage{diagbox}

\usepackage{amssymb, amscd, txfonts}
\usepackage{graphicx}
\usepackage[all]{xy} 
\usepackage{array, booktabs}
\usepackage{tabularx} 
\usepackage{appendix}

\numberwithin{equation}{section}

\newtheorem{theorem}{Theorem}[section]
\newtheorem{proposition}[theorem]{Proposition}
\newtheorem{lemma}[theorem]{Lemma}
\newtheorem{corollary}[theorem]{Corollary}

\theoremstyle{definition}
\newtheorem{definition}[theorem]{Definition}

\theoremstyle{remark}
\newtheorem{remark}[theorem]{Remark}
\newtheorem{claim}[theorem]{Claim}
\newtheorem{step}{Step}

\newcommand{\Z}{\mathbb{Z}}
\newcommand{\Q}{\mathbb{Q}}
\newcommand{\R}{\mathbb{R}}
\newcommand{\C}{\mathbb{C}}
\newcommand{\Zp}{\mathbb{Z}_{p}}
\newcommand{\Qp}{\mathbb{Q}_{p}}
\newcommand{\A}{\mathbb{A}}
\newcommand{\Af}{\mathbb{A}_{{\rm fin}}}
\newcommand{\SO}{{\rm SO}}

\newcommand{\bbA}{\mathbb{A}}

\newcommand{\bbC}{\mathbb{C}}

\newcommand{\bbQ}{\mathbb{Q}}

\newcommand{\bbR}{\mathbb{R}}

\newcommand{\bbZ}{\mathbb{Z}}

\newcommand{\calF}{\mathcal{F}}

\newcommand{\calS}{\mathcal{S}}

\newcommand{\frakg}{\mathfrak{g}}

\newcommand{\frakk}{\mathfrak{k}}

\newcommand{\frakl}{\mathfrak{l}}

\newcommand{\frakp}{\mathfrak{p}}

\newcommand{\frakq}{\mathfrak{q}}

\newcommand{\frakS}{\mathfrak{S}}

\newcommand{\fraku}{\mathfrak{u}}

\newcommand{\GL}{\mathrm{GL}}

\newcommand{\SL}{\mathrm{SL}}

\newcommand{\Sp}{\mathrm{Sp}}

\newcommand{\PGSp}{\mathrm{PGSp}}
\newcommand{\Mp}{\mathrm{Mp}}

\newcommand{\Sym}{\mathrm{Sym}}

\newcommand{\diag}{\mathrm{diag}}
\newcommand{\sgn}{\mathrm{sgn}}

\newcommand{\vep}{\varepsilon}
\newcommand{\isom}{\cong}

\newcommand{\Irr}{\mathrm{Irr}}

\begin{document}

\title[]{Holomorphic differential forms on some orthogonal modular varieties}
\author[]{Shuji Horinaga}
\address{NTT Institute for Fundamental Mathematics, NTT Inc., Japan}
\email{syuuji.horinaga@ntt.com, shorinaga@gmail.com}
\author[]{Shouhei Ma}
\address{Department~of~Mathematics, Institute~of~Science~Tokyo, Tokyo 152-8551, Japan}
\email{ma@math.titech.ac.jp}
\thanks{Supported by JSPS KAKENHI 23K12965 and 21H00971.}

\begin{abstract}
We construct holomorphic differential forms of many degrees, 
including the minimum possible one, 
on the modular varieties associated to the even lattices 
of signature $(2, n)$ with $n\equiv 1, 3$ mod $8$ and discriminant $-2$ 
in the range $n\geq 25$. 
This is the first example of holomorphic differential forms of non-top degree 
on orthogonal modular varieties. 
The proof uses the Arthur multiplicity formula 
in the theory of automorphic representations. 
\end{abstract} 

\maketitle

\section{Introduction}\label{sec: intro}

Historically, 
the study of the birational type of higher dimensional modular varieties 
started with construction of holomorphic differential forms of non-top degree \cite{Fr}. 
Soon after, with the development of the theory of toroidal compactifications, 
the focus shifted to pluricanonical forms and especially their growth, the Kodaira dimension. 
While holomorphic differential forms are concerned with vector-valued modular forms,  
pluricanonical forms are concerned with scalar-valued ones, which are in general easier to construct. 
Nowadays, Kodaira dimension is regarded as the primary invariant in the birational classification. 
However, if one wants to look into finer geometry in the next stage 
such as Hodge structures and subvarieties, 
one needs to proceed (or go back) to holomorphic differential forms 
as the next set of fundamental birational invariants. 

In the case of orthogonal modular varieties, 
we now know much about the Kodaira dimension in higher dimension 
(\cite{GHS}, \cite{Ma1}), 
while we know nothing about holomorphic differential forms except for 
a basic vanishing theorem in low degree (\cite{Ma2}). 
The purpose of this paper is to construct holomorphic differential forms on 
a special class of orthogonal modular varieties,  
with the aid of the Arthur multiplicity formula in the theory of automorphic representations. 
At present this technique is available only for unimodular-like lattices 
(as is usual with applications of the Arthur theory), 
but we tried to establish the framework of application 
in a wider generality. 

Let $L$ be a lattice of signature $(2, n)$. 
Let $\mathcal{D}$ be the Hermitian symmetric domain attached to $L$, 
and ${\rm O}^{+}(L)$ be the subgroup of 
the orthogonal group ${\rm O}(L)$ preserving $\mathcal{D}$.  
The quotient 
${\rm O}^{+}(L)\backslash \mathcal{D}$ 
has the structure of a quasi-projective variety of dimension $n$, 
known as an \textit{orthogonal modular variety}. 
For some reasons coming from automorphic representations 
(explained later), 
we assume $n\equiv 1, 3 \bmod 8$ and consider the following special even lattices: 
\begin{equation}\label{eqn: lattice intro}
L = 
\begin{cases}
2U \oplus m E_8 \oplus A_1 & \; n\equiv 3 \bmod 8, \\ 
2U \oplus m E_8 \oplus E_7 & \; n\equiv 1 \bmod 8. 
\end{cases}
\end{equation}
Here $U$ is the integral hyperbolic plane, 
and the root lattices are the negative-definite ones. 
Thus $L$ is the even lattice of minimal discriminant in each $n$. 
It is not unimodular, but has the special property that 
${\SO}(L\otimes {\Zp})$ is a hyperspecial subgroup of 
${\SO}(L\otimes {\Qp})$ for every prime $p<\infty$. 
With this choice of lattice, we shall write simply 
\begin{equation*}
\mathcal{F}_{n} = {\rm O}^{+}(L) \backslash \mathcal{D}. 
\end{equation*}
Our goal is to construct holomorphic differential forms 
on a smooth projective model of $\mathcal{F}_{n}$. 
It is known that there is no nonzero holomorphic $k$-form 
in the range $k<n/2$ (see, e.g., \cite{Ma2} Chapter 9). 
In what follows, we consider $n/2< k <n$. 

Our main result is the following. 

\begin{theorem}\label{thm: main intro}
Let $n\equiv 1, 3$ mod $8$ with $n\geq 25$. 
There exists a nonzero holomorphic $k$-form 
on a smooth projective model of $\mathcal{F}_{n}$ for 

(i) every odd $k$ in the range described in 
Table \ref{table: range k odd}; 

(ii) every even $k$ in the range described in 
Table \ref{table: range k even}; 

(iii) for $25 \leq n \leq 35$, the values of $k$ in 
Table \ref{table: small n intro}. 
\end{theorem}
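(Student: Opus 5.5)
The plan is to translate holomorphic $k$-forms into vector-valued cusp forms, and then produce those cusp forms via Arthur's multiplicity formula for $\SO(L\otimes\Q)$.

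\emph{Step 1: reduction to vector-valued modular forms.} By the description of holomorphic forms on orthogonal modular varieties (\cite{Ma2}), for $n/2<k<n$ the holomorphic $k$-forms on a smooth projective model of $\mathcal{F}_n$ correspond to vector-valued cusp forms for ${\rm O}^{+}(L)$. The relevant weight is $(\lambda,k)$ with $\lambda=\wedge^{k}$, viewed as a representation of ${\rm O}(n,\R)$, and the forms must satisfy the determinant character condition. The cuspidality must be strong enough to guarantee extension over the boundary and over the ramification divisors. Here I would use the \cite{Ma2} statement that, in this range, such cusp forms extend holomorphically to any smooth projective model. So it suffices to show that this space of cusp forms is nonzero.

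\emph{Step 2: from cusp forms to automorphic representations.} A nonzero cusp form of this weight is the same as a cuspidal automorphic representation $\pi$ of $G=\SO(L\otimes\Q)$ (or its $\mathrm{O}$-version) with the following properties:
\begin{itemize}
\item $\pi_\infty$ is the holomorphic discrete series whose minimal $K$-type is $\det^{k}\otimes\wedge^{k}$; equivalently, the $(\frakg,K)$-cohomology of $\pi_\infty$ contributes to $H^{k,0}$.
\item $\pi_p$ is unramified with respect to the hyperspecial subgroup $\SO(L\otimes\Zp)$ for every prime $p$. This is where the choice of $L$ in \eqref{eqn: lattice intro} is used.
\end{itemize}
Since $n$ is odd, $G$ is of type $B$ with dual group $\Sp_{n+1}(\C)$, and Arthur's parameters are formal sums $\psi=\boxplus\,\pi_i[d_i]$ of cuspidal self-dual representations of $\GL_{m_i}$ of symplectic type (possibly twisted by $\mathrm{Sym}$-data), with $\sum m_id_i=n+1$. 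The Hodge type $(k,0)$ with $k<n$ forces the parameter to be non-tempered. I would choose
\[
\psi=\pi_1\boxplus\cdots\boxplus\pi_r\boxplus[2d]\quad\text{or}\quad \psi=\pi'[2]\boxplus\cdots ,
\]
where $\pi_1,\dots,\pi_r$ are level-one algebraic regular cusp forms of symplectic type. For example, classical Siegel or elliptic level-one cusp forms of weight $w$ give $\GL_2$ representations with infinitesimal character $\pm(w-1)/2$. The infinitesimal character of $\psi$ must match that of the cohomological $\pi_\infty$ of degree $(k,0)$, which is roughly $\{ (n-1)/2,\dots \}$ with one gap determined by $k$. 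This dictates which weights $w$ and which $d$ are needed, and the existence of level-one forms of those weights is what produces the ranges in Tables~\ref{table: range k odd}, \ref{table: range k even} and \ref{table: small n intro}. In particular, the condition $n\ge25$ should come from the first level-one cusp forms (weight $12$ and up) fitting into the required gaps.

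\emph{Step 3: multiplicity formula.} I would invoke Arthur's multiplicity formula, via Adams--Johnson packets at infinity (as in Chenevier--Lannes/Taïbi for level one), to show that the holomorphic member $\pi_\infty$ of the Archimedean packet lies in the global packet with multiplicity one. This requires checking that its character $\langle\cdot,\pi_\infty\rangle$ on $S_\psi$ equals Arthur's $\epsilon_\psi$. The $\epsilon_\psi$ is computed from symplectic root numbers $\epsilon(1/2,\pi_i\times\pi_j)$ (all $+1$ for level one with the relevant parity, or $(-1)^{\cdot}$ from the weights). The Adams--Johnson character of the holomorphic representation is computed by locating the holomorphic $K$-type among the $\theta$-stable parabolic data.

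I expect this sign comparison to be the main obstacle, together with the verification that the multiplicity-one packet member is holomorphic of exactly weight $\wedge^k$. The parity conditions on $k$ and on $n$ mod $8$ should emerge precisely there. My first attempt would be the simplest parameter $\psi=\Delta_w\boxplus[n-1]$, with a single $\GL_2$ form, and I would then enlarge to several forms to fill out the tables.
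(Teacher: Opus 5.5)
Your overall architecture is the paper's: holomorphic forms $\to$ vector-valued modular forms $\to$ automorphic representations unramified at every $p$ $\to$ Arthur's multiplicity formula with an Adams--Johnson packet at $\infty$ and the check $\eta_\infty\circ\Delta_\infty=\varepsilon_\psi$. But Step~2 misidentifies how the degree $k$ is encoded, and the parameters you propose cannot give $n/2<k<n$.

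Holomorphic $k$-forms are $(\mathfrak{g},K)$-cohomology with \emph{trivial} coefficients. So $\pi_\infty$ must have infinitesimal character exactly $\rho=(n/2,n/2-1,\ldots,1/2)$ for every $k$; there is no gap depending on $k$. Nor can $\pi_\infty$ be a holomorphic discrete series: discrete series have cohomology only in degree $n$, so they give only canonical forms. The right target is the singular, non-tempered unitary lowest weight module $L(\wedge^{n-k},-k)\simeq A_{\mathfrak{q}}$ with $\mathfrak{l}\simeq\mathfrak{u}(1,d_1-1)\oplus\mathfrak{u}(0,d_2)\oplus\cdots\oplus\mathfrak{so}(0,d_0+1)$.

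The degree $k$ is read off from Arthur's $\SL_2$, not from the infinitesimal character. Write $\psi_\infty=\bigoplus_i\rho_{k_i}\boxtimes\Sym^{d_i-1}$ (possibly $\oplus\,\sgn^\delta\boxtimes\Sym^{d_0-1}$) with $k_1>k_2>\cdots$. Suppose it satisfies $k_1+d_1=n+1$, $k_i-k_{i+1}=d_i+d_{i+1}$, and $k_r=d_r$ (resp.\ $d_r+d_0$); this is just the condition that the infinitesimal character is $\rho$. Then $L(\wedge^{n-k},-k)\in\Pi(\psi_\infty)$ with $k=n+1-d_1$. The noncompact factor must sit in the block of largest weight. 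If it sits in a lower block or in the $\mathfrak{so}$ block, $\mathfrak{u}\cap\mathfrak{p}$ meets both $\mathfrak{p}_+$ and $\mathfrak{p}_-$, and there is no $(k,0)$-cohomology.

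Consequently, your candidates reach only the top degree. In $\pi_1\boxplus\cdots\boxplus\pi_r\boxplus[2d]$, and in particular in $\Delta_w\boxplus[n-1]$ (where infinitesimal character $\rho$ forces $w=n+1$), every block of positive weight has $d_i=1$. The Adams--Johnson members are then either discrete series ($\mathfrak{l}_t$ compact) or have no $(k,0)$-cohomology, so at best you get $k=n$.

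The missing idea is to put a large Arthur $\SL_2$ on the top block. Examples are $(\sigma_{(n+3)/2},(n+1)/2)$ for $k=(n+1)/2$ when $n\equiv1\bmod 8$, or $(\sigma_{n+2-d},d)\oplus(\sigma_{(n+3)/2-d},(n+1)/2-d)$ for $k=n+1-d$ when $n\equiv3\bmod 8$. In general you fill the rest of $\rho$ with further level-one elliptic forms, $\Sym^2\sigma_k$, the $\GL_4$/$\GL_5$ lifts $\tau_{j,k},\xi_{j,k}$ of genus-$2$ Siegel forms, or $(\mathbf{1},\ell)$. You then compute $\eta_\infty$ on the specific member $\pi_{t_0}$, $t_0=\diag(1,-1_2,1,\ldots,1)$, via $(\tau_0(t_0)-\rho^\vee)|_{\calS_\psi}$. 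The congruences modulo $4$ and $8$ and the ranges in the tables come from this sign comparison, together with the existence of level-one forms of the forced weights.

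Two smaller points. First, Arthur's formula describes $L^2_{disc}$, and parameters such as $(\sigma,d)\oplus(\mathbf{1},\ell)$ need not be cuspidal. You should therefore work with square-integrable modular forms plus Pommerening's extension theorem rather than insist on cusp forms. Second, the passage from the adelic to the classical quotient uses class number one and $\SO^+(L)\neq\SO(L)\neq{\rm O}(L)$.
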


\renewcommand{\arraystretch}{1.2}
\begin{table}
    \centering
    \caption{Odd $k$}
    \label{table: range k odd}
    \begin{tabular}{c|c|c|c}
    $n\bmod 8$ & $k \bmod 4$ & range of $k$ & bound of $n$ \\
    \hline
    $1$ & $1$ & $(n+1)/2 \leq k <n$ & $n\geq 33$  \\
    \hline
    $3$ & $1$ & $(n+7)/2 \leq k <n$ & $n\geq 35$  \\
    \hline
    $1, 3$ & $3$ & $(3n+25)/4 \leq k <n$ & $n\geq 33$ 
    \end{tabular}
\end{table}

\begin{table}
    \centering
    \caption{Even $k$}
    \label{table: range k even}
    \begin{tabular}{c|c|c|c}
    $n\bmod 8$ & $k \bmod 8$ & range of $k$ & bound of $n$ \\
    \hline
    $1, 3$ & $6$ & $(4n+4)/5 \leq k <n$ & $n\geq 25$  \\
    \hline
    $1, 3$ & $0$ & $(6n+8)/7 < k <n$ & $n\geq 33$ \\
    \hline
    $1, 3$ & $4$ & $(6n+18)/7 \leq k <n$ & $n\geq 57$ \\
    \hline
    $1$ & $2$ &  $(4n+24)/5 \leq k <n$ & $n\geq 65$ \\
    \hline
     $3$ & $2$ &  $(8n+12)/9 \leq k <n$ & $n\geq 27$ 
    \end{tabular}
\end{table}

\begin{table}
    \centering
    \caption{Small $n$}
    \label{table: small n intro}
    \begin{tabular}{c|c}
    $n$ & $k$ \\
    \hline
    $25$ & $17, 22$  \\
    \hline
    $27$ & $17, 23, 25, 26$   \\
    \hline
    $33$ &  $17, 21, 25, 27, 29, 30, 31, 32$  \\
    \hline
    $35$ &  $21, 25, 27, 29, 30, 31, 33, 34$  
    \end{tabular}
\end{table}

In the cases (i) and (ii), we intend to attain many values of $k$ 
in systematic ways for sufficiently large $n$. 
In general, it seems more easy to attain odd $k$ than even $k$. 
Note that the minimal possible value $k=(n+1)/2$ is attained 
in the case $(n, k)\equiv (1, 1)$. 
This shows that the vanishing bound $k<n/2$ is optimal 
as a general bound. 
Furthermore, in that case, 
the space of square-integrable $(n+1)/2$-forms 
is isomorphic to the space of cusp forms of weight $(n+3)/2$ 
for ${\SL}_2(\Z)$ (Proposition \ref{cor: exact isom}). 
The case (iii) supplements (i) and (ii) by giving 
an explicit and improved list for small values of $n$.

Theorem \ref{thm: main intro} is the first example of 
holomorphic differential forms of non-top degree 
on orthogonal modular varieties. 
The distribution of $k$ looks rather free 
at least when $k$ is odd or close to $n$.  
This behavior seems different from the Siegel modular case, 
where there is a strong constraint on 
the possible values of $k$ (\cite{We}). 

The bound $n\geq 25$ in Theorem \ref{thm: main intro} is optimal.  
Indeed, $\mathcal{F}_{19}$ is the moduli space of 
$K3$ surfaces of degree $2$, which is unirational. 
Similarly, moduli interpretation via $K3$ surfaces 
tells us that 
$\mathcal{F}_{17}$, $\mathcal{F}_{11}$ and $\mathcal{F}_{9}$ 
are rational (\cite{Ma0}). 
In our view, this is a place where 
algebraic geometry and automorphic representations 
affect each other indirectly. 
Algebraic geometry produces unirational modular varieties 
in relatively small dimension, 
and this gives a lower bound (of weight) where 
automorphic construction ceases to work. 
Conversely, automorphic technique produces 
holomorphic forms in higher dimension, 
which serve as obstructions for finding unirational moduli spaces. 

In the case $n\equiv 3 \bmod 8$, 
it is proved in \cite{GHS} that 
$\mathcal{F}_{n}$ is of general type in the range $n\geq 43$. 
Similarly, in the case $n\equiv 1 \bmod 8$, we can show that 
$\mathcal{F}_{n}$ is of general type if $n=25$ or $n\geq 41$ 
(Proposition \ref{prop: Kodaira dim}). 
It appears, at least apparently, that 
these methods do not cover the remaining cases $n= 27, 33, 35$. 
Theorem \ref{thm: main intro} tells us an information
in these cases: 

\begin{corollary}
    $\mathcal{F}_{27}$, $\mathcal{F}_{33}$ and $\mathcal{F}_{35}$ 
    are not unirational. 
\end{corollary}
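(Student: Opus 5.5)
The plan is to derive the corollary from Theorem \ref{thm: main intro} using the birational invariance of holomorphic forms. The key fact is that for a smooth projective variety $X$, the spaces $H^0(X,\Omega_X^k)$ are birational invariants. They also behave contravariantly under dominant rational maps: if $f\colon Y \dashrightarrow X$ is dominant and generically finite (or, more generally, dominant with $\dim Y\geq \dim X$), then pulling back gives an injection
\begin{equation*}
H^0(X,\Omega_X^k) \hookrightarrow H^0(\tilde Y,\Omega_{\tilde Y}^k),
\end{equation*}
where $\tilde Y$ is a resolution of the indeterminacy of $f$. Injectivity holds because a dominant morphism is generically smooth in characteristic $0$. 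So the pullback of a nonzero holomorphic form is nonzero on a dense open set.

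The argument then runs as follows. Suppose $\mathcal{F}_n$ were unirational for $n\in\{27,33,35\}$. Then there is a dominant rational map $\mathbb{P}^N\dashrightarrow X$ onto a smooth projective model $X$ of $\mathcal{F}_n$. By cutting with a general linear subspace, we may take $N=n$, so the map is generically finite. Resolve the indeterminacy by blowing up to obtain a morphism $\tilde{\mathbb{P}}\to X$, where $\tilde{\mathbb{P}}$ is smooth projective and rational. Since $\tilde{\mathbb{P}}$ is rational and $H^0(\Omega^k)$ is a birational invariant, $H^0(\tilde{\mathbb{P}},\Omega^k)=H^0(\mathbb{P}^n,\Omega^k)=0$ for all $k\geq 1$. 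Combined with the injection above, this gives $H^0(X,\Omega^k_X)=0$ for every $k\geq1$.

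To conclude, it suffices to exhibit one $k\geq 1$ with $H^0(X,\Omega^k_X)\neq 0$, and Theorem \ref{thm: main intro}(iii) supplies one for each case: for example $k=17$ when $n=27$ or $33$, and $k=21$ when $n=35$. This contradicts the vanishing and proves that none of the three varieties is unirational.

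There is no real obstacle; the only care needed is the generic smoothness step that makes pullback injective. The corollary itself is stated only to cover the cases left open by the general-type results for $n=25$ and $n\geq 41$.
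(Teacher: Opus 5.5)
Your argument is correct and matches the paper's reasoning. The paper deduces the corollary directly from Theorem~\ref{thm: main intro}(iii), which gives a nonzero holomorphic $k$-form on a smooth projective model of $\mathcal{F}_{n}$ for $n=27,33,35$ (for instance $k=17,17,21$). Such a form cannot exist on a unirational variety, since it would pull back injectively to a rational smooth projective variety. As with the theorem itself, the conclusion is conditional on the weighted twisted fundamental lemma underlying the Arthur multiplicity formula.
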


In the case $n=27$, 
the lattice $L=2U\oplus 3E_8\oplus A_1$ contains 
the even unimodular lattice $II_{2,26}=2U\oplus 3E_8$ as a sublattice.  
Accordingly, $\mathcal{F}_{27}$ contains the modular variety 
$\mathcal{F}_{26}$ for ${\rm O}^{+}(II_{2,26})$ as a divisor. 
Then, by restricting 
the holomorphic forms on $\mathcal{F}_{27}$ constructed in 
Theorem \ref{thm: main intro} (iii) to this divisor, 
we obtain holomorphic forms on $\mathcal{F}_{26}$, 
though possibly zero. 
This is the first potential construction of 
a holomorphic tensor on $\mathcal{F}_{26}$ 
whose birational type remains mysterious. 
One possible approach for proving nonvanishing of restriction 
(if one believes so) would be to consider the chain 
$\mathcal{F}_{25} \subset \mathcal{F}_{26} \subset \mathcal{F}_{27}$ 
and show that the $17$-form on 
$\mathcal{F}_{27}$ restricts to the one on $\mathcal{F}_{25}$.

The proof of Theorem \ref{thm: main intro} uses the theory of automorphic representations. 
Holomorphic differential forms on a smooth projective model of $\mathcal{F}_{n}$ 
can be identified with certain vector-valued modular forms for 
${\SO}^{+}(L)$ (\cite{Ma2}). 
By a standard procedure, the problem of constructing such a modular form 
can be translated to finding a certain type of automorphic representation of 
${\SO}(L\otimes {\A})$ with prescribed archimedean component. 

Arthur's multiplicity formula 
(\cite{Arthur}, \cite{AGI+}, \cite{Taibi_AMF}, \cite{2024_Ishimoto_Arthur}) 
is a powerful tool for this type of problem. 
Roughly speaking, for our purpose, 
it provides a lifting from a certain collection of automorphic representations of 
${\rm GL}_{N}({\A})$ with $N$ small 
to an automorphic representation of 
${\SO}(L\otimes {\A})$. 
For our input, we will take $1\leq N \leq 5$. 
The cases $N=2, 3$ will come from elliptic modular forms, 
and the cases $N=4, 5$ from vector-valued Siegel modular forms of genus $2$. 
Thus our actual inputs are elliptic modular forms and Siegel modular forms of genus $2$.

In general, inputs of the Arthur multiplicity formula are 
required to satisfy a certain character identity. 
This takes the form of a product formula over $p\leq \infty$ 
analogous to the Hilbert reciprocity. 
In our case, 
we take the local character at $p<\infty$ to be trivial. 
This reduces the character identity to an equality 
at the archimedean place (see Proposition \ref{prop: criterion}), 
which is eventually turned to a combinatorial puzzle 
among classical cusp forms as above. 

At this point we can explain our choice of the lattices. 
First, when the lattice $L$ has odd rank, 
we can pass from 
${\rm O}^{+}(L)=\langle {\SO}^{+}(L), -{\rm id} \rangle$ 
to ${\SO}^{+}(L)$ without changing the modular variety, 
and also the relevant representation theory is somewhat simpler 
than the even rank case. 
Secondly, the condition $n\equiv 1, 3 \bmod 8$ comes from 
the requirement that 
${\SO}(L\otimes{\Qp})$ is split ($\Leftrightarrow$ quasi-split) 
at every $p<\infty$ 
(Proposition \ref{prop: everywhere split}). 
Finally, we need ${\SO}(L\otimes{\Zp})$ to be hyperspecial 
at every $p<\infty$, 
due to the current lack of local newform theory. 
This forces us to work with the unimodular-like lattices 
\eqref{eqn: lattice intro}. 

Application of the Arthur multiplicity formula to the birational type of 
modular varieties was done by 
Maeda, Yamauchi and the first-named author \cite{HMY} 
in the case of ball quotients. 
While they constructed pluricanonical forms, 
our target is holomorphic differential forms. 
The latter has in a sense smaller weight than canonical forms, 
but instead we have the Pommerening extension theorem \cite{Po}. 
Finally, it should be noted that, 
since the Arthur multiplicity formula 
is conditional on the weighted twisted fundamental lemma, 
so is our result. 

This paper is organized as follows.
In \S \ref{sec: auto rep to hol form} and \S \ref{sec: adelize}, 
we reduce the construction of holomorphic differential forms
to finding certain adelic automorphic representations. 
This is done with general lattices. 
In \S \ref{sec: lattice}, 
we specialize our lattices to \eqref{eqn: lattice intro}. 
In \S \ref{sec: AMT}, we recall the Arthur multiplicity formula.
\S \ref{sec: AJ} is devoted to preliminary calculations 
at the archimedean place. 
In \S \ref{sec: A-packet}, we construct the desired $A$-parameters.
This completes the proof of Theorem 1.1.
In \S \ref{sec: everywhere split}, 
we classify rational quadratic forms of signature $(2, n)$ 
whose orthogonal group is split at every $p<\infty$. 
In \S \ref{sec: Kodaira dim}, we supply a general-type result 
in the case $n\equiv 1 \bmod 8$.

Throughout this paper, 
a \textit{lattice} means a free $\Z$-module $L$ of finite rank 
equipped with a nondegenerate symmetric bilinear form 
$(\: , \: ) : L\times L\ \to \Z$.  
The symbol $L\otimes F=L\otimes_{{\Z}}F$ 
stands for the quadratic space over a field $F$ obtained 
from the lattice $L$ by extension of scalars. 
Restricted direct product over $p\leq \infty$ (or $p<\infty$) 
will be simply denoted by $\prod_{p}$. 
Similarly, restricted tensor product will be 
simply denoted by $\otimes_{p}$.

\section{Holomorphic differential forms and lowest weight modules}\label{sec: auto rep to hol form}

In \S \ref{sec: auto rep to hol form} and \S \ref{sec: adelize}, 
we translate the problem of constructing a holomorphic $k$-form on 
our orthogonal modular variety to 
finding a certain type of automorphic representation of ${\SO}(L\otimes {\A})$. 
This \S \ref{sec: auto rep to hol form} is devoted to 
the passage to  
the Lie group ${\SO}(L\otimes {\R})$, 
and the next \S \ref{sec: adelize} is devoted to 
the adelization. 

Let $L$ be a lattice of signature $(2, n)$ with $n\geq 3$. 
We write ${\SO}^{+}(L\otimes {\R})$ for the identity component of ${\SO}(L\otimes{\R})$, 
and let $K_{\infty}^{+}\simeq {\SO}(n, {\R})\times {\SO}(2, {\R})$ 
be a maximal compact subgroup of ${\SO}^{+}(L\otimes {\R})$. 
We also write $${\SO}^{+}(L)={\SO}(L)\cap {\SO}^{+}(L\otimes {\R}).$$ 
Referring to the following subsections 
for the notation and terminology, 
the result of this section can be stated as follows.

\begin{proposition}\label{prop: hol form via auto rep}
Let $\Gamma$ be a finite-index subgroup of ${\SO}^{+}(L)$. 
Suppose that the $K_{\infty}^{+}$-finite part of 
$L^{2}(\Gamma \backslash {\SO}^{+}(L\otimes {\R}))$ 
contains a lowest weight module 
of weight $(\wedge^{n-k}, -k)$ for some $n/2<k<n$. 
Then a smooth projective model of the modular variety 
$\Gamma \backslash \mathcal{D}$
has a nonzero holomorphic $k$-form. 
\end{proposition}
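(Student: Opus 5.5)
The plan is to pass from the lowest weight module to a vector-valued modular form, then to a holomorphic $k$-form on the regular part, and finally to extend it over a smooth projective model.

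First, recall the dictionary between vector-valued modular forms and $K_\infty^+$-types. Via the Borel embedding, the holomorphic tangent bundle of $\mathcal{D}$ is $\mathcal{L}\otimes\mathcal{E}$. Here $\mathcal{L}$ is the tautological line bundle, which corresponds to the character of ${\SO}(2,\R)$, and $\mathcal{E}$ is the rank $n$ bundle attached to the standard representation of ${\SO}(n,\R)$. Hence
\[
\Omega^k_{\mathcal{D}}\isom \wedge^k\mathcal{E}^\vee\otimes\mathcal{L}^{-k}\isom \wedge^{n-k}\mathcal{E}\otimes\mathcal{L}^{k},
\]
using $\wedge^k\mathcal{E}^\vee\isom\wedge^{n-k}\mathcal{E}\otimes\det\mathcal{E}^\vee$ and the triviality of $\det\mathcal{E}$ for ${\SO}$. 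A $\Gamma$-invariant holomorphic $k$-form on $\mathcal{D}$ is therefore a modular form of weight $(\wedge^{n-k},k)$, in the sense of \cite{Ma2}.

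Next, carry out the standard lifting. Write $\pi\subset L^2(\Gamma\backslash{\SO}^+(L\otimes\R))_{K^+_\infty\text{-fin}}$ for the lowest weight module. Its lowest $K_\infty^+$-type $\tau=(\wedge^{n-k},-k)$ is annihilated by $\frakp^-$. Take a nonzero $K_\infty^+$-equivariant map $\tau^\vee\to\pi$, or equivalently a vector $\phi\in(\pi\otimes\tau)^{K_\infty^+}$. From it we obtain a $\tau$-valued function $F$ on ${\SO}^+(L\otimes\R)$ satisfying $F(\gamma g\kappa)=\tau(\kappa)^{-1}F(g)$ for $\gamma\in\Gamma$, $\kappa\in K^+_\infty$, and killed by $\frakp^-$. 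Applying $f(z)=J(g,i)^{\cdot}F(g)$ with the automorphy factor then gives a holomorphic section of $\wedge^{n-k}\mathcal{E}\otimes\mathcal{L}^{k}$ over $\mathcal{D}$ that is $\Gamma$-invariant. The sign conventions (weight $-k$ versus $k$) must be matched with Borel's normalization, but this is bookkeeping. Square-integrability of the components of $F$ on $\Gamma\backslash{\SO}^+(L\otimes\R)$ translates into square-integrability of the form $\omega$ with respect to the invariant metric, i.e.\ $\int\omega\wedge\bar\omega<\infty$ up to constants on $\Gamma\backslash\mathcal{D}$. The reason is that the Petersson-type norm of $F$ at $g$ equals the invariant pointwise norm of $\omega$ at $g\cdot i$.

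Finally, extend to a smooth projective model. Choose a neat normal subgroup $\Gamma'\subset\Gamma$ of finite index. Then $\omega$ descends to a holomorphic $k$-form on the smooth variety $\Gamma'\backslash\mathcal{D}$ that is $\Gamma/\Gamma'$-invariant and $L^2$.

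There are two boundary issues, and handling them is the main obstacle. At the cusps, I would invoke the results of \cite{Ma2}: for $k>n/2$ (in fact for $k<n$ as well; the condition $k>n/2$ is where the weight $(\wedge^{n-k},k)$ forms can be nonzero), holomorphic $k$-forms on $\Gamma'\backslash\mathcal{D}$ extend holomorphically over a smooth toroidal compactification exactly when they are cusp forms, or more generally when they satisfy the $L^2$ condition. Square-integrable holomorphic forms of top or any degree extend across normal crossing boundary divisors, since an $L^2$ holomorphic $k$-form cannot have poles along a divisor with Poincaré-type growth. Concretely, in local coordinates $q=e^{2\pi i z}$ a pole $dq/q$ is not $L^2$. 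For the finite group $\Gamma/\Gamma'$, I would use Pommerening's theorem \cite{Po}, which says that holomorphic forms on the regular locus of a quotient singularity extend to any resolution. Together with the toroidal extension, this gives a holomorphic $k$-form on a resolution of the toroidal compactification of $\Gamma\backslash\mathcal{D}$. That resolution is a smooth projective model, and the form is a birational invariant. The delicate step is making sure the Pommerening extension also applies near the boundary points where the branch locus meets the boundary. I would handle this by working on $\Gamma'$'s toroidal compactification, taking a $\Gamma/\Gamma'$-equivariant resolution, and noting that invariant forms on a quotient pull back to forms on that resolution.
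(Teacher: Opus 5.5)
Your overall architecture matches the paper's. You identify $\Omega^k_{\mathcal{D}}$ with $\wedge^{n-k}\mathcal{E}\otimes\mathcal{L}^{\otimes k}$, and you turn a lowest weight vector into a $\Gamma$-invariant holomorphic section, as in Lemma \ref{lem: lift to G} and Proposition \ref{prop: auto form to LWM}. You then extend over a smooth compactification and handle torsion through a normal torsion-free $\Gamma'$ and $\Gamma/\Gamma'$-invariants. The gap is in the step you flag as the main one, extension across the boundary.

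Your local principle, that an $L^2$ holomorphic $k$-form cannot have a pole $dq/q$ along a boundary divisor, holds in top degree and for genuinely Poincar\'e-type metrics. In top degree $|\omega|^2\,dV$ is a constant multiple of $\omega\wedge\bar\omega$, hence metric-independent; for $k<n$ the expression $\int\omega\wedge\bar\omega$ has no meaning. The invariant metric on $\Gamma\backslash\mathcal{D}$ is not of Poincar\'e type transversally. Near a generic point of a boundary divisor $\{q_1=0\}$ attached to a ray $\R_{\ge 0}v_1$ in the interior of the positive cone, write $\mathrm{Im}\,z=tv_1+w$ with $w$ bounded, so $t\asymp-\log|q_1|$. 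The metric $-\partial\bar\partial\log Q(\mathrm{Im}\,z)$ is homogeneous of degree $-2$ in $\mathrm{Im}\,z$. Hence $dq_1/q_1$ and the transversal $dq_j$ all have pointwise norm $\asymp t$, while the volume form is $\asymp t^{-2n}\,dx\,dy$. For $\omega=\frac{dq_1}{q_1}\wedge dq_{j_2}\wedge\cdots\wedge dq_{j_k}$ this gives $\int|\omega|^2\,dV\asymp\int^{\infty}t^{2k-2n}\,dt$, which is finite for every $k\le n-1$. So in every non-top degree, in particular for $n/2<k<n$, square-integrability does not exclude logarithmic poles. The extension therefore does not follow from the $L^2$ property. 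The characterization via cusp forms that you attribute to \cite{Ma2} is also not what is needed.

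The missing ingredient is Pommerening's theorem \cite{Po}, which you have misread as a statement about quotient singularities. For $k<n$ and $\Gamma'$ torsion-free, \emph{every} holomorphic $k$-form on $\Gamma'\backslash\mathcal{D}$ extends holomorphically over a smooth projective compactification, with no growth condition. The mechanism is a global Koecher-type phenomenon, not a local $L^2$ estimate. This is how the paper gets $H^0(X,\Omega^k_X)\simeq M_{\wedge^{n-k},k}(\Gamma)$ in Proposition \ref{prop: MF to hol form}, so square-integrability is used only to produce the modular form, never to extend it. The torsion step then needs only the standard fact $H^0(X',\Omega^k_{X'})^{\Gamma/\Gamma'}=H^0(X,\Omega^k_X)$, which is essentially what your last sentence says.

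A smaller slip: the holomorphic tangent bundle is $\mathcal{L}^{\vee}\otimes\mathcal{E}$, not $\mathcal{L}\otimes\mathcal{E}$. Your version would give $\mathcal{L}^{-k}$, so your passage from $\wedge^k\mathcal{E}^\vee\otimes\mathcal{L}^{-k}$ to $\wedge^{n-k}\mathcal{E}\otimes\mathcal{L}^{k}$ is unjustified, even though the final formula is correct.
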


This section is devoted to explaining and proving Proposition \ref{prop: hol form via auto rep}. 
The process is divided into three steps, to which our subsections correspond: 
\begin{enumerate}
\item from modular forms to holomorphic differential forms (\S \ref{ssec: VOMF})
\item from automorphic forms on ${\SO}^{+}(L\otimes {\R})$ to modular forms (\S \ref{ssec: lift to G})
\item from lowest weight modules to automorphic forms (\S \ref{ssec: LWM})
\end{enumerate}

In the next \S \ref{sec: adelize}, 
we will specialize $\Gamma$ to ${\SO}^{+}(L)$. 

\subsection{Vector-valued modular forms}\label{ssec: VOMF}

In this subsection we prove that 
holomorphic differential forms on a smooth projective model of 
an orthogonal modular variety can be identified with certain vector-valued modular forms. 
We refer to \cite{Ma2} for the basic theory of vector-valued modular forms, 
but since we will be concerned only with a special type of weights, 
we do not need to recall the full theory in \cite{Ma2}. 

Let $L$ be a lattice of signature $(2, n)$ with $n\geq 3$. 
The Hermitian symmetric domain $\mathcal{D}=\mathcal{D}_{L}$ attached to $L$ 
is defined as a connected component of the space 
\begin{equation*}
\{ \; [\omega]\in \mathbb{P}(L\otimes {\C}) \: | \: (\omega, \omega)=0, \, (\omega, \bar{\omega})>0 \; \}. 
\end{equation*}
For a subgroup $\Gamma <{\SO}^{+}(L)$ of finite index, the quotient 
$\Gamma \backslash \mathcal{D}$ has the structure of a quasi-projective variety of dimension $n$. 
It is known that the smooth locus of $\Gamma \backslash \mathcal{D}$ 
has no nonzero holomorphic $k$-form in the range $0<k<n/2$ (see \cite{Ma2} Chapter 9). 

Let $\mathcal{L}=\mathcal{O}(-1)|_{\mathcal{D}}$ be the Hodge line bundle on $\mathcal{D}$, 
and $\mathcal{E}=\mathcal{L}^{\perp}/\mathcal{L}$ be the second Hodge bundle 
where $\mathcal{L}^{\perp}$ is the orthogonal complement of $\mathcal{L}$ as a sub line bundle of $L\otimes \mathcal{O}_{\mathcal{D}}$. 
Then $\mathcal{L}$ and $\mathcal{E}$ are homogeneous vector bundles associated to 
the standard representations of ${\SO}(2, {\R})$ and ${\SO}(n, {\R})$ respectively. 

Let $k, \ell >0$ be natural numbers with $\ell < n/2$. 
A $\Gamma$-invariant holomorphic section of the automorphic vector bundle 
$\wedge^{\ell}\mathcal{E}\otimes \mathcal{L}^{\otimes k}$ over $\mathcal{D}$ 
is called a \textit{modular form} of weight $(\wedge^{\ell}, k)$ with respect to $\Gamma$. 
We denote by $M_{\wedge^{\ell}, k}(\Gamma)$ the space of such modular forms.

\begin{proposition}\label{prop: MF to hol form}
Let $n/2 < k < n$. 
Let $X$ be a smooth projective model of $\Gamma \backslash \mathcal{D}$. 
Then we have a natural isomorphism 
$H^0(X, \Omega_{X}^{k})\simeq M_{\wedge^{n-k}, k}(\Gamma)$. 
\end{proposition}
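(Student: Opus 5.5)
Our plan has two steps: first identify $\Omega^{k}$ on $\mathcal{D}$ with the automorphic bundle $\wedge^{n-k}\mathcal{E}\otimes\mathcal{L}^{\otimes k}$ up to a twist, then pass to the compactification using the vanishing of obstructions in the range $k>n/2$.

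\emph{Step 1: the bundle on $\mathcal{D}$.} We use the standard isomorphism $\Omega^{1}_{\mathcal{D}}\simeq \mathcal{E}\otimes\mathcal{L}$, which comes from $T_{\mathcal{D}}\simeq \mathcal{H}om(\mathcal{L},\mathcal{L}^{\perp}/\mathcal{L})$ together with the self-duality $\mathcal{E}\simeq\mathcal{E}^{\vee}$ given by the quadratic form. Taking exterior powers gives
\[
\Omega^{k}_{\mathcal{D}}\simeq \wedge^{k}\mathcal{E}\otimes\mathcal{L}^{\otimes k}.
\]
Since $\mathcal{E}$ has rank $n$ and carries an ${\rm SO}(n)$-structure, there is a Hodge-star isomorphism $\wedge^{k}\mathcal{E}\simeq \wedge^{n-k}\mathcal{E}\otimes\det\mathcal{E}$. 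The orthogonal structure trivializes $\det\mathcal{E}$ equivariantly for the ${\rm SO}$-group; this is why the statement uses $\Gamma<{\rm SO}^{+}(L)$. Because $n-k<n/2$, the bundle $\wedge^{n-k}\mathcal{E}\otimes\mathcal{L}^{\otimes k}$ is exactly the weight $(\wedge^{n-k},k)$ appearing in the definition of $M_{\wedge^{n-k},k}(\Gamma)$. Hence $\Gamma$-invariant holomorphic $k$-forms on $\mathcal{D}$ correspond to $\Gamma$-invariant holomorphic sections of this bundle.

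\emph{Step 2: passing to $X$.} We compare $H^{0}(X,\Omega^{k}_{X})$ with holomorphic $k$-forms on the regular part of $\Gamma\backslash\mathcal{D}$ away from the boundary. Restriction gives an injection of $H^{0}(X,\Omega^{k}_{X})$ into $\Gamma$-invariant $k$-forms on $\mathcal{D}$, hence into $M_{\wedge^{n-k},k}(\Gamma)$. For surjectivity, a modular form $F$ gives a $\Gamma$-invariant holomorphic $k$-form on $\mathcal{D}$, and we must show it extends holomorphically to $X$. Independence of the model is automatic, since $H^{0}(\Omega^{k})$ is a birational invariant of smooth projective varieties.

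There are three loci to check:
\begin{enumerate}
\item \emph{Ramification and quotient singularities in the interior.} Pommerening's extension theorem \cite{Po} says that $\Gamma$-invariant forms on a bounded domain extend over a resolution of the finite quotient singularities, at least for degrees $k$ below $n$ or with conditions on reflections. We expect the condition $k>n/2$ to make this apply, via the analysis in \cite{Ma2}.
\item \emph{Codimension $\geq 2$ boundary (the $1$-dimensional cusps).} Extension here follows from Koecher-type or Hartogs arguments.
\item \emph{Toroidal boundary divisors over the $0$-dimensional cusps.} We write the Fourier–Jacobi (Fourier) expansion of $F$ at each cusp. In toroidal coordinates $q_i=e^{2\pi i z_i}$, the forms $dz_i=dq_i/(2\pi i q_i)$ carry logarithmic poles. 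Holomorphicity is then equivalent to vanishing of the constant (Siegel operator) term in the appropriate component. The hard part is to show that for $\ell=n-k<n/2$, $k>n/2$, the boundary restriction $\Phi F$ must vanish automatically. We would use \cite{Ma2}: the image of the Siegel operator lands in the $\wedge^{\ell}$ of a smaller bundle, and weight considerations (the weight $k$ versus the corank) force it to be zero, or force $F$ to be cuspidal. Then each $k$-form has enough vanishing to cancel the $d\log$ poles.
\end{enumerate}

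We expect the main obstacle to be this boundary analysis, namely showing that every modular form of weight $(\wedge^{n-k},k)$ with $k>n/2$ extends over the toroidal boundary without logarithmic poles. We would rely on the corresponding theorem in \cite{Ma2} (the characterization of holomorphic $k$-forms via vector-valued modular forms vanishing at the boundary) and check that its hypotheses hold in the range $n/2<k<n$.
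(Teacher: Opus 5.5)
The gap is in Step 2: you never show that a modular form of weight $(\wedge^{n-k},k)$ extends holomorphically over $X$, and several of the pieces you offer are misassigned or wrong. Step 1 is fine and coincides with the paper's: $\Omega^{k}_{\mathcal{D}}\simeq\wedge^{k}\mathcal{E}\otimes\mathcal{L}^{\otimes k}$, and $\wedge^{k}\mathcal{E}\simeq\wedge^{n-k}\mathcal{E}$ $\Gamma$-equivariantly because $\Gamma<\SO^{+}(L)$. Note that $k>n/2$ is used only there, so that $n-k<n/2$ is an admissible weight; it plays no role in the extension.

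The problems in Step 2 are these.
\begin{itemize}
\item Pommerening's theorem is not a statement about interior quotient singularities. It is precisely the boundary statement: for torsion-free $\Gamma'$, every holomorphic $k$-form with $k<n$ on the smooth variety $\Gamma'\backslash\mathcal{D}$ extends over a smooth projective compactification.
\item Your item (2) is false. In a smooth toroidal compactification, the boundary over a $1$-dimensional cusp is a divisor (a Kuga--Sato-type family over the modular curve), not a codimension $\geq 2$ locus, so Hartogs or Koecher gives nothing there. Codimension $\geq 2$ in the Baily--Borel compactification does not help, because that space is singular at the cusps.
\item Item (3) is left as an expectation, and the mechanism you suggest is not the operative one. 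The constant term at a $0$-dimensional cusp vanishes because it is a constant $k$-form invariant under a Zariski-dense arithmetic subgroup of the Lorentzian Levi. Since $\wedge^{k}$ has no nonzero $\SO(1,n-1)$-invariants for $0<k<n$, it must be zero. The Fourier--Jacobi terms at the $1$-dimensional cusps need a separate argument of the same kind.
\end{itemize}
As written, the surjectivity of $H^{0}(X,\Omega^{k}_{X})\hookrightarrow M_{\wedge^{n-k},k}(\Gamma)$ is not proved.

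The paper avoids all of this by first reducing to the torsion-free case.
\begin{itemize}
\item Choose a torsion-free normal subgroup $\Gamma'\lhd\Gamma$ of finite index. Then $\Gamma'\backslash\mathcal{D}$ is smooth.
\item Step 1 gives $M_{\wedge^{n-k},k}(\Gamma')\simeq H^{0}(\Gamma'\backslash\mathcal{D},\Omega^{k})$.
\item Since $k<n$, Pommerening gives $H^{0}(\Gamma'\backslash\mathcal{D},\Omega^{k})=H^{0}(X',\Omega^{k}_{X'})$ for a smooth projective model $X'$.
\item Take $\Gamma/\Gamma'$-invariants. By definition, $M_{\wedge^{n-k},k}(\Gamma')^{\Gamma/\Gamma'}=M_{\wedge^{n-k},k}(\Gamma)$. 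By the standard fact that holomorphic forms invariant under a finite group descend holomorphically to any smooth model of the quotient, $H^{0}(X',\Omega^{k}_{X'})^{\Gamma/\Gamma'}=H^{0}(X,\Omega^{k}_{X})$.
\end{itemize}
That standard fact holds in every degree, with no Reid--Tai or reflection condition, so the interior quotient singularities and branch divisors cost nothing. This replaces your item (1), and the boundary analysis you attempt in items (2)--(3) is entirely absorbed into Pommerening's theorem applied to $\Gamma'$.
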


\begin{proof}
We first consider the case $\Gamma$ torsion-free. 
Then we have 
\begin{equation*}
\wedge^{n-k}\mathcal{E} \simeq (\wedge^{k}\mathcal{E})^{\vee}\otimes \det \mathcal{E} 
\simeq (\wedge^{k}\mathcal{E})^{\vee} \simeq \wedge^{k}\mathcal{E} 
\end{equation*}
as $\Gamma$-equivariant vector bundles. 
Here we used $\Gamma < {\SO}^{+}(L)$ and the self-duality of $\wedge^{k}$ as an ${\SO}$-representation. 
Then we have 
\begin{equation*}
\Omega_{\mathcal{D}}^{k} \simeq \wedge^{k}\mathcal{E}\otimes \mathcal{L}^{\otimes k} 
\simeq \wedge^{n-k}\mathcal{E}\otimes \mathcal{L}^{\otimes k}. 
\end{equation*}
(See \cite{Ma2} Example 2.3 for the first isomorphism.) 
This shows that 
\begin{equation*}
M_{\wedge^{n-k}, k}(\Gamma) \simeq H^0(\mathcal{D}, \Omega_{\mathcal{D}}^{k})^{\Gamma} 
\simeq H^0(\Gamma \backslash \mathcal{D}, \Omega^{k}). 
\end{equation*}
By the extension theorem of Pommerening \cite{Po}, 
every holomorphic $k$-form on 
$\Gamma \backslash \mathcal{D}$ 
extends holomorphically over a smooth projective compactification $X$ of $\Gamma \backslash \mathcal{D}$. 
Thus we have 
$H^0(\Gamma \backslash \mathcal{D}, \Omega^{k}) = H^0(X, \Omega_{X}^{k})$. 

For general $\Gamma$, we choose a torsion-free normal subgroup $\Gamma' \lhd \Gamma$ of finite index. 
Let $X'$ be a smooth projective model of $\Gamma' \backslash \mathcal{D}$. 
We take the $\Gamma/\Gamma'$-invariant part of the isomorphism 
$H^0(X', \Omega_{X'}^{k})\simeq M_{\wedge^{n-k}, k}(\Gamma')$ 
for $\Gamma'$. 
Then 
$M_{\wedge^{n-k}, k}(\Gamma')^{\Gamma/\Gamma'}=M_{\wedge^{n-k}, k}(\Gamma)$ 
by the definition of modular forms, 
while we have 
$H^0(X', \Omega_{X'}^{k})^{\Gamma/\Gamma'}=H^0(X, \Omega_{X}^{k})$ 
as a well-known property of holomorphic differential forms. 
\end{proof}

\subsection{Automorphic forms on the Lie group}\label{ssec: lift to G} 

We choose a base point $[\omega_{0}]$ of $\mathcal{D}$ 
and let $K_{\infty}^{+}$ be the stabilizer of $[\omega_{0}]$ in ${\SO}^{+}(L\otimes {\R})$. 
Then $K_{\infty}^{+}$ is isomorphic to ${\SO}(n, {\R})\times {\SO}(2, {\R})$. 
We denote by $\frakg =\frakk \oplus \frakp$ the associated Cartan decomposition of the Lie algebra of 
${\SO}^{+}(L\otimes \R)$. 
Let $\mathfrak{p}_{{\C}} = \mathfrak{p}_{+}\oplus \mathfrak{p}_{-}$ 
be the eigendecomposition for the adjoint action of the center $\mathfrak{so}(2, {\R})$ of $\mathfrak{k}$. 
Then $\mathfrak{p}_{-}$ gives the Cauchy-Riemann operator at each point of $\mathcal{D}$, 
while $\mathfrak{p}_{+}$ is identified with the complex tangent space. 

Let $f$ be a modular form of weight $(\wedge^{\ell}, k)$ for $\Gamma$. 
We define a $\Gamma$-invariant function $\tilde{f}$ on ${\SO}^{+}(L\otimes {\R})$ 
as follows (cf.~\cite{Ma2} \S 11.1). 
We first choose an isotropic line $I$ in $L\otimes {\Q}$. 
(This is fixed and subsumed in what follows.) 
The effect of this choice is to trivialize the vector bundle 
$\wedge^{\ell}\mathcal{E}\otimes \mathcal{L}^{\otimes k}$. 
Via this, we can identify $f$ with a vector-valued holomorphic function on $\mathcal{D}$ (denoted again by $f$) 
satisfying the invariance 
\begin{equation}\label{eqn: factor of automorphy}
f(\gamma [\omega]) = j(\gamma, [\omega]) f([\omega]), \qquad  \gamma \in \Gamma, \; [\omega]\in \mathcal{D}, 
\end{equation} 
with respect to the factor of automorphy $j$. 
Here the vector space where $f$ takes values is 
\begin{equation*}
W = (\wedge^{\ell}(I^{\perp}/I)\otimes (I^{\vee})^{\otimes k}) \otimes {\C}, 
\end{equation*}
and the factor of automorphy is a ${\rm GL}(W)$-valued function. 
Then we define a $W$-valued smooth function on ${\SO}^{+}(L\otimes {\R})$ by 
\begin{equation}\label{eqn: lift to G}
\tilde{f}(g) = j(g, [\omega_{0}])^{-1} f(g[\omega_{0}]), 
\qquad g\in {\SO}^{+}(L\otimes {\R}). 
\end{equation}
We regard $W$ as a $K_\infty^{+}$-representation via the trivialization over $[\omega_{0}]$ 
and denote by 
\begin{equation*}
\rho(h)=j(h, [\omega_{0}]), 
\qquad h\in K_\infty^{+}, 
\end{equation*}
this representation. 
This is irreducible with highest weight $(\wedge^{\ell}, k)$, and so isomorphic to 
\begin{equation*}
W_{\ell, k} = \wedge^{\ell}{\C}^n \boxtimes \chi^{k},  
\end{equation*} 
where $\chi^{k}$ is the weight $k$ character of ${\SO}(2, {\R})$.

 \begin{lemma}\label{lem: lift to G}
 The construction $f\mapsto \phi = \tilde{f}$ 
 gives an isomorphism between 
 the space of square-integrable modular forms of weight $(\wedge^{\ell}, k)$ for $\Gamma$ 
 and 
 the space of smooth $W$-valued functions $\phi$ on ${\SO}^{+}(L\otimes {\R})$ such that 
 
(1) $\phi(\gamma g)= \phi(g)$ for $\gamma\in \Gamma$;  
 
(2) $\phi(gh) = \rho(h)^{-1}(\phi(g))$ for $h\in K_\infty^{+}$; 
 
(3) $\mathfrak{p}_{-}\cdot \phi =0$; 
 
(4) $\phi$ is square-integrable over $\Gamma\backslash {\SO}^{+}(L\otimes {\R})$. 
\end{lemma}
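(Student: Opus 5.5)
The plan is to verify directly that $f\mapsto\tilde f$ carries each defining property of a modular form to the corresponding condition (1)--(4), and then to construct the inverse map explicitly. The one algebraic input is the cocycle relation $j(g_1g_2,[\omega])=j(g_1,g_2[\omega])\,j(g_2,[\omega])$, which holds for the factor of automorphy on all of ${\SO}^{+}(L\otimes\R)$ because $j$ comes from the action on the trivialized bundle $\wedge^{\ell}\mathcal{E}\otimes\mathcal{L}^{\otimes k}$.

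The forward direction checks the conditions one at a time.
\begin{enumerate}
\item For (1), substitute the cocycle relation and \eqref{eqn: factor of automorphy}:
\begin{equation*}
\tilde f(\gamma g)=j(g,[\omega_0])^{-1}j(\gamma,g[\omega_0])^{-1}f(\gamma g[\omega_0])=\tilde f(g).
\end{equation*}
\item For (2), use that $h\in K_\infty^{+}$ fixes $[\omega_0]$, so $j(gh,[\omega_0])=j(g,[\omega_0])\rho(h)$. This gives $\tilde f(gh)=\rho(h)^{-1}\tilde f(g)$.
\item Condition (4) is the translation of square-integrability. The Petersson norm $\int_{\Gamma\backslash\mathcal{D}}\|f\|^2_{\rm Hodge}\,d\mu$ uses the invariant Hermitian metric on the automorphic bundle. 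Under the trivialization this metric equals $|j(g,[\omega_0])^{-1}(\cdot)|$ with respect to a $K_\infty^{+}$-invariant norm on $W$. Hence $\|\tilde f(g)\|$ is the pullback of the pointwise Petersson norm of $f$. Since $K_\infty^{+}$ is compact, the integral over $\Gamma\backslash{\SO}^{+}(L\otimes\R)$ is a constant multiple of the integral over $\Gamma\backslash\mathcal{D}$.
\end{enumerate}

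The main point is (3), namely that holomorphy of $f$ is equivalent to $\mathfrak{p}_{-}\cdot\tilde f=0$. I would use the standard Harish-Chandra realization. Realize $\mathcal{D}$ as a bounded domain (or a tube domain via the chosen $I$) inside $\mathfrak{p}_+$, using $P_+K_{\C}P_-$. There $j(g,[\omega])$ is the $K_{\C}$-component of $g\exp(z)$, composed with $\rho$, for the point $z$ representing $[\omega]$. The natural object to study is $F(g)=f(g[\omega_0])$ extended to $G_\C$-local coordinates. For $X\in\mathfrak{p}_-$ one computes the right derivative
\begin{equation*}
(R_X\tilde f)(g)=\frac{d}{dt}\Big|_{t=0}\, j(g\exp tX,[\omega_0])^{-1}f(g\exp tX[\omega_0]).
\end{equation*}
On the right-hand side, the map $g\mapsto j(g,[\omega_0])^{-1}$ is annihilated by right $\mathfrak{p}_-$-derivatives, because $j(g,\cdot)$ is the $K_\C$-part and $\mathfrak{p}_-$ lies in the stabilizer of the base point in $G_\C$. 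Meanwhile the right $\mathfrak{p}_-$-derivatives of $g\mapsto f(g[\omega_0])$ are exactly the $\bar\partial$-derivatives of $f$ at $g[\omega_0]$, since $\mathfrak{p}_-$ gives the Cauchy--Riemann operator. Thus $R_{\mathfrak{p}_-}\tilde f=0$ holds if and only if $\bar\partial f=0$. This identification is the step I expect to need the most care. If a cleaner route is wanted, I would reduce it to the corresponding statement already in \cite{Ma2} \S 11.1 or the classical Borel/Harish-Chandra treatment.

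For the inverse, start from $\phi$ satisfying (1)--(4). By (2), $g\mapsto j(g,[\omega_0])\phi(g)$ is right $K_\infty^{+}$-invariant, so it descends to a function $f$ on $\mathcal{D}={\SO}^{+}(L\otimes\R)/K_\infty^{+}$. The arguments above then run in reverse:
\begin{itemize}
\item (3) makes $f$ holomorphic;
\item (1) together with the cocycle relation gives \eqref{eqn: factor of automorphy};
\item (4) makes $f$ square-integrable.
\end{itemize}
Both constructions are evidently linear and mutually inverse, which gives the isomorphism.
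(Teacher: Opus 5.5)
Your proposal follows the paper's proof: the paper cites \cite{Ma2} Claim 11.3 for the forward direction, which you verify directly via the cocycle relation, the identity $j(gh,[\omega_0])=j(g,[\omega_0])\rho(h)$, and the vanishing of right $\mathfrak{p}_{-}$-derivatives of $g\mapsto j(g,[\omega_0])$; it then builds the inverse exactly as you do, by $f(g[\omega_0])=j(g,[\omega_0])\phi(g)$, which is well-defined by (2). One small imprecision is that the paper's $j$ comes from the trivialization by $I$, not from the Harish-Chandra $K_{\C}$-component, but the two differ by a holomorphic change of trivialization, so your $\mathfrak{p}_{-}$-argument for (3) goes through unchanged.
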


\begin{proof}
The fact that the lifting $\tilde{f}$ defined by \eqref{eqn: factor of automorphy} 
satisfies (1) -- (4) is verified in \cite{Ma2} Claim 11.3. 
Conversely, given a $W$-valued function $\phi$ on 
${\SO}^{+}(L\otimes \R)$ satisfying (1) -- (4), 
we can define a $W$-valued function $f$ on $\mathcal{D}$ by 
$f(g[\omega_{0}]) =  j(g, [\omega_{0}])\phi(g)$. 
This is well-defined by (2). 
The property (3) is translated back to the holomorphicity of $f$, 
the property (1) to the $\Gamma$-invariance of $f$ in the sense of \eqref{eqn: factor of automorphy}, 
and the property (4) to the square-integrability of $f$. 
\end{proof}

\subsection{Lowest weight modules}\label{ssec: LWM}

In this subsection,  following \cite{Ma2} \S 11.2,  
we construct a lowest weight module from a modular form and vice versa. 
Let $0< \ell <n/2$ and $k>0$. 
A $(\mathfrak{g}, K_{\infty}^{+})$-module $M$ is called a 
\textit{lowest weight module} of weight 
$(\wedge^{\ell}, -k)$ 
if it is generated from a weight vector $v_0$ of weight 
\begin{equation}\label{eqn: lowest weight I}
(\wedge^{\ell}, k)^{\vee} = (\wedge^{\ell}, -k) 
= (\underbrace{1,\cdots, 1}_{\ell}, \underbrace{0,\cdots,0}_{[n/2]-\ell}, -k) 
\end{equation}
which is annihilated by $\mathfrak{p}_{-}$ and the positive root vectors of $\mathfrak{k}$. 
This is a highest weight module 
of weight $(\wedge^{\ell}, -k)$
in the sense of \cite{Hu} Chapter 1 
if we declare the (noncompact) roots for $\mathfrak{p}_{-}$ 
to be positive. 
The terminology ``lowest'' comes from the switch of the role of $\mathfrak{p}_{+}$ and $\mathfrak{p}_{-}$ 
(cf.~\eqref{eqn: root p+}). 
The $K_{\infty}^{+}$-representation $W_0$ generated by $v_0$ has 
highest weight \eqref{eqn: lowest weight I}, 
and hence is isomorphic to $W_{\ell, k}^{\vee}$.  
The module $M$ is generated from $W_0$ by the action of the universal enveloping algebra of $\frakp_+$. 

In general, the lowest weight module $M$ 
lies between the parabolic Verma module of weight 
$(\wedge^{\ell}, -k)$ (in the sense of \cite{Hu} \S 9.4) 
and its unique irreducible quotient. 
We denote the latter as $L(\wedge^{\ell}, -k)$.  
When $\ell+k>n$, the parabolic Verma module 
is already irreducible (\cite{EHW_83}), 
so we have $M=L(\wedge^{\ell}, -k)$. 
(The case we are interested in is $\ell+k=n$, 
but see Remark \ref{rmk: irr LWM}.) 

We go back to modular forms. 
Let $f$ be a square-integrable modular form of weight $(\wedge^{\ell}, k)$ 
and $\tilde{f}$ be its lift defined in \eqref{eqn: lift to G}. 
We choose a linear function $\alpha \colon W\to {\C}$, 
and let $V_{f}\subset L^2(\Gamma\backslash {\SO}^+(L\otimes {\R}))$ 
be the Hilbert subspace generated by the right translations of 
the function $\alpha \circ \tilde{f}$. 
This is independent of the choice of $\alpha$. 
Let $(V_{f})_{K_\infty^{+}}$ be 
the $K_\infty^{+}$-finite part of $V_{f}$. 

\begin{proposition}\label{prop: auto form to LWM}
The $(\mathfrak{g}, K_\infty^{+})$-module $(V_{f})_{K_\infty^{+}}$ 
is a lowest weight module of weight $(\wedge^{\ell}, -k)$. 
Conversely, if the $K_{\infty}^{+}$-finite part of 
$L^2(\Gamma\backslash {\SO}^+(L\otimes {\R}))$ 
contains a lowest weight module $M$ 
of weight $(\wedge^{\ell}, -k)$, 
then $M=(V_{f})_{K_\infty^{+}}$ for some square-integrable $\Gamma$-modular form $f$ 
of weight $(\wedge^{\ell}, k)$. 
\end{proposition}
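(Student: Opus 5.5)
The plan is to use Lemma \ref{lem: lift to G} as a dictionary in both directions. In each direction we translate between $W$-valued functions $\phi$ satisfying (1)–(4) and scalar functions in $L^2(\Gamma\backslash {\SO}^+(L\otimes\R))$ that transform under $K_\infty^+$ according to $W^\vee \simeq W_{\ell,k}^\vee$.

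\emph{Forward direction.} Let $\phi=\tilde f$. For $\alpha\in W^\vee$ put $\phi_\alpha=\alpha\circ\tilde f$. By (2) we have $\phi_\alpha(gh)=(\rho(h)^{\vee}\alpha)(\tilde f(g))$ up to the obvious inverse, so right translation by $h\in K_\infty^+$ sends $\phi_\alpha$ to $\phi_{\rho^\vee(h)\alpha}$. Hence the span $W_0=\{\phi_\alpha\}$ is a $K_\infty^+$-subrepresentation isomorphic to $W^\vee\simeq W_{\ell,k}^\vee$, with highest weight $(\wedge^\ell,-k)$. Condition (3), read through the right-regular action, says $\mathfrak p_-$ kills every $\phi_\alpha$. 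Thus the highest weight vector $v_0\in W_0$ is annihilated by $\mathfrak p_-$ and by the positive root vectors of $\mathfrak k$. Next, $V_f$ is by definition the closed span of right translates of one $\phi_\alpha$, and $W_0\subset V_f$ (it is the $K$-span of $\phi_\alpha$, and $W^\vee$ is irreducible, so every $\alpha$ gives the same space). It remains to show $(V_f)_{K_\infty^+}$ is generated by $v_0$, i.e. equals $U(\mathfrak p_+)W_0$. Set $M=U(\mathfrak g)W_0=U(\mathfrak p_+)W_0$, using $U(\mathfrak g)=U(\mathfrak p_+)U(\mathfrak k)U(\mathfrak p_-)$. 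Then $M$ consists of smooth $K$-finite vectors in $V_f$. The closure of $M$ is $G$-invariant: since $\tilde f$ is real-analytic, $\mathfrak g$-stability together with analyticity of matrix coefficients gives $G$-stability. So $\overline M=V_f$. Finally, $M$ is admissible, because each $K$-type occurs in $U(\mathfrak p_+)W_0$ with finite multiplicity (the $\mathfrak{so}(2)$-weight grading is bounded in each degree). A standard argument then shows that the $K$-finite part of the closure of such an admissible $(\mathfrak g,K)$-module is the module itself: $K$-isotypic projections of $\overline M$ land in the finite-dimensional closed pieces of $M$. Hence $(V_f)_{K}=M$, a lowest weight module.

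\emph{Converse.} Given $M\subset L^2(\Gamma\backslash G)_{K}$ generated by $v_0$, let $W_0=U(\mathfrak k)v_0\simeq W_{\ell,k}^\vee$ and choose an isomorphism $\iota\colon W^\vee\to W_0$. Define a $W$-valued function $\phi$ by requiring $\alpha(\phi(g))=\iota(\alpha)(g)$. Then $\phi$ is left $\Gamma$-invariant and square-integrable, which gives (1) and (4). It satisfies (2) by $K$-equivariance of $\iota$, and (3) holds because $\mathfrak p_-$ kills $v_0$ and $[\mathfrak k,\mathfrak p_-]\subset\mathfrak p_-$, so $\mathfrak p_-$ kills all of $W_0$. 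Smoothness follows from $K$-finiteness together with $\mathcal Z(\mathfrak g)$-finiteness, since $M$ has an infinitesimal character and is therefore annihilated by an elliptic operator such as the Casimir plus $2\,\Delta_K$. Lemma \ref{lem: lift to G} now produces a square-integrable $f$ with $\tilde f=\phi$. By the forward direction, $(V_f)_K=U(\mathfrak p_+)W_0=M$.

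The main obstacle is the identification $(V_f)_{K_\infty^+}=U(\mathfrak p_+)W_0$ in the forward direction. This requires care about closures of non-irreducible modules, since $M$ may sit strictly between the parabolic Verma module and $L(\wedge^\ell,-k)$. The route I would take is to note that the $K$-types of $U(\mathfrak p_+)W_0$ have $\mathfrak{so}(2)$-weights in $-k-\Z_{\ge0}$, each with finite multiplicity, and to combine this with the density argument above.
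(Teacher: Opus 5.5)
Your proposal is correct and follows essentially the same route as the paper. In the forward direction you use property (2) to identify the $K_\infty^{+}$-span of $\alpha\circ\tilde f$ with $W^\vee$ and property (3) for annihilation by $\mathfrak{p}_-$; in the converse you build the $W$-valued function from a $K_\infty^{+}$-map $W^\vee\to L^2$ and invoke Lemma \ref{lem: lift to G}, exactly as the paper does. Your extra details — admissibility of $U(\mathfrak{p}_+)W_0$, the closure argument giving $(V_f)_{K_\infty^{+}}=U(\mathfrak{p}_+)W_0$, and $[\mathfrak{k},\mathfrak{p}_-]\subset\mathfrak{p}_-$ — fill in what the paper delegates to \cite{Ma2}. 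One fix: the $G$-invariance of $\overline{M}$ should rest on the fact that $K$-finite, $\mathcal{Z}(\mathfrak{g})$-finite vectors are analytic vectors (Harish-Chandra), not merely on real-analyticity of $\tilde f$ as a function.
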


\begin{proof}
The first assertion is essentially verified in the proof of Proposition 11.4 in \cite{Ma2}. 
The point is that the $K_\infty^{+}$-representation 
generated by $\alpha \circ \tilde{f}$ 
is isomorphic to $V_{\ell, k}^{\vee}$ by the property (2) in Lemma \ref{lem: lift to G}, 
and the property (3) assures that $(V_{f})_{K_\infty^{+}}$ is a lowest weight module.  

For the second assertion, we take a lowest weight vector $v_{0}$ of $M$. 
Thus $\mathfrak{p}_{-}\cdot v_{0}=0$ and 
$v_{0}$ is a highest weight vector for the $K_\infty^{+}$-action with weight $(\wedge^{\ell}, k)^{\vee}$. 
This vector $v_{0}$ can be extended to a 
$K_\infty^{+}$-homomorphism 
\begin{equation*}
\Phi \colon W^{\vee}\to 
L^2(\Gamma \backslash {\SO}^+(L\otimes \R)) 
\end{equation*}
by sending a highest weight vector of 
$W^{\vee}$ to $v_{0}$. 
Let $\phi$ be the $W$-valued function on 
$\Gamma \backslash {\SO}^+(L\otimes \R)$ 
corresponding to $\Phi$. 
Then $\mathfrak{p}_{-}\cdot v_0 = 0$ implies  $\mathfrak{p}_{-}\cdot \phi = 0$. 
The $K_\infty^{+}$-equivariance of $\Phi$ implies 
the $K_{\infty}^{+}$-invariance of $\phi$, namely 
$\rho(h)(\phi(g \cdot h)) = \phi(g)$ for $h\in K_\infty^{+}$. 
Thus $\phi$ satisfies the properties in Lemma \ref{lem: lift to G}, 
and hence $\phi=\tilde{f}$ for some 
square-integrable modular form $f$. 
Since the above procedure is the inverse of $\tilde{f}\mapsto (V_{f})_{K_\infty^{+}}$, 
we see that $M=(V_f)_{K_\infty^{+}}$. 
\end{proof}

Proposition \ref{prop: hol form via auto rep} now follows from 
Proposition \ref{prop: MF to hol form} and 
Proposition \ref{prop: auto form to LWM}. 

\begin{remark}\label{rmk: irr LWM}
    In fact, 
    $(V_f)_{K_{\infty}^{+}}$ is irreducible even when $l+k\leq n$. 
    This follows from the finiteness of length of highest weight modules 
    (\cite{Hu} \S 1.11) and the semisimplicity of 
    $(V_f)_{K_{\infty}^{+}}$, 
    being a submodule of $L^2(\Gamma\backslash {\SO}^+(L\otimes {\R}))$. 
    In particular, $(V_f)_{K_{\infty}^{+}}$ is contained in the discrete part of 
    the $L^2$-space. 
    This information will be used only in Corollary \ref{cor: exact isom}. 
\end{remark}

\section{Adelization}\label{sec: adelize}

In this section, referring to the following subsections for the terminology, 
we prove the following. 

\begin{proposition}\label{prop: auto rep to MF}
    Let $L$ be a lattice of signature $(2, n)$ with $n\geq 3$ 
    and of class number $1$ with 
    ${\SO}^{+}(L)\ne {\SO}(L)\ne {\rm O}(L)$. 
    Let $n/2<k<n$. 
    Suppose that we have a discrete automorphic representation 
    $\pi=\otimes_{p}\pi_{p}$ of ${\SO}(L\otimes{\A})$ with the following conditions: 

    (1) $\pi_{\infty}\simeq L(\wedge^{n-k}, -k)$. 

    (2) $(\pi_{p})^{K_p}\ne 0$ for every prime 
    $p<\infty$ where 
    $K_p={\SO}(L\otimes {\Zp})$. 

\noindent
Then a smooth projective model of ${\SO}^+(L)\backslash \mathcal{D}$ has a nonzero 
holomorphic $k$-form. 
\end{proposition}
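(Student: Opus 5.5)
The plan is to reduce to Proposition \ref{prop: hol form via auto rep} with $\Gamma={\SO}^{+}(L)$, via strong approximation and the class number one assumption. Set $G={\SO}(L\otimes{\Q})$ and $K_f=\prod_{p<\infty}K_p$. The first step is the adelic decomposition. Class number $1$ gives $G(\A)=G(\Q)\,G(\R)K_f$. Then $G(\Q)\cap G(\R)K_f={\SO}(L)$, so $G(\Q)\backslash G(\A)/K_f\simeq {\SO}(L)\backslash {\SO}(L\otimes\R)$. Next I would pass from ${\SO}(L\otimes\R)$, which has two connected components, to ${\SO}^{+}(L\otimes\R)$. Since ${\SO}^{+}(L)\ne{\SO}(L)$, there is an element of ${\SO}(L)$ lying in the non-identity component. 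Hence
\[
{\SO}(L)\backslash {\SO}(L\otimes\R)\simeq {\SO}^{+}(L)\backslash {\SO}^{+}(L\otimes\R).
\]
This is where the hypothesis ${\SO}^{+}(L)\ne{\SO}(L)$ is used. The hypothesis ${\SO}(L)\ne{\rm O}(L)$ will be needed only at the very end, to compare ${\SO}^{+}(L)\backslash\mathcal{D}$ with the variety in question. The identification is compatible with the measures and with the right action of ${\SO}^{+}(L\otimes\R)$, so it gives an isometric embedding
\[
L^{2}(G(\Q)\backslash G(\A))^{K_f}\hookrightarrow L^{2}({\SO}^{+}(L)\backslash{\SO}^{+}(L\otimes\R))
\]
that is ${\SO}^{+}(L\otimes\R)$-equivariant. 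In fact it is an isomorphism.

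The second step is to produce the lowest weight module. Since $\pi$ is discrete automorphic, it occurs as a subspace of $L^{2}_{\rm disc}(G(\Q)\backslash G(\A))$. By (2), the space $\otimes_{p<\infty}\pi_p^{K_p}$ is nonzero. Fix a nonzero vector $v_f$ in it. The map $v_\infty\mapsto v_\infty\otimes v_f$ embeds $\pi_\infty$ into $L^{2}(G(\Q)\backslash G(\A))^{K_f}$ equivariantly for $G(\R)$. Restricting to ${\SO}^{+}(L\otimes\R)$ and taking $K_\infty^{+}$-finite vectors, we get a $(\frakg,K_\infty^{+})$-submodule of the $K_\infty^{+}$-finite part of $L^{2}({\SO}^{+}(L)\backslash{\SO}^{+}(L\otimes\R))$.

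The third step is to check that this submodule is a lowest weight module of weight $(\wedge^{n-k},-k)$. The point needing care is that $\pi_\infty$ is a representation of the disconnected group ${\SO}(L\otimes\R)$, while $L(\wedge^{n-k},-k)$ is a $(\frakg,K_\infty^{+})$-module. I would interpret (1) as saying that the restriction of $\pi_\infty$ to the identity component contains $L(\wedge^{n-k},-k)$. This should be consistent with how the paper will normalize the archimedean component in \S\ref{sec: AJ}. Its image is then a copy of $L(\wedge^{n-k},-k)$, which is generated by a lowest weight vector as required.

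Proposition \ref{prop: hol form via auto rep} now applies with $\Gamma={\SO}^{+}(L)$ and $\ell=n-k<n/2$. It gives a nonzero holomorphic $k$-form on a smooth projective model of ${\SO}^{+}(L)\backslash\mathcal{D}$, which is what the statement claims.

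I expect the main obstacle to be the first step. Specifically, the identification $G(\Q)\cap G(\R)K_f={\SO}(L)$, and the fact that class number one (for ${\SO}$ rather than ${\rm O}$) gives the single-coset decomposition, need a careful formulation of ``class number $1$''. If only the ${\rm O}$-genus has one class, I would fall back on spinor norm arguments, using the elements of ${\rm O}(L)\setminus{\SO}(L)$ to move between ${\SO}$-classes.
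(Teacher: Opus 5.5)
Your proposal is correct and follows the paper's route. The paper proves $\SO(L\otimes\Af)=\SO(L\otimes\Q)\cdot K_{\rm fin}$ (Lemma~\ref{lem: class number 1}), then deduces $\SO^+(L)\backslash\SO^+(L\otimes\R)\simeq\SO(L\otimes\Q)\backslash\SO(L\otimes\A)/K_{\rm fin}$ from $\SO(L\otimes\Q)\cap K_{\rm fin}=\SO(L)$ and $\SO^+(L)\ne\SO(L)$, and finally embeds $\pi_\infty$ by tensoring with $K_p$-fixed vectors, exactly as you do. One correction: the hypothesis $\SO(L)\ne{\rm O}(L)$ is not used at the end, since the variety in the statement is already $\SO^+(L)\backslash\mathcal{D}$. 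It is used in the class-number step you flag: class number one, applied to $L'=L\otimes\Q\cap g(L\otimes\prod_p\Zp)$, gives $\gamma\in{\rm O}(L\otimes\Q)$ with $\gamma L'=L$, and if $\det\gamma=-1$ you compose with an element of ${\rm O}(L)\setminus\SO(L)$, so no spinor norm argument is needed.
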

 
In \S \ref{sec: A-packet}, we will construct an automorphic representation 
satisfying these conditions for some special lattices 
(specified in the next \S \ref{sec: lattice}). 

\subsection{Adeles}

Let $L$ be a lattice of signature $(2, n)$ 
as in \S \ref{sec: auto rep to hol form}. 
We denote by $K_{\infty}$ 
the maximal compact subgroup of $\SO(L\otimes \R)$ 
that contains $K_{\infty}^{+}\subset \SO^+(L\otimes \R)$ 
(with index $2$). 
If we write $K_{\infty}=K_{\infty}^{+}\sqcup K_{\infty}^{-}$, 
then $K_{\infty}^{-}$ exchanges $\frakp_{+}$ and $\frakp_{-}$. 
For a prime $p<\infty$ we put 
$K_p :=  {\SO}(L\otimes {\Zp})$. 
This is an open compact subgroup of ${\SO}(L\otimes{\Qp})$. 
For almost all $p$, $K_p$ is furthermore maximal compact. 
We put $K_{{\rm fin}}:=\prod_{p<\infty}K_p$ and 
$K:= K_{{\rm fin}}\times K_{\infty}$. 
The adelic points of ${\SO}(L)$ are defined as the restricted direct product of 
the groups ${\SO}(L\otimes {\Qp})$ with respect to the subgroups $K_p$: 
$$
{\SO}(L\otimes {\Af}) := \prod_{p<\infty} {\SO}(L\otimes {\Qp}), 
$$
$$ 
{\SO}(L\otimes {\A}) := {\SO}(L\otimes {\Af})\times {\SO}(L\otimes {\R}). 
$$
Via the diagonal embedding, 
${\SO}(L\otimes {\Q})$ is a discrete subgroup of ${\SO}(L\otimes {\A})$. 
Its image in ${\SO}(L\otimes {\Af})$ is dense. 

We say that the lattice $L$ has class number $1$ if 
any lattice $L'$ of the same signature as $L$ and satisfying 
$L'\otimes {\Zp} \simeq L\otimes {\Zp}$ for every $p<\infty$ 
is isometric to $L$. 
Since $L$ is indefinite, this condition is often satisfied. 

\begin{lemma}\label{lem: class number 1}
    If $L$ has class number $1$ and ${\SO}(L)\ne {\rm O}(L)$, 
    then $${\SO}(L\otimes {\Af})={\SO}(L\otimes {\Q}) \cdot K_{{\rm fin}}.$$ 
\end{lemma}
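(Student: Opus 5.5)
We deduce the double coset statement from class number $1$ by the usual dictionary between lattices in the genus and cosets in the adelic group, and then use an element of determinant $-1$ to pass from ${\rm O}$ to ${\SO}$.

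\emph{Step 1: the ${\rm O}$-version.} Let $g=(g_p)_p\in {\SO}(L\otimes\Af)$. Since $g_p\in K_p$ for almost all $p$, there is a unique lattice $L'\subset L\otimes\Q$ with $L'\otimes\Zp=g_p(L\otimes\Zp)$ for every $p$. By construction $L'\otimes\Zp\simeq L\otimes\Zp$ for all $p$, and $L'$ has the same signature as $L$ because $L'\otimes\R=L\otimes\R$. Class number $1$ then gives an isometry $L'\simeq L$. Both lattices span the same rational quadratic space, so this isometry extends to some $\gamma\in{\rm O}(L\otimes\Q)$ with $\gamma(L)=L'$. It follows that $\gamma^{-1}g_p$ preserves $L\otimes\Zp$ for every $p$, that is, $\gamma^{-1}g\in\prod_p{\rm O}(L\otimes\Zp)$.

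\emph{Step 2: adjusting the determinant.} The obstacle is that $\gamma$ may have determinant $-1$. This is where we use ${\SO}(L)\neq{\rm O}(L)$: choose $\delta\in{\rm O}(L)$ with $\det\delta=-1$. If $\det\gamma=-1$, replace $\gamma$ by $\gamma\delta$. This is legitimate because $\gamma\delta(L)=\gamma(L)=L'$ still holds, so Step 1 goes through with the new $\gamma$. Hence we may assume $\gamma\in{\SO}(L\otimes\Q)$.

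\emph{Step 3: conclusion.} Now $k:=\gamma^{-1}g$ has $p$-component in ${\rm O}(L\otimes\Zp)$ and, since $\det g_p=1$ and $\det\gamma=1$, determinant $1$. So its $p$-component lies in ${\SO}(L\otimes\Zp)=K_p$, giving $k\in K_{\rm fin}$ and $g=\gamma k\in{\SO}(L\otimes\Q)\cdot K_{\rm fin}$. The reverse inclusion is trivial.

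The only delicate point is Step 2. There one has to notice that the isometry obtained from class number $1$ is determined only up to ${\rm O}(L)$, and that an element of ${\rm O}(L)$ of determinant $-1$ is exactly what lets the adelic ${\SO}$-statement follow from the ${\rm O}$-level class number. Steps 1 and 3 are routine local–global bookkeeping.
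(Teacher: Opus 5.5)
Your proposal is correct and follows the paper's proof essentially step for step. Both arguments build the lattice $L'$ with $L'\otimes\Zp=g_p(L\otimes\Zp)$, use class number $1$ to obtain a rational isometry between $L$ and $L'$, and use an element of ${\rm O}(L)$ of determinant $-1$ to place that isometry in ${\SO}(L\otimes\Q)$. The only cosmetic difference is that you take $\gamma(L)=L'$ and right-multiply by $\delta$, whereas the paper takes $\gamma L'=L$ and adjusts on the other side.
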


\begin{proof}
    Let $g=(g_p)_p$ be an element of ${\SO}(L\otimes {\Af})$. 
    Then $$L':=L\otimes {\Q} \cap g \bigl( L\otimes \prod_{p}{\Zp} \bigr)$$ 
    is a lattice on $L\otimes {\Q}$ (see \cite{Ge} Theorem 9.4). 
    By construction, the ${\Zp}$-lattice 
    $L'\otimes{\Zp}=g_p(L\otimes{\Zp})$ is isometric to $L\otimes {\Zp}$ 
    for each $p<\infty$. 
    By the assumption of class number $1$, 
    there exists an isometry $\gamma$ of $L\otimes{\Q}$ 
    such that $\gamma L' = L$. 
    By the assumption ${\SO}(L)\ne {\rm O}(L)$, 
    we may choose $\gamma$ from ${\SO}(L\otimes{\Q})$. 
    Then the element $\gamma \circ g$ of ${\SO}(L\otimes {\Af})$ preserves 
    $L\otimes (\prod_p{\Zp})$ and hence is contained in $K_{{\rm fin}}$. 
    This implies that $g$ is contained in ${\SO}(L\otimes {\Q}) \cdot K_{{\rm fin}}$. 
\end{proof}

\begin{corollary}\label{cor: class number 1}
    Suppose that $L$ has class number $1$ and that 
    ${\SO}^+(L)\ne {\SO}(L)\ne {\rm O}(L)$. 
    Then the natural embedding 
    ${\SO}(L\otimes {\R})\hookrightarrow {\SO}(L\otimes {\A})$ induces a homeomorphism 
    $$
    {\SO}^+(L) \backslash {\SO}^{+}(L\otimes {\R}) \simeq 
    {\SO}(L\otimes {\Q}) \backslash {\SO}(L\otimes {\A}) / K_{{\rm fin}}. 
    $$
\end{corollary}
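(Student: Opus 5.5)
The plan is to start from Lemma \ref{lem: class number 1}, which gives ${\SO}(L\otimes{\Af})={\SO}(L\otimes{\Q})\cdot K_{{\rm fin}}$. Consequently every element of ${\SO}(L\otimes{\A})$ can be written as $\gamma\cdot(k_f, g_\infty)$ with $\gamma\in{\SO}(L\otimes{\Q})$, $k_f\in K_{{\rm fin}}$ and $g_\infty\in{\SO}(L\otimes\R)$. Hence the map
\[
{\SO}(L\otimes\R)\to {\SO}(L\otimes{\Q})\backslash{\SO}(L\otimes{\A})/K_{{\rm fin}}
\]
is surjective. Two points $g_\infty, g'_\infty$ have the same image exactly when $g'_\infty=\gamma g_\infty$ for some $\gamma\in{\SO}(L\otimes{\Q})$ whose finite component lies in $K_{{\rm fin}}$. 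Such a $\gamma$ preserves $L\otimes\Zp$ for all $p$, and therefore preserves $L=L\otimes\Q\cap\prod_p L\otimes\Zp$; that is, $\gamma\in{\SO}(L)$. This yields a continuous bijection
\[
{\SO}(L)\backslash{\SO}(L\otimes\R)\to {\SO}(L\otimes{\Q})\backslash{\SO}(L\otimes{\A})/K_{{\rm fin}}.
\]

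Next I pass from ${\SO}(L\otimes\R)$ to its identity component. The group ${\SO}(L\otimes\R)$ has two components, ${\SO}^{+}(L\otimes\R)$ and its complement, and ${\SO}^+(L)={\SO}(L)\cap{\SO}^{+}(L\otimes\R)$. The hypothesis ${\SO}^+(L)\neq{\SO}(L)$ provides an element $\gamma_0\in{\SO}(L)$ lying in the non-identity component. Every ${\SO}(L)$-orbit therefore meets ${\SO}^{+}(L\otimes\R)$. Moreover, two points of ${\SO}^{+}(L\otimes\R)$ in the same ${\SO}(L)$-orbit are related by an element of ${\SO}(L)$ preserving the component, i.e.\ by an element of ${\SO}^+(L)$. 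So ${\SO}^+(L)\backslash{\SO}^{+}(L\otimes\R)\to{\SO}(L)\backslash{\SO}(L\otimes\R)$ is a continuous bijection. It is a homeomorphism because ${\SO}^{+}(L\otimes\R)$ is open and closed, so the inclusion descends to an open map.

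The last step is to show that the composite continuous bijection is a homeomorphism, which amounts to openness. The map ${\SO}(L\otimes\R)\times K_{{\rm fin}}\to{\SO}(L\otimes\A)$ is an open embedding, since $K_{{\rm fin}}$ is open in ${\SO}(L\otimes\Af)$. An open set $U\subset{\SO}^{+}(L\otimes\R)$ that is ${\SO}^+(L)$-saturated maps onto the image of the open set ${\SO}(L\otimes\Q)\cdot(U\times K_{{\rm fin}})$, and this image is open in the double coset space, which carries the quotient topology.

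The only delicate point I expect is the identification of the stabilizer, namely that $\gamma\in{\SO}(L\otimes\Q)$ with $\gamma\in K_p$ for all $p$ lies in ${\SO}(L)$. This follows from the local–global description of lattices used in Lemma \ref{lem: class number 1} (\cite{Ge} Theorem 9.4). The remaining steps are formal topology.
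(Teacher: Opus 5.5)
Your proposal is correct and follows the same route as the paper. You use Lemma~\ref{lem: class number 1} to obtain $({\SO}(L\otimes\Q)\cap K_{{\rm fin}})\backslash{\SO}(L\otimes\R)\simeq{\SO}(L\otimes\Q)\backslash{\SO}(L\otimes\A)/K_{{\rm fin}}$, identify the intersection with ${\SO}(L)$, and then use an element of ${\SO}(L)\setminus{\SO}^+(L)$ to pass to the identity component. The only difference is that you verify the first homeomorphism (surjectivity, the stabilizer computation, openness) by hand, whereas the paper cites \cite{GH} Equation (2.23) for it.
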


\begin{proof}
By substituting Lemma \ref{lem: class number 1} into \cite{GH} Equation (2.23), 
we see that the embedding ${\SO}(L\otimes {\R})\hookrightarrow {\SO}(L\otimes {\A})$ induces 
$$
({\SO}(L\otimes {\Q})\cap K_{{\rm fin}}) \backslash {\SO}(L\otimes {\R}) \simeq 
{\SO}(L\otimes {\Q}) \backslash {\SO}(L\otimes {\A}) / K_{{\rm fin}}. 
$$
We have ${\SO}(L\otimes {\Q})\cap K_{{\rm fin}}={\SO}(L)$ and 
$${\SO}(L)\backslash {\SO}(L\otimes {\R}) = {\SO}^+(L)\backslash {\SO}^{+}(L\otimes {\R})$$ 
by our assumption ${\SO}^+(L)\ne {\SO}(L)$. 
\end{proof}

\subsection{Automorphic representations}

Let $\pi$ be a discrete automorphic representation of 
${\SO}(L\otimes {\A})$ in the sense of \cite{GH} \S 3.7 and \S 6.6. 
This means that $\pi$ is an irreducible 
unitary representation of ${\SO}(L\otimes {\A})$ 
which is equivalent to a sub representation of 
$L^2({\SO}(L\otimes {\Q})\backslash {\SO}(L\otimes {\A}))$. 
Passing to the $K$-finite part, 
this decomposes into a restricted tensor product 
$\pi=\otimes_{p}\pi_{p}$, 
where $\pi_{p}$ for $p<\infty$ is an irreducible admissible representation of ${\SO}(L\otimes {\Qp})$ 
and $\pi_{\infty}$ is an irreducible $(\mathfrak{g}, K_{\infty})$-module. 

In general, the restriction of $\pi_{\infty}$ to 
the subgroup $\SO^{+}(L\otimes \R)$ of $\SO(L\otimes \R)$ 
either remains irreducible or 
decomposes into two irreducible components 
that are exchanged by the action of $K_{\infty}^{-}$. 
When this restriction contains $L(\wedge^{\ell}, -k)$ as 
an irreducible component, it is reducible, 
and the other component is the highest weight module of weight 
$(\wedge^{\ell}, k)$. 
In this case, by abusing notation, we shall simply write 
$\pi_{\infty} \simeq L(\wedge^{\ell}, -k)$. 
    
With these preliminaries, 
Proposition \ref{prop: auto rep to MF} can be deduced as follows. 

\begin{proof}[(Proof of Proposition \ref{prop: auto rep to MF})]
We realize the given representation $\pi$ 
as a sub representation of 
$L^2({\SO}(L\otimes {\Q})\backslash {\SO}(L\otimes {\A}))$. 
By our assumption $(\pi_{p})^{K_p}\ne 0$, 
we may choose a nonzero $K_p$-invariant vector 
$v_p \in \pi_{p}$ for each $p<\infty$. 
(For almost all $p$, we have $\dim (\pi_{p})^{K_p} =1$ 
and so this vector is unique up to constant.) 
Taking the tensor product with the vector $\otimes_{p<\infty} v_p$ 
defines an embedding 
$$
\pi_{\infty} \hookrightarrow \pi \subset L^2({\SO}(L\otimes {\Q})\backslash {\SO}(L\otimes {\A})), 
$$
whose image is $K_{{\rm fin}}$-invariant. 
Restricting this space to the factor ${\SO}(L\otimes {\R})$ of ${\SO}(L\otimes {\A})$, 
we obtain a $(\mathfrak{g}, K_{\infty})$-module $\pi_{\infty}'$ 
isomorphic to $\pi_{\infty}\simeq L(\wedge^{n-k}, -k)$. 
By Corollary \ref{cor: class number 1}, 
$\pi_{\infty}'$ is contained in $L^2({\SO}^+(L)\backslash {\SO}^{+}(L\otimes {\R}))$.  
Then our assertion follows from Proposition \ref{prop: hol form via auto rep}. 
\end{proof}

\section{The lattices}\label{sec: lattice}

In this section, we specialize our lattices 
by a requirement from local representation theory. 
In what follows, the symbol $U$ stands for 
the \textit{integral} hyperbolic plane 
over any given ring of characteristic $0$ ($\Z$ or ${\Zp}$), namely 
the symmetric bilinear form expressed by the Gram matrix 
$\begin{pmatrix} 0 & 1 \\ 1 & 0 \end{pmatrix}$. 

Let $L$ be a lattice of signature $(2, n)$ with $n$ odd. 
Let $p<\infty$ be a prime. 
Following \cite{GH} Definition 2.4.4, we say that 
${\SO}(L\otimes {\Zp})$ is a \textit{hyperspecial subgroup} of 
${\SO}(L\otimes {\Qp})$ if ${\SO}(L\otimes {\Zp})$, 
as an algebraic group over ${\Zp}$, 
has reductive fiber over $\mathbb{F}_p$. 
In general, by \cite{GH} Theorem 2.4.3, 
${\SO}(L\otimes {\Qp})$ has a hyperspecial subgroup 
(not necessarily ${\SO}(L\otimes {\Zp})$) if and only if 
${\SO}(L\otimes {\Qp})$ is split 
($\Leftrightarrow$ quasi-split since $n$ is odd). 
In view of Proposition \ref{prop: everywhere split}, 
we shall assume $n\equiv 1, 3 \bmod 8$ in what follows. 

We consider the following special even lattices: 
\begin{equation}\label{eqn: lattice}
L = 
\begin{cases}
2U \oplus m E_8 \oplus A_1 & n\equiv 3 \bmod 8, \\ 
2U \oplus m E_8 \oplus E_7 & n\equiv 1 \bmod 8. 
\end{cases}
\end{equation}
Note that the second lattice can also be written as 
$\langle 2 \rangle \oplus U \oplus (m+1)E_8$. 
These lattices are the even lattices of minimal discriminant in each $n$. 
(They also appear in \cite{CL} as "$\mathtt{q}$-$\mathtt{i}$-modules".) 
Clearly these lattices satisfy the condition 
${\SO}^{+}(L)\ne {\SO}(L)\ne {\rm O}(L)$ in Proposition \ref{prop: auto rep to MF}. 
It is also classical that they have class number $1$.

\begin{lemma}\label{prop: lattice}
Let $L$ be as in \eqref{eqn: lattice}. 
Then ${\SO}(L\otimes {\Zp})$ is a hyperspecial subgroup of 
${\SO}(L\otimes {\Qp})$ for every prime $p<\infty$. 
\end{lemma}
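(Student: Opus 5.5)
We have to show that the smooth group scheme $\SO(L\otimes\Zp)$ over $\Zp$ has reductive special fiber, and we split into two cases: $p$ odd and $p=2$.

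\emph{Odd $p$.} The discriminant of $L$ is $-2$ in both cases ($A_1$ and $E_7$ each have discriminant $2$, and $U$, $E_8$ are unimodular). Hence $L\otimes\Zp$ is a unimodular $\Zp$-lattice of odd rank $n+2$. Reduction mod $p$ then gives a nondegenerate quadratic space over $\mathbb{F}_p$. It follows that $\SO(L\otimes\Zp)$ is a smooth group scheme whose special fiber $\SO_{n+2}$ over $\mathbb{F}_p$ is connected reductive; in fact it is split semisimple of type $B_{(n+1)/2}$. This is the standard fact that the orthogonal group of a regular quadratic form over $\Zp$ is reductive.

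\emph{The prime $p=2$.} Here $L$ is not unimodular, so we use a normal form instead. Over $\Z_2$, the lattice $E_8\otimes\Z_2$ is even unimodular of rank $8$, and its discriminant is a square. By the classification of even unimodular $\Z_2$-lattices it is isometric to $4U$; one can also see this because $E_8\oplus U\simeq 5U$ as $\Z$-lattices, so Witt cancellation over $\Z_2$ applies to even unimodular lattices. Likewise $E_7\otimes\Z_2\simeq 3U\oplus A_1$, which follows from $\langle 2\rangle\oplus U\oplus(m+1)E_8$ having the same genus as $\langle 2\rangle\oplus(\text{hyperbolic planes})$ at $2$. In both cases we therefore get
$$L\otimes\Z_2\simeq rU\oplus\langle\pm2\rangle,\qquad r=(n+1)/2,$$
with the sign coming from $A_1=\langle-2\rangle$. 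Up to scaling this is the quadratic form
$$q(x)=x_1y_1+\cdots+x_ry_r\pm z^2,$$
where $q=(\ ,\ )/2$, which is integral because $L$ is even.

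This is exactly the standard \emph{regular} quadratic form of odd rank over $\Z_2$ in the sense of smooth group schemes (Demazure, or Conrad's notes on reductive group schemes). Its half-discriminant is a unit, even though the bilinear form has a radical of dimension $1$ mod $2$. For such a $q$, the scheme $\SO(q)$ over $\Z_2$ is smooth with connected reductive fibers; over $\mathbb{F}_2$ the fiber is again split of type $B_r$, isomorphic as an abstract group to $\Sp_{2r}(\mathbb{F}_2)$.

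The remaining point is that the paper's $\SO(L\otimes\Z_2)$, meaning the determinant-one isometries of the lattice, agrees with this reductive model. Since $q$ is determined by the bilinear form on $L$ (as $L$ is even), isometries of the bilinear lattice coincide with isometries of $q$. So $\SO(L\otimes\Z_2)$ equals the $\Z_2$-points of $\SO(q)$. We then invoke \cite{GH} Definition 2.4.4 to conclude it is hyperspecial.

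The main obstacle is this $p=2$ case. Two things need care: the $2$-adic normal form, which I would justify via Nikulin's discriminant form argument (genus determined by signature and discriminant form $\langle\pm1/2\rangle$), and the correct scheme-theoretic definition of $\SO$ in characteristic $2$ for odd rank. For the latter I would cite Conrad's treatment of $\SO(q)$ for the regular form above, rather than using the naive determinant condition.
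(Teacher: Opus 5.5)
This is essentially the paper's proof: you use the $2$-adic normal form $L\otimes\Z_2\simeq rU\oplus\langle\pm2\rangle$ (from $E_8\otimes\Z_2\simeq 4U$), pass to $q=\sum x_iy_i\pm z^2$, and invoke reductivity of the type-$B$ special fiber of $\SO(q)$, citing Conrad where the paper cites Milne; your remark that only the $\Z_2$-points of the bilinear and quadratic models need to coincide is, if anything, more explicit than the paper's identification. Two side claims are wrong but harmless: $E_8\oplus U\not\simeq 5U$ over $\Z$ (the signatures are $(1,9)$ and $(5,5)$), and $E_7\otimes\Z_2\simeq 3U\oplus\langle 2\rangle$ rather than $3U\oplus A_1$, because the determinants $-2$ and $2$ differ by the $2$-adic non-square $-1$ (this agrees with $L\simeq\langle 2\rangle\oplus U\oplus(m+1)E_8$), and either sign gives a regular $q$, so the conclusion is unaffected.
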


\begin{proof}
This should be well-known, but since the case $p=2$ is somewhat subtle, 
we provide a proof in this case for the sake of completeness. 
(The case $p>2$ is similar and simpler.) 
It is convenient to pass from symmetric forms to quadratic forms 
already at the level of ${\Z}_{2}$. 
First we recall that $E_8\otimes {\Z}_2 \simeq 4U$, 
as both are even unimodular and have the same discriminant 
(cf.~\cite{Ge} Corollary 8.10). 
Hence we have 
\begin{equation}\label{eqn: LZp}
L\otimes{\Z}_2 \simeq U \oplus \cdots \oplus U \oplus \langle \pm 2 \rangle. 
\end{equation}
Thus the ${\Z}_2$-lattice $L\otimes {\Z}_2$ 
    is the polar symmetric form associated 
    (in the sense of \cite{Milne} \S 24.h) to the quadratic form 
    $$
    q(x_0, \cdots, x_{n+1}) \; = \; 
    \pm x_0^2 + \sum_{i=1}^{(n+1)/2} x_i x_{n+2-i}
    $$
    on ${\Z}_2^{\oplus n+2}$. 
    Since ${\Z}_2$ has characteristic $0$, we can identify 
    $$
    {\SO}(L\otimes {\Z}_2) \simeq {\SO}({\Z}_2^{\oplus n+2}, \: q) 
    $$
    as algebraic groups over $\Z_2$. 
    Then the special fiber of ${\SO}({\Z}_2^{\oplus n+2}, \: q)$ is 
    ${\SO}(\mathbb{F}_2^{\oplus n+2}, \: q)$, 
    and this is semisimple by \cite{Milne} \S 21.j, Example $(B_n)$. 
\end{proof}

\begin{remark}
    We can also give a proof based on the Bruhat-Tits theory as follows. 
    By \eqref{eqn: LZp}, 
    $\SO(L \otimes \bbQ_p)$ is split. 
    Hence, by \cite{Kaletha-Prasad_Bruhat-Tits} Proposition 7.11.7, 
    the special vertex in the Bruhat-Tits building for 
    $\SO(L \otimes \bbQ_p)$ is hyperspecial.
    A direct calculation shows that 
    the subgroup of $\SO(L \otimes \bbZ_p)$ consisting of elements with spinor norm $1$ contains representatives of 
    all elements of the Weyl group of $\SO(L \otimes \bbQ_p)$.
    Therefore $\SO(L \otimes \bbZ_p)$ defines a special vertex 
    by \cite{Kaletha-Prasad_Bruhat-Tits} Proposition 9.9.1,  
    and hence hyperspecial. 
\end{remark}

\section{Arthur multiplicity formula}\label{sec: AMT}

In this section, we recall the theory of 
endoscopic classification of representations of odd orthogonal groups 
following \cite{Arthur} and \cite{AGI+}. 
We take $L$ to be the lattice \eqref{eqn: lattice} of signature $(2, n)$, 
and fix $K_p={\SO}(L\otimes {\Zp})$ as 
the reference hyperspecial subgroup of 
${\SO}(L\otimes {\Qp})$ for every $p<\infty$. 
The choice of a Whittaker data is subsumed.

\subsection{Local A-parameters}\label{ssec: local A}

Let $F$ be either ${\R}$ or ${\Qp}$ with $p< \infty$.
Let $W_{F}$ be the Weil group of $F$. 
The Weil-Deligne group of $F$ is defined by
\[
W_F' = 
\begin{cases}
    W_F \times \SL_2(\bbC) &\text{if $F=\bbQ_p$;}\\
    W_F & \text{if $F= \bbR$.}
\end{cases}
\]
Recall that the Langlands dual group of ${\SO}(L\otimes F)$ is 
$\Sp_{n+1}(\bbC)\times W_{F}$. 
We consider continuous homomorphisms 
$$\psi : W_F' \times \SL_2(\bbC) \rightarrow \Sp_{n+1}(\bbC)$$
such that 
$\psi|_{\SL_2(\bbC)}$ is algebraic and 
$\psi|_{W_F'}$ corresponds to an $L$-parameter with bounded image.
Such a homomorphism $\psi$, considered up to $\Sp_{n+1}({\C)}$-conjugacy, 
is called a \textit{local $A$-parameter}. 
When $F={\Qp}$ with $p<\infty$, 
a local $A$-parameter is said to be \textit{unramified} 
if it is trivial on the inertia subgroup of $W_{F}$.

To a local $A$-parameter $\psi$ we can associate 
a finite set $\Pi(\psi)$ of unitary representations of ${\SO}(L\otimes F)$, 
a finite 2-elementary abelian group $\calS_\psi$, 
and an embedding (determined by our choice of the Whittaker data) 
$$\iota : \Pi(\psi) \hookrightarrow \Irr(\calS_{\psi})$$
into the group of characters of $\calS_{\psi}$. 
The set $\Pi(\psi)$ is called the \textit{$A$-packet} of $\psi$, 
and $\calS_{\psi}$ is called the \textit{component group} of $\psi$. 
If $\eta$ is a character of $\calS_{\psi}$ 
contained in the image of $\iota$, 
we denote by $\pi(\psi,\eta)$ 
the corresponding representation of ${\SO}(L\otimes F)$. 
When $\eta$ is not contained in the image of $\iota$, 
we set $\pi(\psi, \eta) = 0$ for completeness.

In this paper we will take $\eta = \mathbf{1}$ for 
$F={\Qp}$ with $p<\infty$. 
An automorphic representation $\pi$ of ${\SO}(L\otimes {\Qp})$ 
is called \textit{unramified} if $\pi^{K_p}\ne 0$ 
for our hyperspecial subgroup 
$K_p={\SO}(L\otimes {\Zp})$. 

\begin{lemma}[\cite{Arthur}]\label{lemma_A_packet_unramified}
    Let $F={\Qp}$ with $p<\infty$.
    If the local $A$-parameter $\psi$ is unramified, 
    then the trivial character $\mathbf{1}$ is contained in 
    the image of $\iota$ and 
    the representation $\pi(\psi, \mathbf{1})$ is unramified.
\end{lemma}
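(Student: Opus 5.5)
We plan to reduce the statement to the unramified Langlands correspondence together with Arthur's construction of $A$-packets from $L$-packets via Aubert duality/Moeglin's description. Since $\psi$ is unramified, the restriction of $\psi$ to $W_{\Qp}\times \SL_2(\C)\times\SL_2(\C)$ factors through $\langle \mathrm{Frob}\rangle\times \SL_2(\C)\times \SL_2(\C)$ and is trivial on the inertia subgroup. Moreover, $\psi$ is trivial on the Deligne $\SL_2(\C)$, because unramified bounded parameters have no monodromy. So $\psi$ decomposes as $\bigoplus_i \chi_i\otimes S_{b_i}$, where the $\chi_i$ are unitary unramified characters and $S_b$ is the $b$-dimensional representation of the Arthur $\SL_2$.

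We then attach to $\psi$ its associated $L$-parameter
$$\phi_\psi(w)=\psi\bigl(w,\diag(|w|^{1/2},|w|^{-1/2})\bigr).$$
This is an unramified $L$-parameter. By the Satake isomorphism for the hyperspecial subgroup $K_p$ (Lemma \ref{prop: lattice}), it determines a unique unramified irreducible representation $\pi_{\phi_\psi}$. This representation is the Langlands quotient of an unramified principal series induced from a Borel subgroup.

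The key input is Arthur's result (\cite{Arthur}, Proposition 7.4.1 and the surrounding discussion) that $\pi_{\phi_\psi}$ lies in $\Pi(\psi)$, and that the character attached to it by $\iota$, normalized by the Whittaker datum compatible with $K_p$, is trivial. The plan for this step is as follows.

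\begin{enumerate}
\item Reduce to the case where $\psi$ factors through a Levi subgroup $\GL_{b_1}\times\cdots\times \SO$. Here each $\chi_i\otimes S_{b_i}$ contributes the one-dimensional unramified representation $\chi_i\circ\det$ of $\GL_{b_i}$.
\item Write the packet $\Pi(\psi)$ as the set of constituents of the representation parabolically induced from the packet of the small-rank factor, using Arthur's compatibility of $A$-packets with parabolic induction.
\item Handle the remaining "elementary" part, where $\psi$ is a sum of pairwise distinct self-dual pieces $S_{b}$ (possibly twisted by the unramified quadratic character). There the packet is described through Aubert involution of a tempered packet. Since the unramified quadratic character is trivial on $\Z_p^\times$, the tempered $L$-packet $\Pi(\hat\psi)$ contains the generic member, and the generic member corresponds to the trivial character. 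One then needs to check that its dual is the spherical member.
\item Verify that the induced representation has a unique $K_p$-spherical constituent. This follows from the Iwasawa decomposition $G=PK_p$, which holds because $K_p$ is hyperspecial.
\end{enumerate}

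The main obstacle is the normalization of $\iota$. Showing that the spherical constituent carries exactly the trivial character of $\calS_\psi$, rather than some nontrivial one, requires matching the Whittaker datum with $K_p$. Concretely, one must check that the generic character is unramified with respect to $K_p$ (conductor $\Zp$), and then use Arthur's endoscopic character identities at the spherical vector: the Hecke unit $\mathbf{1}_{K_p}$ transfers to $\mathbf{1}$ on every unramified endoscopic group by the fundamental lemma. Evaluating the identity $\sum_\pi \langle s,\pi\rangle\,\mathrm{tr}\,\pi(\mathbf{1}_{K_p}) = (\text{stable trace on endoscopic group})$ for each $s\in\calS_\psi$ then shows that only the spherical $\pi$ contributes, and that it does so with sign $+1$ for every $s$. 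This gives $\iota(\pi_{\phi_\psi})=\mathbf{1}$. Here I would rely on the fact, proved in \cite{Arthur}, that the unramified member occurs with multiplicity one in the packet.
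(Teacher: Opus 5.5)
Your skeleton is the paper's. The spherical representation with Satake parameter $\phi_\psi$ lies in $\Pi(\phi_\psi)\subset\Pi(\psi)$ by \cite{Arthur} Proposition 7.4.1. The triviality of its character is exactly \cite{Arthur} Theorem 1.5.1(a): an unramified member of the packet of an unramified parameter has $\eta=\mathbf{1}$. The paper simply quotes this, so your steps 1--4 and the endoscopic computation are not needed.

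There is, however, a false step at the start: ``unramified bounded parameters have no monodromy'' is wrong. Take $\psi=\mathbf{1}\boxtimes\Sym^{n}\boxtimes\mathbf{1}$ on $W_{\Qp}\times\SL_2(\C)\times\SL_2(\C)$, with the Deligne $\SL_2(\C)$ acting irreducibly on the $(n+1)$-dimensional symplectic space. It is bounded and trivial on inertia, but $\Pi(\psi)=\{\mathrm{St}\}$, and the Steinberg representation has no $K_p$-fixed vector. So triviality on the Deligne $\SL_2(\C)$ cannot be deduced from boundedness. It has to be part of what ``unramified'' means: Arthur's definition is triviality on $I_{\Qp}\times\SL_2(\C)$. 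In the application it holds because the $\phi_{i,p}$ are parameters of unramified $\pi_{i,p}$ (Remark \ref{remark: unramified}). Without it, neither your Satake step nor the lemma itself is true.

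Second, if you want to re-derive the normalization rather than cite it, your argument does not close as written. With $f=\mathbf{1}_{K_p}$, the character identity only gives $\langle s_\psi x,\pi_0\rangle=f'(\psi')$ for the unique unramified member $\pi_0$. To get the sign $+1$ for every $x$, you must know $f'(\psi')=1$, and you leave this as ``(stable trace on endoscopic group)''. The ordinary fundamental lemma does not supply that value, because the stable linear forms are normalized through twisted characters of $\GL_N$. You also need:
\begin{itemize}
\item the twisted fundamental lemma;
\item the fact that the Whittaker-normalized twisted intertwining operator acts by $+1$ on the spherical vector of $\pi_{\psi'}$.
\end{itemize}
The second point is exactly where the compatibility of the Whittaker datum with $K_p$ enters. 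So your ``main obstacle'' has been moved into the unevaluated right-hand side rather than resolved. Step 3 has a similar omission: it would have to track how $\iota$ changes under Aubert duality, which you do not address. Either supply these inputs or, as the paper does, cite Theorem 1.5.1(a) directly.
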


\begin{proof}
This is essentially contained in Theorem 1.5.1 (a) of \cite{Arthur}.
To supply an argument, let $\phi_\psi$ be the $L$-parameter associated with $\psi$.
By Proposition 7.4.1 of \cite{Arthur}, if $\psi$ is unramified, then $\Pi(\phi_\psi)$ contains an unramified representation.
Hence, there exists $\eta$ such that $\pi(\psi, \eta)$ is unramified.
By Theorem 1.5.1 (a) of \cite{Arthur}, the character $\eta$ is trivial.
Thus $\mathbf{1}$ is contained in the image of $\iota$ and 
$\pi(\psi, \mathbf{1})$ is unramified.
\end{proof}

\begin{remark}\label{remark: unramified local}
    Note that the following form of converse also holds:
    if $\pi(\psi, \eta)$ is unramified, then $\psi$ is unramified and $\eta = \mathbf{1}$.
    Indeed, by \cite{Arthur} Theorem 1.5.1 (a), 
    we have $\eta = \mathbf{1}$.
    Applying \cite{Arthur} Proposition 7.4.1, we see that 
    $\pi(\psi, \mathbf{1})$ is contained in $\Pi(\phi_\psi)$. 
    Hence $\phi_\psi$ is unramified, 
    and so $\psi$ is unramified.
\end{remark}

\subsection{Arthur multiplicity formula}\label{subsec_def_global_A_parameter}

In this subsection, we recall the Arthur multiplicity formula. 

\begin{definition}\label{def: A-packet}
By a \textit{global $A$-parameter}, we mean a formal sum
\[
\psi = \bigoplus_{i=1}^{r} (\pi_i, d_i) 
\]
such that
\begin{itemize}
    \item $\pi_i$ is a unitary cuspidal automorphic representation of $\GL_{m_i}(\bbA)$;
    \item $d_i$ is a positive integer such that $\sum_{i=1}^r m_id_i = n+1$;
    \item If $d_i$ is odd, then $\pi_i$ is symplectic, i.e, $L(s,\pi_i,\Sym^2)$ has a pole at $s=1$;
    \item If $d_i$ is even, then $\pi_i$ is orthogonal, i.e, $L(s,\pi_i,\wedge^2)$ has a pole at $s=1$;
    \item If $i \neq j$ and $\pi_i \isom \pi_j$, then $d_i \neq d_j$.
\end{itemize}
\end{definition}

The \textit{component group} of the global $A$-parameter $\psi$ 
is defined formally by
\[
\calS_\psi = \bigoplus_{i=1}^r (\bbZ/2\bbZ) e_{i}
\]
where the basis element $e_{i}$ corresponds to $(\pi_i, d_i)$. 
We define a character $\vep_\psi$ of $\calS_\psi$ as follows. 
When $r>1$, we set 
\begin{align}\label{def_vep_psi}
\vep_\psi(e_{i}) = \prod_{j \neq i} \vep(\pi_i \times \pi_j)^{\min (d_i, d_j)},
\end{align}
where $\vep(\pi_i \times \pi_j) \in \{\pm1\}$ is 
the root number of the Rankin-Selberg $L$-function $L(s, \pi_i \times \pi_j)$.
By \cite{Arthur} Theorem 1.5.3, 
we have $\vep(\pi_i \times \pi_j)=1$ if $d_i \equiv d_j \bmod 2$. 
When $r=1$, we set $\vep_\psi=\mathbf{1}$. 

\begin{remark}
These are not quite the same as the definition given in \cite{Arthur}, 
but see \cite{CL} \S 8.3.5 for these descriptions.
Our group $\calS_\psi$ is $C_{\psi}=\{ \pm 1 \}^{k}$ 
in the notation of \cite{CL} \S 8.3.5. 
We refer to \cite{CL} p.203 for a procedure 
for calculating $\vep(\pi_i \times \pi_j)$ 
for $A$-parameters appearing in this paper. 
\end{remark}

We can construct an irreducible representation of 
$\SO(L \otimes \bbA)$ from a global 
$A$-parameter $\psi$ by the following process: 
\begin{eqnarray}\label{eqn: A-packet to rep}
     \psi = \bigoplus_{i} (\pi_i, d_i) 
    & \mapsto &(\psi_{p})_{p\leq \infty} := 
    \bigl( \bigoplus_{i} \phi_{i,p} \boxtimes {\Sym}^{d_i-1} \bigr)_{p\leq \infty} \\
    & \mapsto & \pi(\psi, \eta) := 
    \bigotimes_{p\leq \infty} \pi(\psi_p, \eta_p). \nonumber
\end{eqnarray}
 
In the first line, we decompose $\pi_{i}$ into the restricted tensor product 
$\pi_i = \otimes_{p} \pi_{i,p}$ and define 
$\phi_{i,p}$ to be the representation of $W_{F}'$ corresponding to $\pi_{i,p}$ by the Local Langlands Correspondence. 
(The representation $\phi_{i,p}$ is called 
the \textit{$L$-parameter} of $\pi_{i,p}$.) 
The symbol ${\Sym}^{d-1}$ stands for the $(d-1)$-th symmetric tensor 
of the standard representation of $\SL_2(\C)$; 
it has dimension $d$. 
In this way we obtain the local $A$-parameter $\psi_{p}$ for each $p\leq \infty$. 
In the second line, we choose a character $\eta=\prod_{p}\eta_{p}$ 
of $\oplus_{p}S_{\psi_{p}}$ 
such that $\eta_{p}=\mathbf{1}$ for almost all $p$. 
Then we attach the local representation $\pi(\psi_p, \eta_p)$ for each $p$ 
as explained in \S \ref{ssec: local A}. 
Finally, we take their restricted tensor product, which is well-defined by Lemma \ref{lemma_A_packet_unramified}. 

\begin{remark}\label{remark: unramified}
If $\pi_{i}$ is unramified for every $i$, 
then $\phi_{i,p}$ is unramified for every $i$ and $p<\infty$ by a standard property of the Local Langlands Correspondence. 
Hence the local $A$-parameter $\psi_{p}$ is unramified for every $p<\infty$. 
\end{remark}

The process $\psi \mapsto \psi_p$ is called the \textit{localization} of $\psi$. 
It induces a homomorphism 
$\Delta_p \colon \calS_\psi \rightarrow \calS_{\psi_p}$ 
between the component groups. 
We write
$\Delta =(\Delta_{p})_{p}$. 

Arthur's multiplicity formula describes 
the discrete part of 
the space of square-integrable automorphic forms on 
${\SO}(L\otimes \A)$ 
in terms of global $A$-parameters. 
Originally it was proved by Arthur \cite{Arthur} 
in the case of signature $(N, N+1)$; 
the extension to the case of arbitrary signature was done by 
Ta\"{i}bi \cite{Taibi_AMF} and Ishimoto \cite{2024_Ishimoto_Arthur}.
Since the full statement is rather long to state, 
we extract only a part of it that we need later.
\begin{theorem}
[\cite{Arthur}, \cite{Taibi_AMF}, \cite{2024_Ishimoto_Arthur}]\label{theorem_AMF}
    We have the inclusion 
    \[
    L^{2}_{disc}(\SO(L\otimes{\Q}) \backslash \SO(L\otimes{\A})) 
    \; \supset \; \bigoplus_{\psi}\bigoplus_{\eta} 
    \pi(\psi, \eta), 
    \]
    where $\psi$ runs over global $A$-parameters whose archimedean component $\psi_\infty$ is Adams-Johnson 
    (in the sense explained in \S \ref{sec: AJ}), 
    and $\eta=\prod_{p}\eta_{p}$ runs over characters of 
    $\oplus_{p}S_{\psi_{p}}$ such that $\eta_{p}=\mathbf{1}$ for almost all $p$ and satisfying 
    $$\eta \circ \Delta = \vep_\psi.$$
\end{theorem}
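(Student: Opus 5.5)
The statement is essentially a citation, so the plan is to specialize the general Arthur multiplicity formula to our group rather than prove anything new. Write $G={\SO}(L\otimes{\Q})$. By Proposition \ref{prop: everywhere split}, $G$ is split at every $p<\infty$. At $\infty$ it is the non-quasi-split inner form ${\SO}(2,n)$ of the split group ${\SO}(\frac{n+3}{2},\frac{n-1}{2})$. For the quasi-split inner form, Arthur's Theorem 1.5.2 in \cite{Arthur} gives
\[
L^2_{disc} = \bigoplus_{\psi}\bigoplus_{\pi\in\Pi_\psi} m_\psi(\pi)\,\pi ,
\qquad m_\psi(\pi)\in\{0,1\},
\]
with $m_\psi(\pi)=1$ exactly when the character $\langle\cdot,\pi\rangle$ of $\calS_\psi$, obtained by pulling back the local characters $\iota(\pi_p)$ through $\Delta$, equals $\vep_\psi$. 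The dual group is $\Sp_{n+1}(\C)$, which has no outer automorphisms, so no $\widetilde{\rm O}$-type ambiguity arises and $m_\psi(\pi)$ really is $0$ or $1$.

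For our inner form I would invoke Ta\"ibi \cite{Taibi_AMF}, or Ishimoto \cite{2024_Ishimoto_Arthur} in general. Ta\"ibi proves the same formula for inner forms of odd orthogonal groups that are quasi-split at all finite places, under the hypothesis that $\psi_\infty$ is Adams--Johnson. This matches our situation exactly, and it is why the inclusion is asserted only for such $\psi$.

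The key steps, in order, are as follows.

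\emph{Step 1 (normalizing the archimedean packet).} Fix the pure (or rigid) inner twist realizing ${\SO}(2,n)$ at $\infty$. The local packet $\Pi(\psi_\infty)$ and the map $\iota$ of \S\ref{ssec: local A} are then defined via Kaletha's normalization. Under it, characters of $\calS_{\psi_\infty}$ are twisted by the character attached to the inner twist, evaluated on the image of the central element $-1\in\Sp_{n+1}(\C)$. The Adams--Johnson packets of Arancibia--Moeglin--Renard coincide with Arthur's packets, so the description of $\Pi(\psi_\infty)$ from \S\ref{sec: AJ} is legitimate.

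\emph{Step 2 (the global sign condition).} At $p<\infty$ the group is split and no twist occurs. The global product of the inner-twist characters is trivial, since this is the Kottwitz sign/Hasse principle condition. Hence the global condition reduces to $\eta\circ\Delta=\vep_\psi$ with the $\eta_p$ taken as characters of $\calS_{\psi_p}$.

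\emph{Step 3 (extracting the inclusion).} Discard all terms except those with $\psi_\infty$ Adams--Johnson. For such $\psi$ and $\eta$ satisfying the condition, either $\pi(\psi,\eta)=0$ by our convention, or it occurs with multiplicity one. The restricted tensor product is well-defined by Lemma \ref{lemma_A_packet_unramified}. Distinct pairs $(\psi,\eta)$ give non-isomorphic representations, or at least disjoint isotypic pieces, by the strong multiplicity-one part of Arthur's classification, so the sum embeds.

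The main obstacle is Step 1: getting the archimedean normalization consistent. One has to check that the convention in $\vep_\psi$ (taken from \cite{CL} \S 8.3.5 rather than Arthur's original definition) and the chosen Whittaker datum are compatible with Ta\"ibi's normalization of $\iota$ for ${\SO}(2,n)$. In particular, one must verify that any discrepancy is absorbed into the inner-twist character whose global product is trivial. I would first settle this by comparing with the explicit computations in \cite{CL} Chapter 8, where the same lattices (the $\mathtt{q}$-$\mathtt{i}$-modules) are treated.
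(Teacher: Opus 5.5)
Your proposal is correct and follows essentially the paper's route. The paper's proof is only a citation of \cite{2024_Ishimoto_Arthur} Theorem 7.2, with the remark that parameters whose $\psi_\infty$ is Adams--Johnson satisfy Hypothesis 7.1 there; your appeal to \cite{Taibi_AMF} works equally well, since $\SO(L\otimes\Qp)$ is split for every $p<\infty$ and $\SO(2,n)$ has discrete series, and your Steps 1--3 just unpack what that citation contains. One small slip: the split inner form at $\infty$ is $\SO(\frac{n+3}{2},\frac{n+1}{2})$, not $\SO(\frac{n+3}{2},\frac{n-1}{2})$, which would have dimension $n+1$.
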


\begin{proof}
    See \cite{2024_Ishimoto_Arthur} Theorem 7.2.
    We note that global $A$-parameters with $\psi_\infty$ Adams-Johnson satisfy Hypothesis 7.1 there. 
\end{proof}

\begin{remark}
  Theorem \ref{theorem_AMF} is conditional on the weighted twisted fundamental lemma. 
    See \cite{AGI+} \S 0.4 for more details.
\end{remark}

\section{Adams-Johnson parameters}\label{sec: AJ}

In this section, we give a more detailed account of 
a special class of real $A$-parameters and real $A$-packets, 
and connect it with the lowest weight modules considered in 
\S \ref{sec: auto rep to hol form}. 
We keep the setting of \S \ref{sec: AMT}.

\subsection{Adams-Johnson parameters}\label{subsection_AJ_parameter}

Recall that the real Weil group is 
$W_\bbR = \bbC^\times \sqcup j \, \bbC^\times$ as a set with the group law
$j^2 = -1$ and $jzj^{-1} = \overline{z}$ for $z \in \bbC^\times$. 
We denote by $\sgn\colon W_{\R}\to \{ \pm 1 \}$ 
the sign character of $W_{\R}$, 
namely it is trivial on ${\C}^{\times}$ 
and sends $j$ to $-1$. 
For each positive integer $k$, 
we define a $2$-dimensional representation $\rho_k$ of $W_\bbR$ 
as the following induced representation from $\C^{\times}$: 
\[
\rho_k(z) = 
\begin{pmatrix}
    (z/|z|)^{k} & 0\\
    0& (z/|z|)^{-k}
\end{pmatrix}
,\qquad
\rho_k(j)
=
\begin{pmatrix}
    0&(-1)^k\\
    1&0
\end{pmatrix}.
\]

\begin{definition}\label{def: AJ} 
An $A$-parameter of $\SO(L \otimes \bbR)$ of the form  
        \begin{align}\label{def_Adams_Johnson}
        \psi = 
        \left( \bigoplus_{i=1}^r \rho_{k_i} \boxtimes \Sym^{d_i-1} \right) 
        \: \oplus \: \sgn^\delta \boxtimes \Sym^{d_0-1} 
        \end{align}
is called an \textit{Adams-Johnson parameter} if it satisfies 
the following conditions: 
        \begin{itemize}
            \item $k_i > 0, d_i>0$ for $1 \leq i \leq r$; 
            \item $d_0\geq 0$ (when $d_0=0$, 
            $\sgn^\delta \boxtimes \Sym^{-1}$ means $0$); 
            \item $k_i \equiv d_i \bmod 2$ for any $1 \leq i \leq r$
            and $d_0 \equiv 0 \bmod 2$;
            \item $d_0 + 2 \sum_{i=1}^r d_i = n+1$;
            \item $\delta \in \{0,1\}$;
            \item $k_i-k_{i+1} \geq d_i+d_{i+1}$ for any $1 \leq i < r$ and $k_r \geq d_r+d_0$.
        \end{itemize}
\end{definition}

Here we follow the explicit description in 
\cite{Atobe_A_packets} \S 9.2 and \cite{CR} p.90. 
Note that $k_1>k_2> \cdots$ by the last condition. 
The \textit{infinitesimal character} $\chi_\psi$ of 
the Adams-Johnson parameter $\psi$ is defined as 
the element 
\begin{align}\label{eqn: inf chara}
&\left(\frac{k_1+d_1-1}{2},\frac{k_1+d_1-3}{2},\ldots, \frac{k_1-d_1+1}{2},\right. \\
&\left.\qquad\ldots,\frac{k_r+d_r-1}{2}, \ldots, \frac{k_r-d_r+1}{2},\frac{d_0-1}{2}, \frac{d_0-3}{2},\ldots, \frac{1}{2}\right), 
\notag
\end{align}
of $\bbC^{(n+1)/2}/W$, 
where $W \isom \frakS_{(n+1)/2} \rtimes (\bbZ/2\bbZ)^{(n+1)/2}$ 
is the Weyl group of $\SO_{n+2}(\C)$. 
By the construction \cite{AJ}, 
this is equal to the infinitesimal character 
(in the sense of Harish-Chandra) of every representation 
in the $A$-packet $\Pi(\psi)$. 
The infinitesimal character of the trivial representation of $\SO(L \otimes \bbR)$ is 
the Weyl vector 
\begin{equation}\label{eqn: inf chara trivial}
    \left(
    n/2, n/2-1, \ldots, 1/2    \right).
\end{equation}

Adams and Johnson \cite[Theorem 2.12]{AJ} constructed 
a set $\Pi^{\mathrm{AJ}}(\psi)$ of unitary representations of $\SO(L \otimes \bbR)$ 
with an embedding 
$\iota_\infty^{\mathrm{AJ}} \colon \Pi^{\mathrm{AJ}}(\psi) \hookrightarrow \Irr(\calS_{\psi})$ 
by using cohomological induction. 
Here the component group $\calS_\psi$ is defined formally by
\[
\calS_\psi = \bigoplus_{i=0}^r(\bbZ / 2\bbZ) e_{i,\infty}
\]
with the basis element $e_{i,\infty}$ 
corresponding to $\rho_{k_i} \boxtimes \Sym^{d_i-1}$ if $i \geq 1$, 
and to $\sgn^\delta \boxtimes \Sym^{d_0-1}$ if $i=0$. 
When $d_0=0$, we understand that the indices $i$ start from $i=1$. 
The element  
$z_\psi = \sum_{i} e_{i,\infty}$ 
of $\calS_\psi$ is called the \textit{central element}.

\begin{theorem}[{\cite{AMR}, \cite{MR}}]
Let $\psi$ be an Adams-Johnson parameter. 
     The Adams-Johnson packet $\Pi^{\mathrm{AJ}}(\psi)$ 
     coincides with the $A$-packet $\Pi(\psi)$, and we have 
     $\iota_\infty^{\mathrm{AJ}} = \iota_{\infty}$.
\end{theorem}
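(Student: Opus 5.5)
We do not reprove this result; we indicate the strategy of \cite{AMR} and \cite{MR} that we would follow. The key point is that Arthur's packet $\Pi(\psi)$, together with $\iota_{\infty}$, is characterized by character identities. First, the stable distribution $\eta_{\psi}=\sum_{\pi\in\Pi(\psi)}\langle s_{\psi},\iota_\infty(\pi)\rangle\,\Theta_{\pi}$ is the twisted transfer of the twisted character of the representation $\pi_{\psi}^{\GL}$ of $\GL_{n+1}(\R)$ attached to $\psi$ by the local Langlands correspondence. Second, for each semisimple $s\in\calS_\psi$, the transfer of the corresponding stable distribution on the endoscopic group equals $\sum_{\pi}\langle s s_\psi,\iota_\infty(\pi)\rangle\Theta_\pi$. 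It therefore suffices to show that the Adams-Johnson packet $\Pi^{\mathrm{AJ}}(\psi)$ and $\iota_\infty^{\mathrm{AJ}}$ satisfy the same identities. Linear independence of characters then forces equality of the multisets and of the maps.

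The plan has three steps.

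\emph{Step 1.} Describe $\Pi^{\mathrm{AJ}}(\psi)$ as the set of cohomologically induced modules $A_{\frakq}(\lambda)$. Here $\frakq$ runs over the $\theta$-stable parabolics whose Levi $\frakl$ has the shape $\prod_i \mathfrak{u}(p_i,q_i)\times\mathfrak{so}(\cdot,\cdot)$ dictated by the blocks $\rho_{k_i}\boxtimes\Sym^{d_i-1}$ and $\sgn^\delta\boxtimes\Sym^{d_0-1}$, and $\lambda$ is fixed by the infinitesimal character \eqref{eqn: inf chara}. Adams and Johnson proved, using the character formula for $A_\frakq(\lambda)$ in terms of discrete-series-type characters on the Levi, that the signed sum is stable and satisfies the ordinary endoscopic identities, with signs given by Kottwitz invariants.

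\emph{Step 2.} Prove the twisted identity. By Adams-Johnson (Speh-type) theory, $\pi^{\GL}_\psi$ is itself cohomologically induced from a $\theta$-stable parabolic of $\GL_{n+1}$ with Levi $\prod\GL_{2d_i}\times\GL_{d_0}$. One then compares twisted characters with the stable AJ character via cohomological induction in stages, matching the Levi data on both sides. This is the content of \cite{AMR}, which for the cases at hand is reduced to discrete series, where Shelstad's results apply. Alternatively, \cite{MR} compare Arthur's packets with those of Adams-Barbasch-Vogan through Mœglin's explicit construction.

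\emph{Step 3.} Match normalizations. Check that the character $\iota_\infty^{\mathrm{AJ}}(\pi)$, read off from the position of $\frakq$ relative to a fixed base point, agrees with the Whittaker-normalized $\iota_\infty$. This means locating the member with character $\mathbf{1}$, namely the generic-type base point determined by the Whittaker datum, and tracking the sign $\langle s_\psi,\cdot\rangle$ through the central element $z_\psi$.

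I expect Step 2 to be the main obstacle, together with the sign bookkeeping in Step 3. The twisted transfer to $\GL_{n+1}$ is the genuinely new input beyond \cite{AJ}, and the endoscopic signs and transfer factors must be matched exactly rather than up to a global character of $\calS_\psi$. My first approach would be induction on the number of blocks $r$. One peels off $\rho_{k_1}\boxtimes\Sym^{d_1-1}$ via a Levi $\GL_{d_1}\times\SO$ and uses compatibility of both sides with parabolic and cohomological induction. This leaves the base case of a single block, which reduces to known identities for discrete series and for one-dimensional representations.
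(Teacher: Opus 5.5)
Your proposal takes the same route as the paper: the paper's proof consists solely of citing \cite{AMR} Th\'eor\`eme~1.1 and \cite{MR} \S 5.3, and, like you, it does not reprove the comparison. One caution on your sketch: for $n>3$ the group ${\SO}(L\otimes\R)\simeq{\SO}(2,n)$ is not quasi-split, so $\Pi(\psi)$ and $\iota_\infty$ are the inner-form objects of \cite{Taibi_AMF}, \cite{2024_Ishimoto_Arthur}; in their character identities the $s=1$ term is a transfer from the split inner form, and the normalization comes from the pure inner twist (the element $t_0$) rather than from a Whittaker datum and a ``generic'' base point on ${\SO}(2,n)$ itself, so your first identity and your Step~3 have to be read in that setting.
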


\begin{proof}
This is proved in \cite{AMR} Th\'{e}or\`{e}me 1.1 and \cite{MR} \S 5.3. 
\end{proof}

For the computation in the next \S \ref{ssec: LWM AJ}, 
we recall a description of 
$\Pi^{\mathrm{AJ}}(\psi)$ and $\iota_{\infty}^{\mathrm{AJ}}$ 
following \cite{CR} Appendix A.
Let $T_c$ be the maximal torus of $\SO_{n+2}(\R)$ defined by
    \[
    T_c = \{\diag(1, r_{\theta_1},\ldots,r_{\theta_{(n+1)/2}}) \mid \theta_1, \ldots, \theta_{(n+1)/2} \in \bbR\}, \quad r_{\theta} =
    \begin{pmatrix}
        \cos\theta & \sin\theta\\
        -\sin\theta & \cos\theta
    \end{pmatrix}
    \]
and $T\subset \SO_{n+2}(\C)$ be the complexification of $T_c$. 
We consider the group
\begin{align*}
T[2] 
& := \{t \in T \mid t^2 =1\}\\
& = \{\diag(1,\pm 1_2, \ldots,\pm 1_2)\} \isom (\bbZ/2\bbZ)^{(n+1)/2}.
\end{align*}
For each $t \in T[2]$, we can associate 
a pure inner form $G_t\subset \SO_{n+2}(\C)$ of $\SO_{n+2}(\R)$. 
We have $G_t\simeq \SO(p,q)$ 
where $p/2$ is the number of $-1_2$ in $t$ 
and $p+q = n+2$. 
In particular, our $\SO(L\otimes \R)$ corresponds to 
an element of $T[2]$ with only one $-1_2$ component. 
We choose one such element, say $t_0$. 

Let $\psi$ be an Adams-Johnson parameter. 
It determines a Levi subgroup $L_{\psi}$ of $\SO_{n+2}(\C)$. 
For each $t\in T[2]$, 
$L_t := G_t \cap L_{\psi}$ is a real form of $L_{\psi}$ 
containing $T_c$. 
The Lie algebra of $L_t$ is described as 
    \begin{equation}\label{descrip_l_t}
    \frakl_{t} \simeq 
    \fraku(p_1, q_1) \times \cdots \times \fraku(p_r,q_r) \times \mathfrak{so}(p_0,q_0)
    \end{equation}
where $p_1$ is the number of $-1_{2}$ 
in the first $d_1$ components in $t$, 
$p_2$ is that in the next $d_2$ components and so on, 
and $p_0/2$ is that in the last $d_0/2$ components. 
$q_i$ are determined by 
$p_i+q_i = d_i$ for $i>0$ 
and $p_0+q_0=d_0+1$. 
See, e.g., \cite{Atobe_A_packets} \S 9. 

For each $t\in T[2]$, let 
\begin{equation}\label{eqn: pit}
\pi_t = A_{\mathfrak{q}_{t}}(\lambda-\rho)
\end{equation} 
be the cohomological induction to $G_t$ 
from the character of $L_t$ whose restriction to $T_c$ 
is $\lambda-\rho$, 
where $\lambda$ is the vector \eqref{eqn: inf chara} of $\C^{(n+1)/2}$ 
(not modulo $W$) 
and $\rho$ is the Weyl vector \eqref{eqn: inf chara trivial}. 
Here $\mathfrak{q}_t$ is the (standard) 
$\theta$-stable parabolic subalgebra of 
$\mathfrak{so}_{n+2}(\C)$ with Levi part  
$(\mathfrak{l}_{t})_{\C}$. 
Then $\Pi^{\mathrm{AJ}}(\psi)$ is defined as 
\[
\Pi^{\mathrm{AJ}}(\psi) = 
\{ \pi_{w(t_0)} \mid  w \in W \}, 
\]
where $\pi_{w(t_0)}$ is regarded as a representation of 
$\SO(L\otimes \R)$ via the isomorphism 
$\SO(L\otimes \R)\simeq G_{t_0}\simeq G_{w(t_0)}$.

Let $\widehat{T}$ be the centralizer of 
$\varphi_{\psi}({\C}^{\times})$ in $\Sp_{n+1}(\bbC)$.
We have a natural pairing between $T[2]$ and $\widehat{T}[2]$. 
The component group $\calS_\psi$ is realized as 
a subgroup of $\widehat{T}[2]$ (\cite{CR} Lemma A.4). 
For $t\in T[2]$, we denote by $\tau_0(t)$ 
the corresponding character $\widehat{T}[2]\to \Z/2\Z$. 
Then $\iota_\infty^\mathrm{AJ}$ is given by 
$$
\iota_\infty^\mathrm{AJ}(\pi_t) = (\tau_0(t) - \rho^\vee)|_{\calS_\psi}, 
$$
where $\rho^\vee$ is the Weyl vector of $\Sp_{n+1}(\bbC)$.

Finally, we go back to global $A$-parameters and 
recall the explicit description of 
the localization map $\Delta_\infty$.
Let 
\[
\psi = \bigoplus_{i=1}^r (\pi_i, d_i), \qquad \calS_\psi = \bigoplus_{i=1}^r (\bbZ/2\bbZ) e_{i}
\]
be a global $A$-parameter and its component group.
Suppose that the localization $\psi_\infty$ at $p=\infty$ 
is Adams-Johnson.
We write $\psi_\infty$ and its component group as
\[
\psi_\infty = \bigoplus_{j=1}^t \sigma_j \boxtimes {\rm Sym}^{m_j-1}, \qquad \calS_{\psi_\infty} = \bigoplus_{j=1}^t (\bbZ/2\bbZ)e_{j, \infty}
\]
where $\sigma_j$ are irreducible representations of $W_\bbR$. 
The localization defines a partition 
$\{1,\ldots,t\}=I_1 \sqcup \cdots \sqcup I_r$ such that 
the $L$-parameter for $\pi_{i,\infty}$ is 
$\bigoplus_{j \in I_i} \sigma_j$ and $m_j = d_i$ for $j \in I_i$. 
Thus we have 
\[
\psi_{\infty} \isom \bigoplus_{i=1}^r 
 \bigoplus_{j \in I_i} \sigma_j  
\boxtimes {\rm Sym}^{d_i-1}. 
\]
Then the map $\Delta_\infty$ is given by
\[
\Delta_\infty(e_{i}) = \sum_{j \in I_i} e_{j,\infty}.
\]

\subsection{Special lowest weight modules}\label{ssec: LWM AJ}

Let $n/2< k < n$. 
Recall from \S \ref{sec: auto rep to hol form} and 
\S \ref{sec: adelize} that 
$L(\wedge^{n-k}, -k)$ stands for 
the irreducible lowest weight module of weight  
\[
(\underbrace{1,\ldots,1}_{n-k}, \underbrace{0,\ldots,0}_{k-(n+1)/2}, -k)
\]
for $\SO^{+}(2, n)$ (and also for $\SO(2, n)$).   

\begin{lemma}\label{lem: inf chara}
$L(\wedge^{n-k}, -k)$ is unitarizable and 
its infinitesimal character is given by \eqref{eqn: inf chara trivial}, 
the same as that of the trivial representation.
\end{lemma}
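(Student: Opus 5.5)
We prove the two assertions separately: the infinitesimal character by a direct weight computation, and unitarizability by identifying $L(\wedge^{n-k}, -k)$ with a member of an Adams-Johnson (cohomologically induced) packet, or alternatively via the Enright-Howe-Wallach classification of unitary highest weight modules.

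\textbf{Infinitesimal character.} We fix the compact Cartan subalgebra $\mathfrak{t}\subset\frakk$, using the coordinates in which the $\SO(2)$-factor is the last coordinate, as in \eqref{eqn: lowest weight I}. We take the positive system in which the noncompact roots of $\frakp_{-}$ are positive, so that $L(\wedge^{n-k},-k)$ is a highest weight module of highest weight
$$\Lambda=(1,\ldots,1,0,\ldots,0,-k)$$
in the sense of \cite{Hu}. Its infinitesimal character is then $\Lambda+\rho_{\mathfrak g}$ modulo $W$, where $\rho_{\mathfrak g}$ is half the sum of these positive roots. The roots of $\mathfrak{so}(n+2)$ on the compact torus are $\pm e_i\pm e_j$ and $\pm e_i$. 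For $\SO(n)\times\SO(2)$ with $n$ odd, the compact roots are those of $B_{(n-1)/2}$ in the first coordinates. The noncompact roots are $\pm e_i\pm e_0$ and $\pm e_0$, where $e_0$ is the $\SO(2)$-coordinate, and the $\frakp_-$ ones are $-e_0\pm e_i$ and $-e_0$. Hence
$$\rho_{\mathfrak g}=\bigl(\tfrac{n-2}{2},\tfrac{n-4}{2},\ldots,\tfrac12;\,-\tfrac n2\bigr).$$
Adding $\Lambda$ gives the first $n-k$ entries $\tfrac n2,\tfrac n2-1,\ldots,\tfrac n2-(n-k)+1$, then the remaining half-integers down to $\tfrac12$ with the value $\tfrac{2k-n}{2}$ skipped, and the last entry $-\tfrac n2-k+\ldots$. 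The skipped value should be accounted for by the $\SO(2)$-coordinate: we need $-k+(\text{that coordinate of }\rho)$ to equal $\pm\tfrac{2k-n}{2}$. With $\rho_0=-n/2$ and the appropriate normalization of the $\SO(2)$-weight (the weight $k$ of $\mathcal L^{\otimes k}$ corresponds to $k\,e_0$), we get $-k+\tfrac n2=-\tfrac{2k-n}{2}$. After sign changes and permutation in $W$, the multiset of absolute values is exactly $\{\tfrac n2,\ldots,\tfrac12\}$, which is \eqref{eqn: inf chara trivial}. The main care needed here is bookkeeping of sign and normalization conventions; the computation itself is routine.

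\textbf{Unitarizability.} We realize $L(\wedge^{n-k},-k)$ inside an Adams-Johnson packet. We take
$$\psi=\rho_{k_1}\boxtimes\Sym^{0}\oplus\sgn^\delta\boxtimes\Sym^{d_0-1}$$
with $d_0=n-1$ and $k_1=n$, whose infinitesimal character \eqref{eqn: inf chara} is $(\tfrac n2,\tfrac{n-2}{2},\ldots,\tfrac12)$, as required. One then checks that some $\pi_t=A_{\frakq_t}(\lambda-\rho)=A_{\frakq_t}(0)$ is our module. The candidate is $\frakl_t\simeq\fraku(1,0)\times\mathfrak{so}(p_0,q_0)$ with $p_0=2$ at the appropriate position; for varying $k$ the position of the $\fraku(1)$-factor relative to the noncompact coordinate varies, so it is better to take $\psi$ of the form $\rho_{k_1}\boxtimes\Sym^{d_1-1}\oplus\sgn^\delta\boxtimes\Sym^{d_0-1}$ adapted to $k$. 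Since $A_\frakq(0)$ modules with $\lambda$ in the good range are unitary (Vogan-Wallach), unitarity follows once we identify the lowest $K$-type $\wedge^{n-k}\boxtimes\chi^{-k}$ and check that $\frakq_t$ is holomorphic, i.e. $\fraku\cap\frakp\subset\frakp_+$ or $\frakp_-$. An $A_\frakq(0)$ with $\frakq$ holomorphic is a highest weight module, determined by its lowest $K$-type.

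Identifying the lowest $K$-type of $A_{\frakq}(0)$ as $2\rho(\fraku\cap\frakp)$ and matching it with $(\wedge^{n-k},-k)$ is the main obstacle. If this becomes messy, we would instead use the Enright-Howe-Wallach criterion \cite{EHW_83} directly. For $\SO(2,n)$, a scalar-plus-$\wedge^\ell$ highest weight is unitary when the $\SO(2)$-weight $k$ is at or beyond the first reduction point, which here is $n-\ell$. With $\ell=n-k$ we have $k=n-\ell$, the boundary (last unitary) point, so the module is unitarizable.
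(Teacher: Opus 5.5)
Your infinitesimal-character computation follows the paper's route (weight plus $\rho$, then a Weyl group move), but the decisive step is wrong as written. The discrepancy is a sign error, not a matter of normalizing the $\SO(2)$-weight.

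In the conventions that define $L(\wedge^{n-k},-k)$, $\mathcal{L}$ has weight $e_0$ (the paper's $e_{(n+1)/2}$), and $\frakp_+$, the holomorphic tangent space $\mathcal{E}\otimes\mathcal{L}^{-1}$, has highest weight $(1,0,\ldots,0,-1)$. So the roots of $\frakp_-$ are $\pm e_i+e_0$ and $e_0$, as in \eqref{eqn: root p+}. You assigned them the opposite sign in $e_0$.

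Hence the Weyl vector of the positive system containing $\Delta(\frakp_-)$ is $\rho=(\frac{n-2}{2},\ldots,\frac12;\frac n2)$. The $\SO(2)$-entry of $\Lambda+\rho$ is then honestly $\frac n2-k=-\frac{2k-n}{2}$, which is exactly the value you found missing.

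With your $\rho_0=-\frac n2$, the entry is $-k-\frac n2$, whose absolute value exceeds $\frac n2$. Then $\Lambda+\rho$ would \emph{not} be $W$-conjugate to \eqref{eqn: inf chara trivial}. Writing ``$-k+\frac n2$'' immediately after ``$\rho_0=-\frac n2$'' is an arithmetic contradiction.

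Put differently, with your root signs the vector killed by $\frakp_-$ would have to carry $\SO(2)$-weight $+k$. The sign of $\Delta(\frakp_\pm)$ in $e_0$ and the sign of the lowest $K$-type are coupled, and mixing them describes a different module. Once the signs are consistent, your computation is literally the paper's.

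For unitarizability, the paper simply cites \cite{EHW_83}. That is your fallback ($\ell+k=n$ is the first reduction point), so on that route you agree with the paper.

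Your primary route through $A_{\frakq}(0)$ is genuinely different and does work, but you did not carry it out. The identification you flagged as the main obstacle is exactly the paper's later Lemma \ref{lem_coh_ind}, and your first candidate fails. With $d_1=1$, $k_1=n$, $d_0=n-1$ you get $\frakl\simeq\fraku(1,0)\oplus\mathfrak{so}(0,n)$, and $A_{\frakq}$ is then the lowest weight module of scalar weight $n$. That is the canonical-form case $k=n$, outside the range.

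The correct choice is $d_1=n-k+1$, $k_1=k$, $d_0=2k-n-1$, i.e. $\frakl\simeq\fraku(1,n-k)\oplus\mathfrak{so}(0,2k-n)$, arranged so that $\fraku\cap\frakp_{\C}\subset\frakp_+$. For this choice $2\rho(\fraku\cap\frakp_{\C})=(1,\ldots,1,0,\ldots,0,-k)$ with $n-k$ ones. No $A$-parameter is needed here, only the $\theta$-stable parabolic.

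What this route buys you: Vogan's unitarizability theorem for $A_{\frakq}(\lambda)$ (as recorded in \cite{VZ}) gives unitarity and the infinitesimal character $0+\rho=\rho$ at the same time. That would have spared you the sign bookkeeping entirely. The cost is the $K$-type analysis of \cite{VZ} Theorem 2.5, whereas the EHW citation is one line. The paper needs that analysis later anyway, in Proposition \ref{lemma_Adams_Johnson_packets}.
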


\begin{proof}
    The unitarizability follows from \cite{EHW_83}. 
    The infinitesimal character of a lowest weight module is 
    the sum of the lowest weight and the Weyl vector. 
    In our case, this is calculated as 
    \begin{align*}
    &(\underbrace{1,\ldots,1}_{n-k}, \underbrace{0,\ldots,0}_{k-(n+1)/2}, -k) + (n/2-1,\ldots, 1/2, n/2) \\
    &= (\underbrace{n/2,\ldots,k-n/2+1}_{n-k}, \underbrace{k-n/2-1,\ldots,1/2}_{k-(n+1)/2},n/2-k).
    \end{align*}
    By the Weyl group action, 
    this can be transformed to \eqref{eqn: inf chara trivial}.
\end{proof}

We study $L(\wedge^{n-k}, -k)$ in terms of cohomological induction.
Let $A_{\mathfrak{q}}=A_{\mathfrak{q}}(0)$ 
be the cohomological induction 
to $\SO(2,n)$ induced from the trivial character
of a $\theta$-stable standard parabolic subalgebra 
$\mathfrak{q} = \mathfrak{l}_{\C} \oplus \mathfrak{u}$ of 
$\mathfrak{so}_{n+2}(\bbC)$. 

\begin{lemma}\label{lem_coh_ind}
    Suppose that $\frakl$ is isomorphic to
    \begin{equation}\label{eqn: special l}
    \fraku(1,d_1-1) \oplus \fraku(0,d_2) \oplus \cdots \oplus \fraku(0,d_r)\oplus \mathfrak{so}(0, d_0+1).
    \end{equation}
    Then $A_\frakq \isom L(\wedge^{n-k}, -k)$ 
    where $k=n+1-d_1$. 
\end{lemma}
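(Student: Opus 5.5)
We identify $A_{\mathfrak{q}}$ as a unitary highest (lowest) weight module by means of the standard facts on cohomologically induced modules in the good range (Vogan--Zuckerman). We then show that its lowest $K$-type is $(\wedge^{n-k}, -k)$ and that it is annihilated by $\mathfrak{p}_{-}$.

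\textbf{Step 1: the parabolic.} Write $\mathfrak{q}=\mathfrak{l}_{\C}\oplus\mathfrak{u}$ in terms of the torus $T_c$. With coordinates $\epsilon_1,\dots,\epsilon_{(n+1)/2}$, the first $d_1$ coordinates form the $\mathfrak{u}(1,d_1-1)$ block, and the noncompact coordinate (the $\mathfrak{so}(2)$ direction) lies inside that block. All remaining blocks are compact, so $\mathfrak{l}\cap\mathfrak{p}$ is the noncompact part of $\mathfrak{u}(1,d_1-1)$. The key point is that, after conjugating by a Weyl element if needed, we may take $\mathfrak{u}\cap\mathfrak{p}\subset\mathfrak{p}_{+}$ or $\mathfrak{p}_{-}$ (whichever matches the lowest-weight convention). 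This holds because the noncompact coordinate $\epsilon_{(n+1)/2}$ is the extreme one in its $\mathfrak{u}(1,d_1-1)$ block, and the roots $\pm\epsilon_{(n+1)/2}\pm\epsilon_j$ lying in $\mathfrak{u}$ all have the same sign on the $\mathfrak{so}(2)$ generator.

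\textbf{Step 2: lowest $K$-type.} By Vogan--Zuckerman, $A_{\mathfrak{q}}(0)$ is nonzero and irreducible, and its lowest $K$-type has highest weight $2\rho(\mathfrak{u}\cap\mathfrak{p})$. We compute this sum of the noncompact roots in $\mathfrak{u}$. The roots $\epsilon_{\mathrm{nc}}\pm\epsilon_j$ with $j$ outside the first block contribute $(2\cdot\#)\epsilon_{\mathrm{nc}}$, plus $\pm$ terms that cancel. The roots $\epsilon_{\mathrm{nc}}+\epsilon_j$ with $j$ inside the first block, together with $\epsilon_{\mathrm{nc}}$ itself from the short root, supply the remaining part. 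Carried out carefully, this gives the weight $(1^{d_1-1},0,\dots,0;\,-(n+1-d_1))$ up to the sign convention. Since $\ell=d_1-1=n-k$ when $k=n+1-d_1$, this matches $(\wedge^{n-k},-k)$.

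\textbf{Step 3: conclusion.} By Lemma \ref{lem: inf chara}, the infinitesimal character of $A_{\mathfrak{q}}(0)$ is $\rho$, which equals that of $L(\wedge^{n-k},-k)$. Because $\mathfrak{u}\cap\mathfrak{p}\subset\mathfrak{p}_{+}$, the module $A_{\mathfrak{q}}$ is a highest weight module in the sense of \S \ref{ssec: LWM}: $\mathfrak{p}_{-}$ kills the lowest $K$-type. This follows from the general fact that for such $\mathfrak{q}$, $A_{\mathfrak{q}}$ is a unitary highest weight module, which one can deduce from the $K$-types being contained in $\text{lowest}\otimes S(\mathfrak{u}\cap\mathfrak{p})$. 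An irreducible module generated by a vector of weight $(\wedge^{n-k},-k)$ killed by $\mathfrak{p}_{-}$ is the irreducible quotient $L(\wedge^{n-k},-k)$.

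The main obstacle is Step 1 and the sign bookkeeping: we must check that the choice of which coordinate is noncompact within the $\mathfrak{u}(1,d_1-1)$ block forces $\mathfrak{u}\cap\mathfrak{p}$ into a single $\mathfrak{p}_{\pm}$ and yields exactly the exponent $-k$ rather than $+k$. Here we will follow the conventions of \cite{CR} Appendix A, passing to the conjugate $K_\infty^{-}$-translate if necessary, as in \S \ref{sec: adelize}.
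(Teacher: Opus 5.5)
Your proposal is correct and follows essentially the same route as the paper. You reduce, via the $K_{\infty}^{-}$-conjugate (the paper's Mackey decomposition of the restriction to $\SO^{+}(2,n)$), to $\fraku\cap\frakp_{\C}\subset\frakp_{+}$; you use Vogan--Zuckerman's description of the $K$-types of $A_{\frakq}$ to get a lowest weight module generated by the $K$-type $2\rho(\fraku\cap\frakp_{\C})$; and you compute this to be $(1^{d_1-1},0,\ldots,0,-(n+1-d_1))$, exactly as in \eqref{comp_u_cap_p}. The only loose point is Step 1, where your justification restates the claim: the real reason $\fraku\cap\frakp_{\C}$ lies in a single $\frakp_{\pm}$ is that the noncompact coordinate belongs to the first block, where the element defining $\frakq$ has maximal absolute value, so every noncompact root that is positive on it has the same sign in the $\mathfrak{so}(2)$ coordinate.
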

\begin{proof}
    By Mackey theory, the restriction of $A_{\frakq}$ 
    to $\SO^+(2,n)$ is the direct sum of cohomological inductions 
    induced from $\frakq$ and its opposite.
    Hence, we may assume 
    $\fraku \cap \frakp_\bbC \subset \frakp_+$.
    By the description of $K_\infty^+$-types of $A_\frakq$ 
    in \cite{VZ} Theorem 2.5, $A_\frakq$ is a lowest weight module 
    of weight $2 \rho(\fraku \cap \frakp_\bbC)$.
    Here $2 \rho(\fraku \cap \frakp_\bbC)$ is the sum of roots in 
    $\fraku \cap \frakp_\bbC$.
    The adjoint action of $K_\infty^+$ on $\frakp_+$ and $\frakp_-$ has highest weights $(1,0,\ldots,0,-1)$ and $(1,0,\ldots,0,1)$ 
    (cf.~ \S \ref{sec: auto rep to hol form}).
    Let $e_i\in \bbC^{(n+1)/2}$ be the $i$-th vector 
    in the standard basis. 
    The roots of $\frakp_+$ and $\frakp_-$ are given by
    \begin{equation}\label{eqn: root p+}
    \Delta(\frakp_+) = \{\pm e_i - e_{(n+1)/2} \mid 1 \leq i \leq (n-1)/2\} \cup \{- e_{(n+1)/2}\}, 
    \end{equation}
    \[
    \Delta(\frakp_-) = \{\pm e_i + e_{(n+1)/2} \mid 1 \leq i \leq (n-1)/2\} \cup \{e_{(n+1)/2}\}.
    \]
    By the assumption on $\frakl$, we have 
    \begin{align}\label{comp_u_cap_p}
    \Delta(\fraku\cap \frakp_{\C}) 
    &= \{e_j - e_{(n+1)/2} \mid 1 \leq j \leq (n-1)/2\} \\
    &\qquad \cup 
\{-e_j - e_{(n+1)/2} \mid d_1 \leq j \leq (n-1)/2\} \cup \{-e_{(n+1)/2}\}. \notag
    \end{align}
    It follows that $2\rho(\fraku \cap \frakp_{\C})$ is given by 
    \begin{align*}
    & \sum_{i=1}^{d_1-1}(e_i-e_{(n+1)/2}) + \sum_{j=d_1}^{(n-1)/2}((e_j-e_{(n+1)/2})+ (-e_j-e_{(n+1)/2})) - e_{(n+1)/2}\\
    &= e_1 + \cdots + e_{d_1-1} - (n-d_1+1)e_{(n+1)/2}\\
    &= (\underbrace{1,\ldots,1}_{n-k},0,\ldots,0, -k).
    \end{align*}
    Hence $A_\frakq \isom L(\wedge^{n-k}, -k)$.
\end{proof}

By combining this computation with 
the construction of Adams-Johnson packets, 
we obtain the following.

\begin{proposition}\label{lemma_Adams_Johnson_packets}
Let $n/2<k<n$. 
        Suppose that $\psi$ is an Adams-Johnson parameter of the form (\ref{def_Adams_Johnson}) with $d_1 = n-k+1$ such that the infinitesimal character of $\psi$ is given by \eqref{eqn: inf chara trivial}. 
        Then $\Pi(\psi)$ contains $L(\wedge^{n-k}, -k)$, 
        and the character $\eta$ corresponding to $L(\wedge^{n-k}, -k)$ is given by 
        \begin{equation}\label{eqn: etaAJ}
        \eta(e_{i,\infty})
        =
        \begin{cases}
            (-1)^{[(d_0+2)/4]} & \text{if $i=0$;}\\
            (-1)^{1+d_1(d_1+1)/2+((n+1)/2-d_1)d_1} & \text{if $i=1$;}\\
            (-1)^{d_i(d_i+1)/2 + ((n+1)/2-d_{\leq i})d_i} &\text{if $i>1$.}
        \end{cases}
        \end{equation}
        Here $d_{\leq i} = \sum_{j=1}^i d_j$. 
        Moreover, we have $\eta(z_\psi) = 1$. 
\end{proposition}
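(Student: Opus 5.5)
The plan is to find an element $t\in W(t_0)$ for which $\frakl_t$ has the shape \eqref{eqn: special l}, identify $\pi_t$ with $L(\wedge^{n-k},-k)$ via Lemma \ref{lem_coh_ind}, and then compute $\iota^{\mathrm{AJ}}_\infty(\pi_t)=(\tau_0(t)-\rho^\vee)|_{\calS_\psi}$ directly.

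\emph{Step 1: existence.} Since the infinitesimal character of $\psi$ equals the Weyl vector \eqref{eqn: inf chara trivial}, the parameter $\lambda$ of \eqref{eqn: inf chara} is $W$-conjugate to $\rho$. The entries of $\lambda$ are distinct positive half-integers forming a permutation of $(n/2,\dots,1/2)$, so $\lambda=\rho$ as vectors, because the blocks are arranged decreasingly. Thus $\lambda-\rho=0$, and each $\pi_t=A_{\frakq_t}$ is induced from the trivial character. Take $t$ with its single $-1_2$ placed in the first coordinate. By the recipe \eqref{descrip_l_t} this gives $(p_1,q_1)=(1,d_1-1)$, $p_i=0$ for $i>1$, and $p_0=0$, which is exactly \eqref{eqn: special l}. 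Since $t$ has a single $-1_2$, it lies in the $W$-orbit of $t_0$. Lemma \ref{lem_coh_ind} with $d_1=n-k+1$ then gives $\pi_t\simeq L(\wedge^{n-k},-k)$, so this module lies in $\Pi^{\mathrm{AJ}}(\psi)=\Pi(\psi)$.

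\emph{Step 2: the character.} By the theorem of \cite{AMR}, \cite{MR}, $\eta=(\tau_0(t)-\rho^\vee)|_{\calS_\psi}$. To use this I realize $e_{i,\infty}\in\widehat T[2]$ as in \cite{CR} Lemma A.4, namely as $-1$ on the $2d_i$ (or $d_0$) coordinates of $\C^{n+1}$ attached to the $i$-th block. The pairing $\tau_0(t)(e_{i,\infty})$ is then the parity of the number of coordinates where both are nontrivial. This is $1$ for $i=1$ and $0$ otherwise, which accounts for the extra factor $(-1)^1$ in the case $i=1$.

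The term $\rho^\vee(e_{i,\infty})$ is the pairing of $\rho^\vee=((n+1)/2,\dots,1)$ with the block coordinates, i.e.\ the parity of the sum of the entries of $\rho^\vee$ in positions $d_{<i}+1,\dots,d_{\le i}$. That sum is
\[
\sum_{m=1}^{d_i}\bigl((n+1)/2-d_{<i}-m+1\bigr)=((n+1)/2-d_{\le i})d_i+d_i(d_i+1)/2 .
\]
This matches the stated exponent for $i\ge1$. For $i=0$ the entries are $d_0/2,\dots,1$, so the sum is $\tfrac{d_0}{2}(\tfrac{d_0}{2}+1)/2$. Writing $d_0=2a$, its parity is that of $a(a+1)/2$, which I will check equals $[(d_0+2)/4]=[(a+1)/2]$ mod $2$ by cases on $a$ mod $4$.

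\emph{Step 3: $\eta(z_\psi)=1$.} Summing, $\rho^\vee(z_\psi)$ is the parity of $\sum_{m=1}^{(n+1)/2}m$, and $\tau_0(t)(z_\psi)=1$. Hence $\eta(z_\psi)=(-1)^{1+(n+1)(n+3)/8}$, and for $n\equiv1,3\bmod 8$ one checks $(n+1)(n+3)/8$ is odd: $n=8m+1$ gives $(4m+1)(8m+4)/4\cdot$, i.e.\ $(4m+1)(2m+1)$, odd; $n=8m+3$ gives $(2m+1)(4m+3)$, odd. Alternatively, I would cross-check via the general fact that $\eta(z_\psi)$ detects the pure inner form.

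The main obstacle I expect is the sign conventions in Step 2: the identification of $\calS_\psi$ inside $\widehat T[2]$, the direction of the pairing, and the precise $\rho^\vee$-shift. I would settle these by checking against the trivial-representation case in \cite{CR} Appendix A before trusting the final formula.
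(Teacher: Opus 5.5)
Your proposal is correct and follows the paper's proof essentially step for step. The paper also takes $t_0=\diag(1,-1_2,1,\ldots,1)$ and uses the infinitesimal-character hypothesis to get $\pi_{t_0}=A_{\frakq_{t_0}}$ with $\frakl_{t_0}$ of shape \eqref{eqn: special l}. It then applies Lemma \ref{lem_coh_ind} and evaluates $\tau_0(t_0)-\rho^\vee$ on the block sums $\sum_j\theta_j$ exactly as you do, ending with the same check that $(-1)^{1+(n+1)(n+3)/8}=1$. Your explicit argument that $\lambda=\rho$ as vectors (a strictly decreasing sequence of positive half-integers) just makes precise a step the paper leaves implicit.
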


\begin{proof}

    Let $t_0 = \diag(1,-1_2, 1,\ldots,1) \in T[2]$. 
    By \eqref{eqn: pit}, our assumption on the infinitesimal character
    implies that $\pi_{t_0}=A_{\mathfrak{q}_{t_0}}$, i.e., 
    induced from the trivial character. 
    By \eqref{descrip_l_t}, our choice of $t_0$ implies that 
    $\mathfrak{l}_{t_0}$ is of the form \eqref{eqn: special l}. 
    Thus we can apply Lemma \ref{lem_coh_ind} 
    to see that $\pi_{t_0}\simeq L(\wedge^{n-k}, -k)$.

    Next we compute the corresponding character. 
    Let us describe $\tau_0(t_0)$ and $\rho^\vee$ 
    in terms of $\widehat{T}$. 
    According to the given expression of $T[2]$, 
    we write 
    \[
    \widehat{T}[2] = \bigoplus_{i=1}^{(n+1)/2} (\Z/2\Z) \theta_{i}. 
    \]
    By our choice of $t_0$, 
    $\tau_0(t_0)$ is given by
    $\theta_1 \mapsto -1$ and $\theta_i\mapsto 1$ for $i>1$. 
    Similarly, since 
    \[
    \rho^\vee = ((n+1)/2, (n-1)/2, \cdots 1), 
    \] 
    the character of $\widehat{T}[2]$ corresponding to $\rho^\vee$ is given by $\theta_i \mapsto (-1)^{(n+3)/2-i}$.
     On the other hand, $\calS_\psi$ is embedded in  
     $\widehat{T}[2]$ by 
    \[
    e_{i,\infty} \mapsto \sum_{j=d_{\leq i-1}+1}^{d_{\leq i}} \theta_j 
    \quad (i>0), \qquad 
    e_{0,\infty} \mapsto \sum_{j=d_{\leq r}+1}^{(n+1)/2} \theta_j. 
    \]
    From this we calculate 
    \[
    \eta(e_{1,\infty}) = (-1)^{1+(n+1)/2+(n-1)/2+ \cdots +(n+3-2d_1)/2} = (-1)^{1 + d_1(d+1)/2+((n+1)/2-d_1)d_1}, 
    \]
    \begin{align*}
    \eta(e_{i,\infty}) 
    &= (-1)^{(n+1 - 2d_{\leq i-1})/2 + (n-1 - 2d_{\leq i-1})/2 + \cdots + (n+3 - 2d_{\leq i})/2} \\
    &= (-1)^{d_i(d_i+1)/2+((n+1)/2-d_{\leq i})d_i}
    \end{align*}
    for $i > 1$, and 
    \begin{equation*}
    \eta(e_{0,\infty}) 
     = (-1)^{d_0/2 + (d_0/2-1) + \cdots +1} 
     = (-1)^{d_0(d_0+2)/8}= (-1)^{[(d_0+2)/4]}
    \end{equation*}
    by $d_0$ even. 
    Finally, we have 
    \[
    \eta(z_{\psi}) = (-1) \cdot (-1)^{(n+1)/2+(n-1)/2 + \cdots + 1} = (-1)^{1+(n+1)(n+3)/8} = 1 
    \]
    by $n\equiv 1, 3 \bmod 8$.
\end{proof}

When the Adams-Johnson parameter $\psi$ does not contain  
a $\textrm{sgn}^{\delta}$ factor, 
the condition that the infinitesimal character of $\psi$ 
is equal to \eqref{eqn: inf chara trivial} amounts to the equalities 
\begin{equation}\label{eqn: AJ equality}
    k_1+d_1= n+1, \quad k_i-k_{i+1}=d_i+d_{i+1} \: \:  (1\leq i < r), 
    \quad k_r=d_r. 
\end{equation}
When there is a $\textrm{sgn}^{\delta}$ factor, 
the last condition is replaced by $k_r=d_r+d_0$:  
\begin{equation}\label{eqn: AJ equality II}
    k_1+d_1= n+1, \quad k_i-k_{i+1}=d_i+d_{i+1} \: \:  (1\leq i < r), 
    \quad k_r=d_r+d_0. 
\end{equation}

For later use in \S \ref{ssec: classify}, 
we also prepare the following. 

\begin{lemma}\label{lem_lowest_degree}
    Let $n \equiv 1 \bmod 8$.  
    If the packet $\Pi(\psi)$ of 
    an Adams-Johnson parameter $\psi$ contains 
    $L(\wedge^{(n-1)/2}, -(n+1)/2)$, then 
    $\psi = \rho_{(n+1)/2} \boxtimes \Sym^{(n-1)/2}$. 
\end{lemma}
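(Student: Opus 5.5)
The plan is to combine Lemma~\ref{lem: inf chara} with the explicit description of $\Pi^{\mathrm{AJ}}(\psi)$ recalled above, and then compare lowest $K_\infty^+$-types. First, every member of $\Pi(\psi)$ has infinitesimal character $\chi_\psi$. Since $L(\wedge^{(n-1)/2}, -(n+1)/2)$ has the infinitesimal character \eqref{eqn: inf chara trivial} of the trivial representation, $\chi_\psi$ is the Weyl vector $\rho$. Hence $\psi$ satisfies \eqref{eqn: AJ equality} or \eqref{eqn: AJ equality II}. In the notation of \eqref{eqn: pit}, $\lambda=\rho$, so every member of the packet has the form $\pi_{w(t_0)} = A_{\mathfrak{q}_{w(t_0)}}(0)$, cohomologically induced from the trivial character.

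Second, by the theorem of Adams--Johnson/Arthur identifying $\Pi(\psi)=\Pi^{\mathrm{AJ}}(\psi)$, our module equals $A_{\mathfrak{q}_t}$ for some $t=w(t_0)$. This $t$ has exactly one block $-1_2$, at some coordinate $a$. I will split into two cases according to where $a$ lies in the block decomposition $d_1,\dots,d_r,d_0$:
\begin{itemize}
\item $a$ lies in the $i$-th unitary block, so $\mathfrak{l}_t$ has the factor $\mathfrak{u}(1,d_i-1)$ in position $i$ and all other factors compact;
\item $a$ lies in the orthogonal block, so the factor is $\mathfrak{so}(2,d_0-1)$.
\end{itemize}
As in the proof of Lemma~\ref{lem_coh_ind}, Mackey theory lets me pass to $\SO^+(2,n)$ and assume $\mathfrak{u}\cap\mathfrak{p}_{\C}\subset \mathfrak{p}_+$ whenever a lowest weight module arises at all. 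By Vogan--Zuckerman \cite{VZ} Theorem 2.5, the lowest $K$-type of $A_{\mathfrak{q}_t}$ is $2\rho(\mathfrak{u}\cap\mathfrak{p}_{\C})$.

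Third, I compute $2\rho(\mathfrak{u}\cap\mathfrak{p}_{\C})$ in each case, generalizing the computation \eqref{comp_u_cap_p}.

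In the unitary case, after applying the Weyl group to move $a$ to the last coordinate, the roots $e_j-e_a$ occur for all $j$. The roots $-e_j-e_a$ occur only for $j$ in the blocks strictly after block $i$, together with $-e_a$. So the weight has the form $(1^{\,m},0,\dots,0,-(n+1-m))$ up to $W_K$. Here $m$ counts the coordinates in block $i$ other than $a$ together with those in the earlier blocks; the earlier blocks must be empty, since otherwise $\mathfrak{u}\cap\mathfrak{p}_{\C}\not\subset\mathfrak{p}_+$ and the module is not of lowest weight type. So $m=d_i-1$, with $i=1$. Matching this with $(\underbrace{1,\dots,1}_{(n-1)/2},-(n+1)/2)$ forces $d_1=(n+1)/2$. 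Then $2\sum d_j + d_0 = n+1$ gives $r=1$ and $d_0=0$, and \eqref{eqn: AJ equality} gives $k_1=n+1-d_1=(n+1)/2$.

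In the orthogonal case, the noncompact factor $\mathfrak{so}(2,d_0-1)$ makes $A_{\mathfrak{q}}$ either not of lowest weight type, or of lowest weight with $\ell$ equal to the number of coordinates in the unitary blocks. This gives $\ell = \sum_{i\ge1} d_i$ and $k=n+1-\ell$, which would require $d_0=0$, a contradiction; or it forces $d_0$ too small to match $\ell=(n-1)/2$. This is where I would use $n\equiv 1 \bmod 8$, through the parity conditions $d_0$ even and $k_r=d_r+d_0\equiv d_r$. These should rule out the remaining numerical coincidences, in particular $d_0=2$.

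The main obstacle is the orthogonal-block case together with the bookkeeping of which Weyl conjugates $w(t_0)$ actually give lowest (rather than highest or non-holomorphic) modules. There I would first try to show directly that $\mathfrak{u}\cap\mathfrak{p}_{\C}$ meets both $\mathfrak{p}_+$ and $\mathfrak{p}_-$ unless $d_0\le 1$, so that $A_{\mathfrak{q}}$ is not a lowest weight module.
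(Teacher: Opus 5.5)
You take a genuinely different route from the paper, and it is viable. The paper uses only the number $\dim(\mathfrak{u}_t\cap\mathfrak{p}_{\C})=n+1-d_{i_0}$, the lowest degree of nonvanishing $(\mathfrak{g},K)$-cohomology. Comparing with the pair $(\rho_{(n+1)/2}\boxtimes\Sym^{(n-1)/2},t_0)$ it gets $d_{i_0}=(n+1)/2$. It then needs $n\equiv 1 \bmod 8$, which makes $(n+1)/2$ odd and so excludes the even orthogonal block. You compare the whole lowest $K$-type $2\rho(\mathfrak{u}\cap\mathfrak{p}_{\C})$. By Vogan--Zuckerman this is the unique lowest $K$-type of $A_{\mathfrak{q}}$, so it is an invariant of the module up to $W_K$ and the sign of the $\mathfrak{so}(2)$-coordinate.

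Your unitary-block case is correct apart from an off-by-one. With no earlier blocks, all $n+1-d_i$ roots of $\mathfrak{u}\cap\mathfrak{p}_{\C}$ have $e_a$-coefficient $-1$ in your normalization. So the weight is $(1^{m},0,\dots,0,-(n-m))$ with $m=d_i-1$, not $-(n+1-m)$; with your formula the match against $-(n+1)/2$ would actually fail. You also do not need the unproved criterion for when $A_{\mathfrak{q}}$ is of lowest weight type. If an earlier block is nonempty, both roots $e_j\pm e_a$ for $j$ in it lie in $\mathfrak{u}\cap\mathfrak{p}_{\C}$ and contribute $2e_j$, so the $\mathfrak{so}(n)$-part is not $(1,\dots,1)$.

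The gap is the orthogonal-block case, which you leave open, and your sketch there is wrong on several points:
\begin{itemize}
\item $A_{\mathfrak{q}_t}$ is never of lowest weight type in this case.
\item Its lowest $K$-type is not of the form $(\wedge^{\ell},-k)$ with $\ell=\sum_{i\ge1}d_i$.
\item The exception to ``$\mathfrak{u}\cap\mathfrak{p}_{\C}$ meets both $\mathfrak{p}_{\pm}$'' is the absence of any unitary block, not $d_0\le 1$; the latter cannot occur once $a$ lies in the orthogonal block.
\item No parity input from $n\equiv 1\bmod 8$ is needed, and ``should rule out'' is not an argument.
\end{itemize}

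The computation you already set up closes the case. Suppose $a$ lies in the orthogonal block.
\begin{itemize}
\item The roots $\pm e_j\pm e_a$ and $\pm e_a$ with $j$ in that block lie in $\mathfrak{l}$.
\item The roots $-e_j\pm e_a$ with $j$ in a unitary block lie in the opposite nilradical.
\end{itemize}
Hence $\Delta(\mathfrak{u}\cap\mathfrak{p}_{\C})=\{e_j\pm e_a : j \text{ in a unitary block}\}$, and $2\rho(\mathfrak{u}\cap\mathfrak{p}_{\C})=2\sum_{j}e_j$, the sum running over the unitary-block coordinates. Its $\mathfrak{so}(2)$-coordinate is $0\neq \pm(n+1)/2$, so this case is impossible; this also covers $d_0=n+1$. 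With this fix your proof is complete and never uses $n\equiv 1\bmod 8$. That is what your finer invariant buys over the paper's dimension count, which cannot distinguish $i_0=0$ with $d_0=(n+1)/2$ from $i_0=1$ with $d_1=(n+1)/2$.
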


\begin{proof}
We write $\psi$ in the form \eqref{def_Adams_Johnson}. 
We shall show that $r=1$, $d_0=0$ in the notation there; 
this implies $d_1=(n+1)/2$, and then 
$k_1=(n+1)/2$ by Lemma \ref{lem: inf chara} and 
\eqref{eqn: AJ equality}. 

By our assumption and Lemma \ref{lem: inf chara}, we have 
$L(\wedge^{[n/2]}, [-n/2])\simeq A_{\mathfrak{q}_t}$ 
for a $\theta$-stable parabolic subalgebra $\mathfrak{q_t}$. 
Let $\mathfrak{u}_t$ be the nilpotent part of $\mathfrak{q}_t$. 
Let $(p_i, q_i)_i$ be as in \eqref{descrip_l_t}. 
Since $\sum_i p_i=1$, we have $p_{i_0}=1$ for some $i_0$ 
and $p_i=0$ for $i\ne i_0$. 

\begin{claim}\label{claim}
   We have $\dim (\fraku_{t} \cap \frakp_{\C})=n+1-d_{i_0}$. 
\end{claim}

\begin{proof}
Similarly to \eqref{comp_u_cap_p}, 
the set of roots of $\fraku \cap \frakp_\bbC$ is calculated as 
\[
\begin{cases}
    \Delta(\frakp_+) \setminus \{e_{j} - e_{(n+1)/2} \mid 
    d_{\leq (i_0-1)} + 1 \leq j \leq d_{i_0}-1\} & \text{if $i_0 > 0$;}\\
    \{\pm e_i - e_{(n+1)/2} \mid 1 \leq i \leq (n+1)/2 - d_0/2\} & \text{if $i_0 =0$,}
\end{cases}
\]
where $d_{\leq 0} = 0$. 
This proves our claim. 
\end{proof}

Recall from \cite{VZ} that 
$\dim (\mathfrak{u}_{t} \cap \frakp_{\C})$ is the minimal degree 
of nonzero Lie algebra cohomology 
$H^{\ast}(\frakg, K_{\infty}^{+}; A_{\mathfrak{q}_{t}})$. 
Combining this with Claim \ref{claim}, 
we see that $d_{i_0}$ is independent of $(\psi, t)$ 
with $L(\wedge^{[n/2]}, [-n/2])\simeq A_{\mathfrak{q}_t}$. 
On the other hand, 
by Proposition \ref{lemma_Adams_Johnson_packets} (and its proof), 
we see that 
$\rho_{(n+1)/2} \boxtimes \Sym^{(n-1)/2}$ and 
$t_0$ is one such pair. 
This implies that $d_{i_0}=(n+1)/2$ for the given $(\psi, t)$. 
By our assumption $n\equiv 1 \bmod 8$, 
$d_{i_0}$ is odd, and so $i_0>0$. 
By the condition $d_0+2\sum_{i=1}^{r} d_i=n+1$ 
in Definition \ref{def_Adams_Johnson}, 
we find that $d_0=0$ and $r=1$. 
\end{proof}

\section{Construction of A-parameters}\label{sec: A-packet}

In this section, we construct global $A$-parameters 
required for the construction of holomorphic differential forms.

\subsection{Criterion}\label{ssec: recipe}

Let $L$ be a lattice as in \eqref{eqn: lattice}.  
Let $\psi=\bigoplus_{i}(\pi_{i}, d_i)$ be a global $A$-parameter of 
${\SO}(L\otimes {\A})$ whose archimedean component 
$\psi_\infty$ is Adams-Johnson. 
We denote by $\psi_{i,\infty}$ the component of $\psi_\infty$ 
associated to $(\pi_{i}, d_{i})$ 
by the procedure \eqref{eqn: A-packet to rep}. 
We normalize the order of the indices $i$ as follows. 
First, if there exists $i$ such that 
$\psi_{i,\infty} = \sgn^\delta \boxtimes \Sym^{d_{i}-1}$, 
we understand this $i$ as $0$. 
In other cases, we understand $i\geq 1$, 
and write $\psi_{i,\infty}$ as 
\[
\psi_{i,\infty} = 
\bigoplus_{j=1}^{r_i} \rho_{k_{i,j}} \boxtimes \Sym^{d_{i}-1} \; 
( {\rm possibly} \: \oplus \sgn^{\delta} \boxtimes \Sym^{d_{i}-1} )
\]
where the indices $j$ are ordered so that
$k_{i,1}>\cdots>k_{i,r_i}$.
Then, the indices $i\geq 1$ are ordered so that $k_{1,1}>k_{2,1}>\cdots$. 
Therefore   
\[
k_{1,1} = \max_{i,j\geq 1} k_{i,j}, 
\]
and this is $k_1$ in \eqref{def_Adams_Johnson}.
In particular, $d_1$ here coincides with 
$d_1$ in \eqref{def_Adams_Johnson}. 

By Proposition \ref{lemma_Adams_Johnson_packets}, 
the local $A$-packet $\Pi(\psi_{\infty})$ of $\psi_{\infty}$ 
contains the irreducible lowest weight module 
$L(\wedge^{n-k}, -k)$ 
if the infinitesimal character of $\psi_{\infty}$ is 
\eqref{eqn: inf chara trivial} and $d_1=n-k+1$. 
In this case, the corresponding character $\eta_{\infty}$ 
of the local component group $\calS_{\psi_{\infty}}$ 
is given by \eqref{eqn: etaAJ}. 
Our recollection in \S \ref{sec: AMT} and computation in 
\S \ref{sec: AJ} were designed to converge to the following criterion. 

\begin{proposition}\label{prop: criterion}
Let $L$ be as in \eqref{eqn: lattice}.   
Let $n/2<k<n$. 
Suppose that we have a global $A$-parameter 
$\psi=\bigoplus_{i}(\pi_{i}, d_i)$ of ${\SO}(L\otimes {\A})$ satisfying the following conditions: 
\begin{enumerate}
    \item The automorphic representation $\pi_{i}$ is unramified for each $i$. 
    \item $\psi_{\infty}$ is an Adams-Johnson parameter
    satisfying \eqref{eqn: AJ equality} or \eqref{eqn: AJ equality II}. 
    \item $d_1=n-k+1$. 
    \item The character $\eta_{\infty}$ of $\calS_{\psi_{\infty}}$ 
    defined by \eqref{eqn: etaAJ} 
    satisfies $\eta_{\infty}\circ \Delta_{\infty}=\varepsilon_{\psi}$. 
\end{enumerate}
Then a smooth projective model of $\mathcal{F}_{n}$ has a nonzero holomorphic $k$-form. 
\end{proposition}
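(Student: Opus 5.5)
The plan is to feed the global $A$-parameter $\psi$ into Theorem \ref{theorem_AMF} with a carefully chosen character $\eta$. The resulting representation $\pi(\psi,\eta)$ should satisfy the hypotheses of Proposition \ref{prop: auto rep to MF}. Specifically, I would take $\eta=\prod_p\eta_p$ with $\eta_p=\mathbf{1}$ for every $p<\infty$ and $\eta_\infty$ as in \eqref{eqn: etaAJ}. Since $\eta_p=\mathbf{1}$ for almost all $p$, this is an admissible choice. The lattices \eqref{eqn: lattice} have class number $1$ and satisfy ${\SO}^{+}(L)\ne{\SO}(L)\ne{\rm O}(L)$, so Proposition \ref{prop: auto rep to MF} applies once we produce a discrete automorphic $\pi$ with $\pi_\infty\simeq L(\wedge^{n-k},-k)$ and $\pi_p^{K_p}\neq 0$ for all $p$. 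Moreover ${\SO}^{+}(L)\backslash\mathcal{D}={\rm O}^{+}(L)\backslash\mathcal{D}=\mathcal{F}_n$, because $-{\rm id}$ acts trivially on $\mathcal{D}$ and ${\rm O}^{+}(L)=\langle{\SO}^{+}(L),-{\rm id}\rangle$ for odd rank.

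The first step is the finite places. By hypothesis (1) and Remark \ref{remark: unramified}, each $\psi_p$ is unramified. Lemma \ref{lemma_A_packet_unramified} then shows that $\pi(\psi_p,\mathbf{1})$ is nonzero and has a nonzero $K_p$-fixed vector, where $K_p={\SO}(L\otimes\Zp)$ is hyperspecial by Lemma \ref{prop: lattice}. Next comes the archimedean place. By hypotheses (2) and (3), $\psi_\infty$ is Adams--Johnson with infinitesimal character \eqref{eqn: inf chara trivial}, since \eqref{eqn: AJ equality} and \eqref{eqn: AJ equality II} are exactly the conditions for that, and $d_1=n-k+1$. Proposition \ref{lemma_Adams_Johnson_packets} then gives $\pi(\psi_\infty,\eta_\infty)\simeq L(\wedge^{n-k},-k)$.

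The key step is the global sign condition $\eta\circ\Delta=\varepsilon_\psi$. Since $\eta_p=\mathbf{1}$ for $p<\infty$, we have
$$\eta\circ\Delta=\prod_p \eta_p\circ\Delta_p=\eta_\infty\circ\Delta_\infty,$$
and hypothesis (4) says precisely that this equals $\varepsilon_\psi$. Theorem \ref{theorem_AMF} then places $\pi=\pi(\psi,\eta)=\otimes_p\pi(\psi_p,\eta_p)$ inside $L^2_{disc}$, so $\pi$ is a discrete automorphic representation. Its components satisfy conditions (1) and (2) of Proposition \ref{prop: auto rep to MF}, and that proposition produces the holomorphic $k$-form.

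The only delicate point is the archimedean identification. We need $\pi(\psi_\infty,\eta_\infty)$ to be $L(\wedge^{n-k},-k)$ in the abusive sense of \S\ref{sec: adelize}, i.e.\ its restriction to ${\SO}^{+}(L\otimes\R)$ contains the lowest weight module. This is handled by the Mackey-theory remark in Lemma \ref{lem_coh_ind}. We also need the Adams--Johnson packet to coincide with Arthur's packet with matching $\iota$, which is the theorem of \cite{AMR}, \cite{MR}. With those in hand, the argument is a bookkeeping assembly of earlier results.
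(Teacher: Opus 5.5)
Your proposal is correct and follows the paper's proof essentially step for step: put $\eta_p=\mathbf{1}$ at every finite place so that $\eta\circ\Delta=\eta_\infty\circ\Delta_\infty=\varepsilon_\psi$, apply Theorem \ref{theorem_AMF}, and take the finite components from Remark \ref{remark: unramified} with Lemma \ref{lemma_A_packet_unramified} and the archimedean component from Proposition \ref{lemma_Adams_Johnson_packets}, then conclude with Proposition \ref{prop: auto rep to MF}. Your added remark that ${\SO}^{+}(L)\backslash\mathcal{D}=\mathcal{F}_n$, because $-{\rm id}$ acts trivially in odd rank, is a detail the paper leaves to its introduction.
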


\begin{proof}
We put $\eta_{p}=\mathbf{1}$ for $p<\infty$ and set $\eta=\prod_{p\leq \infty}\eta_{p}$. 
Then the assumption $\eta_{\infty}\circ \Delta_{\infty}=\varepsilon_{\psi}$ 
implies $\eta\circ \Delta=\varepsilon_{\psi}$. 
Hence 
$$
\pi(\psi, \eta) = \bigotimes_{p<\infty}\pi(\psi_{p}, \mathbf{1}) \otimes L(\wedge^{n-k}, -k)
$$
is automorphic by Theorem \ref{theorem_AMF}. 
Since $\psi$ is unramified, $\psi_{p}$ is unramified for every $p<\infty$ 
by Remark \ref{remark: unramified}. 
Hence $\pi(\psi_{p}, \mathbf{1})$ is unramified for every $p<\infty$ by Lemma \ref{lemma_A_packet_unramified}. 
Thus $\pi(\psi, \eta)$ is unramified with 
archimedean component $L(\wedge^{n-k}, -k)$. 
Then we can apply Proposition \ref{prop: auto rep to MF}.  
\end{proof}

\subsection{The input}\label{ssec: input}

In the rest of \S \ref{sec: A-packet}, 
we will construct global $A$-parameters $\psi$ satisfying the conditions in Proposition \ref{prop: criterion}. 
The cuspidal representations $\pi_{i}$ will be one of the following types: 
\begin{itemize}
\item The trivial character $\mathbf{1}$ of $\GL_1(\bbA)$. 
\item A cuspidal representation $\sigma_{k}$ of $\GL_2(\bbA)$ 
generated by an eigenform of weight $k$ for $\SL_2(\Z)$.  
(\S \ref{subsection_GL_2_autom_rep}) 
\item The symmetric square $\Sym^2 \sigma_{k}$ of $\sigma_k$, 
which is a cuspidal representation of $\GL_3(\bbA)$. 
(\S \ref{subsection_GL_2_autom_rep}) 
\item A cuspidal representation $\tau_{j,k}$ of $\GL_4(\bbA)$ 
associated to a Siegel eigenform of weight $\Sym^j \otimes \det^k$ for $\Sp_4(\Z)$. (\S \ref{subsection_autom_rep_GL_4_5})
\item A cuspidal representation $\xi_{j,k}$ of $\GL_5(\bbA)$ 
associated to a Siegel eigenform of weight $\Sym^j \otimes \det^k$ for $\Sp_4(\Z)$. (\S \ref{subsection_autom_rep_GL_4_5})
\end{itemize}
With these notations, our results can be summarized as follows. 
We begin with some ``easy" $A$-parameters, 
by which we construct $k$-forms for relatively small $k$. 

\begin{proposition}\label{prop: k minimum}
The $A$-parameters in Table \ref{table: simple parameter}
exist and satisfy the conditions in Proposition \ref{prop: criterion} 
with $k=n+1-d_1$. 
In the last case, $\ell$ is an arbitrary integer 
with $\ell \equiv 1-n$ mod $8$ 
in the range $0< \ell \leq (n+1)/3$. 
\end{proposition}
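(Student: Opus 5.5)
The plan is to treat each row of Table \ref{table: simple parameter} separately, in the same way: check existence of the cuspidal inputs, then verify conditions (1)--(4) of Proposition \ref{prop: criterion}. The inputs are taken from the list in \S\ref{ssec: input}: the trivial character $\mathbf{1}$, $\sigma_w$, and for the last case presumably $\Sym^2\sigma_w$. Condition (1) is automatic, since all these representations come from level-one forms and are therefore unramified at every $p<\infty$ (for $\Sym^2\sigma_w$ this is unramified functoriality of Gelbart--Jacquet). The formal requirements of Definition \ref{def: A-packet} also need to be checked row by row:
\begin{itemize}
\item $\sigma_w$ is symplectic, so it must carry an odd $d$;
\item $\mathbf{1}$ and $\Sym^2\sigma_w$ are orthogonal, so they must carry an even $d$;
\item the dimensions must add up to $n+1$;
\item no pair $(\pi_i,d_i)$ may be repeated.
\end{itemize}
These are simple parity and counting checks from $n\equiv 1,3 \bmod 8$ and the stated congruence on $\ell$.

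Next I compute the localization $\psi_\infty$ using the standard $L$-parameters at infinity:
\[
\sigma_{w,\infty} \leftrightarrow \rho_{w-1}, \qquad (\Sym^2\sigma_w)_\infty \leftrightarrow \rho_{2w-2}\oplus \sgn, \qquad \mathbf{1}_\infty \leftrightarrow \sgn^0 .
\]
From this I read off the pairs $(k_i,d_i)$ and $d_0$ of \eqref{def_Adams_Johnson}. I then solve the linear equalities \eqref{eqn: AJ equality} or \eqref{eqn: AJ equality II} for the weights. For example, a single block $(\sigma_w,d_1)$ forces $w-1=k_1=n+1-d_1$. In the minimal case $n\equiv1\bmod8$ this gives $\psi=(\sigma_{(n+3)/2},(n+1)/2)$, consistent with Lemma \ref{lem_lowest_degree}.

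Existence of the inputs reduces to the nonvanishing of $S_w(\SL_2(\Z))$, that is, $w\ge 12$ even with $w\ne 14$. This nonvanishing is exactly what produces the lower bounds $n\ge 25$, $n\ge 33$, etc. For $\Sym^2\sigma_w$ one also needs cuspidality, which holds because level-one forms are not CM. The parity condition $k_i\equiv d_i \bmod 2$ in Definition \ref{def: AJ} follows from $w$ even together with the parity of $d_i$. The row's hypotheses (such as $0<\ell\le (n+1)/3$) should be exactly what is needed for the strict inequalities $k_i-k_{i+1}\ge d_i+d_{i+1}$, and for the ordering that puts the block with $d_1=n-k+1$ first.

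The main step is condition (4), the sign identity $\eta_\infty\circ\Delta_\infty=\vep_\psi$.
\begin{itemize}
\item For $r=1$ it is immediate: $\vep_\psi=\mathbf{1}$, and $\eta_\infty(z_\psi)=1$ by Proposition \ref{lemma_Adams_Johnson_packets}.
\item For several blocks, I evaluate \eqref{eqn: etaAJ} on $\Delta_\infty(e_i)=\sum_{j\in I_i}e_{j,\infty}$ as an explicit power of $-1$ depending on $n,d_i$ modulo small powers of $2$.
\item On the other side, I compute $\vep(\pi_i\times\pi_j)$ via the archimedean root-number recipe of \cite{CL} p.~203. This gives $\vep(\sigma_w\times\mathbf{1})=(-1)^{w/2}$ and the analogous formulas for $\sigma_w\times\Sym^2\sigma_{w'}$ as products of $i^{\cdots}$ factors determined by the Hodge weights.
\end{itemize}
Since $\eta_\infty(z_\psi)=1$ and $\vep_\psi$ is also trivial on the sum of all $e_i$, it suffices to check all but one generator.

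I expect the real obstacle to be this sign bookkeeping, especially in the last family with the parameter $\ell$, where the congruence $\ell\equiv 1-n \bmod 8$ must make the two signs agree. My approach is to reduce every exponent modulo $2$ in terms of $n \bmod 8$ and $\ell \bmod 8$, and tabulate the cases. Once (1)--(4) hold, Proposition \ref{prop: criterion} gives the holomorphic $k$-form with $k=n+1-d_1$.
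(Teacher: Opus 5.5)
Your skeleton (check Definition \ref{def: A-packet}, localize at $\infty$, verify \eqref{eqn: AJ equality} or \eqref{eqn: AJ equality II}, then compare signs) is the same as the paper's. However, it rests on a wrong guess of what Table \ref{table: simple parameter} contains, so one of its three rows is not proved at all. The second row is $(\tau_{(n-3)/4,(n+9)/4},\,(n+1)/4)$ for $n\equiv 3 \bmod 8$. Its input is a vector-valued Siegel cusp form of genus $2$, not an elliptic one. Your existence step (``reduces to the nonvanishing of $S_w(\SL_2(\Z))$'') can neither produce it nor explain its bound $n\ge 35$. What is needed is the following.
First, $(n-3)/4$ is even and $(n+9)/4$ is odd. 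Second, $S_{(n-3)/4,(n+9)/4}\neq 0$ by Table \ref{table_existence_SMF} exactly when $n\ge 35$; for $n=27$ one gets $S_{6,9}$, which is excluded. Third, since $j=(n-3)/4>0$, Lemma \ref{lemma_vect_val_SMF_stable} makes the $A$-parameter of an eigenform simple. Hence $\tau$ is a symplectic cuspidal representation of $\GL_4(\A)$, and the odd multiplicity $d=(n+1)/4$ with $4d=n+1$ is admissible.
Its localization $(\rho_{(3n+3)/4},(n+1)/4)\oplus(\rho_{(n+1)/4},(n+1)/4)$ consists of two archimedean blocks coming from a single global block, and it satisfies \eqref{eqn: AJ equality}. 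Condition (4) holds because $\Delta_\infty(e_1)=e_{1,\infty}+e_{2,\infty}=z_{\psi_\infty}$ and $\eta_\infty(z_{\psi_\infty})=1$. Your ``$r=1$'' argument covers this case only after you notice that $\Delta_\infty(e_1)$ is the whole central element rather than a single generator.

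A second error concerns the last row, which is $(\sigma_{(n+\ell+3)/2},(n+1-\ell)/2)\oplus(\mathbf{1},\ell)$, with no $\Sym^2$. The hypothesis $\ell\le (n+1)/3$ is not what makes the Adams--Johnson conditions hold. Those hold with equality for every even $\ell$ in range, since $k_1=(n+1+\ell)/2=d_1+d_0$. Its real role is in condition (4): it forces $\min(d_1,d_2)=\ell$, which is even, so $\vep_\psi=\mathbf{1}$ by \eqref{def_vep_psi}.
Without it, the exponent would be the odd number $d_1=(n+1-\ell)/2$. Then $\vep(\sigma_w\times\mathbf{1})=(-1)^{w/2}=-1$, because $w=(n+\ell+3)/2\equiv 2\bmod 4$. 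Meanwhile $\eta_\infty=\mathbf{1}$: substitute $d_0=\ell\equiv 1-n \bmod 8$ into \eqref{eqn: etaAJ} and use $\eta_\infty(z_{\psi_\infty})=1$. So the sign identity would fail. With these two corrections your sign bookkeeping reduces to the paper's: $\vep_\psi=\mathbf{1}$ and $\eta_\infty\circ\Delta_\infty=\mathbf{1}$ in all three rows.
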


Next we construct $k$-forms for odd $k$ more systematically. 

\begin{proposition}\label{prop: input k odd}
The $A$-parameters in Table \ref{table: input odd k} exist 
and satisfy the conditions in Proposition \ref{prop: criterion} 
with $k=n+1-d$. 
Here $d$ is an arbitrary integer satisfying 
$$
\begin{cases}
1 < d \leq (n-21)/2,  \; d\equiv n \bmod 4 &  
\quad \textrm{if} \: \:  k\equiv 1 \bmod 4 \\ 
1< d \leq (n-21)/4, \; d \equiv n+2 \bmod 4 & 
\quad \textrm{if} \: \:  k\equiv 3 \bmod 4. 
\end{cases}
$$
The bound of $n$ is so that the set of $d$ satisfying these conditions 
is non-empty. 
\end{proposition}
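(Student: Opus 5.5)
The plan is to verify, for each row of Table \ref{table: input odd k}, the four conditions of Proposition \ref{prop: criterion}. The parameters there are built from level-one objects $\sigma_w$, $\Sym^2\sigma_w$ and $\mathbf{1}$, so condition (1) (unramifiedness) is automatic. The work splits into three parts: existence of the inputs, the Adams--Johnson conditions (2), (3), and the sign identity (4).

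\emph{Archimedean shape.} For $k$ odd we have $d=n+1-k$ odd. The leading block must therefore be $(\pi_1,d)$ with $\pi_1$ symplectic and with $\pi_{1,\infty}$ having parameter $\rho_{k_1}$, where $k_1=n+1-d=k$ by \eqref{eqn: AJ equality II}. The natural choice is $\pi_1=\sigma_{k+1}$, since $\sigma_{w,\infty}\leftrightarrow \rho_{w-1}$. This requires a cusp eigenform of weight $k+1$ for $\SL_2(\Z)$, which exists because $k+1\geq 12$ and $k+1\neq 14$ in our range.

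The rest of the infinitesimal character, namely $(\tfrac{n-2d}{2},\ldots,\tfrac12)$, must be filled. The simplest filling is $(\mathbf{1},n+1-2d)$, whose localization is $\sgn^{\delta}\boxtimes\Sym^{n-2d}$ with $d_0=n+1-2d$ even; then $k_1=d+d_0$ holds, as \eqref{eqn: AJ equality II} requires. The sign condition will generally fail for this two-block parameter, and the bound $d\le (n-21)/2$ (that is, $d_0\ge 22$) suggests how to repair it. I would insert a middle block $(\sigma_{w},d')$ or $(\Sym^2\sigma_w,d')$ with small weight, e.g.\ $\sigma_{12}$. The $k_{i}$ of this block are then forced by the equalities $k_i-k_{i+1}=d_i+d_{i+1}$ and $k_r=d_r+d_0$, and I would check that the forced weights admit level-one eigenforms. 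The congruence on $d$ modulo $4$ is what makes the parities $k_i\equiv d_i \bmod 2$ and the symplectic/orthogonal parity conditions of Definition \ref{def: A-packet} come out right. The tighter range $d\le (n-21)/4$ for $k\equiv 3\bmod 4$ suggests a block of multiplicity growing with $d$, such as $(\sigma_{w},2d)$ or similar, and I would adjust the choice to match that bound.

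\emph{Sign identity.} I would compute $\eta_\infty\circ\Delta_\infty$ from \eqref{eqn: etaAJ}. When a global block splits at infinity (as with $\Sym^2\sigma_w$, which gives $\rho_{2w-2}\oplus\sgn^{\delta}$), $\Delta_\infty$ sums the corresponding local basis elements. On the other side, $\varepsilon_\psi$ is computed from \eqref{def_vep_psi} using the recipe of \cite{CL}: $\varepsilon(\pi_i\times\pi_j)=1$ when $d_i\equiv d_j \bmod 2$, $\varepsilon(\sigma_w\times\mathbf 1)=(-1)^{w/2}$, and $\varepsilon(\sigma_a\times\sigma_b)=(-1)^{1+\max(a,b)/2\cdot 0}\cdots$ given by the archimedean formula $\varepsilon(\rho_a\otimes\rho_b)=(-1)^{\max(a,b)+1}$ type rules. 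Since $\eta_\infty(z_\psi)=1$ by Proposition \ref{lemma_Adams_Johnson_packets}, and $\varepsilon_\psi$ is likewise trivial on the sum of all $e_i$, one block's condition follows from the others. This reduces the check to $r-1$ parity identities in $n,d$ modulo $8$ (for instance $n\equiv 1,3\bmod 8$ and $d\equiv n$ or $n+2\bmod 4$). I expect this bookkeeping, together with choosing the auxiliary block so that the signs match, to be the main obstacle; it may require separate treatment of $n\equiv1$ and $n\equiv3 \bmod 8$. Finally, for the stated bounds on $n$, I would verify that some admissible $d>1$ exists, i.e.\ that the interval contains an integer in the required residue class mod $4$.
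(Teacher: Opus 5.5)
\textbf{There is a genuine gap.} The proposition asserts that the specific parameters of Table~\ref{table: input odd k} satisfy the criterion. Your proposal never writes those parameters down or checks them. Instead it sketches a search for parameters of a different shape, and it misdescribes the table. No row contains $\mathbf{1}$ or $\Sym^2\sigma_w$. For $k\equiv 3 \bmod 4$ the leading block is $(\tau_{n-3d,d+2},d)$, which comes from a vector-valued Siegel cusp form. So existence needs Lemma~\ref{existence_SMF} ($j=n-3d$ even and $\geq 24$, weight $d+2\geq 5$) and Lemma~\ref{lemma_vect_val_SMF_stable} ($j>0$, so the parameter is simple).

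This is not cosmetic: your shape cannot satisfy condition (4) when $k\equiv 3 \bmod 4$. Suppose the block $(\rho_k,d)$ comes from $(\sigma_{k+1},d)$ alone. Then $\Delta_\infty(e_1)=e_{1,\infty}$, and \eqref{eqn: etaAJ} gives $\eta_\infty(e_{1,\infty})=-1$:
for $n\equiv 1 \bmod 8$, $d\equiv 3 \bmod 4$, the exponent is $1+\mathrm{even}+\mathrm{even}$;
for $n\equiv 3 \bmod 8$, $d\equiv 1 \bmod 4$, it is $1+\mathrm{odd}+\mathrm{odd}$.
On the other side, $\varepsilon_\psi(e_1)=1$, because $\varepsilon(\sigma_{k+1}\times\sigma_w)=1$ by parity and $\varepsilon(\sigma_{k+1}\times\mathbf{1})=(-1)^{(k+1)/2}=1$. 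A $\Sym^2$ middle block cannot fix this alongside a $\mathbf{1}$ tail, since an Adams--Johnson parameter has only one $\sgn^\delta$ block.

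The missing idea is that $\pi_{1,\infty}$ need only \emph{contain} $\rho_k$, not equal it. The archimedean parameter of $\tau_{n-3d,d+2}$ is $\rho_{n-d+1}\oplus\rho_{n-3d+1}$. This gives two adjacent Adams--Johnson blocks of size $d$ with gap $2d=d_1+d_2$. Then $\Delta_\infty(e_1)=e_{1,\infty}+e_{2,\infty}$, and \eqref{eqn: etaAJ} gives $-1$ on both, so the product is $1$.

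For $k\equiv 1 \bmod 4$ the paper also uses no $\mathbf{1}$ block. The tail of the infinitesimal character is filled by a symplectic block $(\sigma_w,w-1)$, that is, $k_r=d_r$ in \eqref{eqn: AJ equality}. When $n\equiv 1 \bmod 8$ the naive tail length $(n+1)/2-d$ is even, so $(\sigma_{n+1-2d},1)$ is inserted to shift it to the odd length $(n-1)/2-d$. Consequently every $d_i$ in Table~\ref{table: input odd k} is odd. This gives $\varepsilon_\psi=\mathbf{1}$ outright, and none of the root-number bookkeeping you anticipate is needed. Condition (4) becomes a direct congruence check of $\eta_\infty\circ\Delta_\infty=\mathbf{1}$ from \eqref{eqn: etaAJ}, using $d \bmod 4$ and $n \bmod 8$.

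The bounds $d\leq (n-21)/2$ and $d\leq (n-21)/4$ come from requiring the weight of the tail $\sigma$ to be $\geq 12$ and $\neq 14$, not from $d_0\geq 22$. Your concrete repair is also incompatible with the stated range. For $(\sigma_{k+1},d)\oplus(\sigma_{12},d')\oplus(\mathbf{1},d_0)$, the equalities \eqref{eqn: AJ equality II} force $d'=n-10-2d$ and $d_0=2d+21-n$. So $d_0\geq 0$ requires $d\geq (n-21)/2$, which meets the range only at its endpoint. As it stands, the proposal is a strategy for finding parameters, not a verification of the ones in the statement.
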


The case of even $k$ is more complicated.  

\begin{proposition}\label{prop: input k even}
The $A$-parameters in Table \ref{table: input even k} exist 
and satisfy the conditions in Proposition \ref{prop: criterion} 
with $k=n+1-d$. 
Here $d$ is an arbitrary natural number satisfying 
\begin{equation*}
    \begin{cases}
        1< d \leq  (n+1)/5, \quad d \equiv n+3 \bmod 8, 
        & \quad \textrm{if} \: \:  k\equiv 6 \\ 
        1< d < (n-1)/7, \quad d \equiv n+1 \bmod 8, 
        & \quad \textrm{if} \: \:  k\equiv 0 \\ 
        1< d \leq (n-3)/9, \quad d \equiv 2 \bmod 8, 
        & \quad \textrm{if} \: \: (n, k)\equiv (3, 2) \\ 
        1< d \leq (n-19)/5, \quad d \equiv 0 \bmod 8, 
        & \quad \textrm{if} \: \:  (n, k)\equiv (1, 2) \\ 
        1< d \leq (n-11)/7, \quad d \equiv n-3 \bmod 8, 
        & \quad \textrm{if}  \: \:  k\equiv 4 \\ 
    \end{cases}
\end{equation*}
The bound of $n$ is so that the set of $d$ satisfying these conditions 
is non-empty, except that 
$n\geq 41$ in the case $(n, k)\equiv (1, 0)$. 
\end{proposition}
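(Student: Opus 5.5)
The plan is to treat each row of Table \ref{table: input even k} separately, but in the same way, by checking the four conditions of Proposition \ref{prop: criterion} in turn. Each row is a formal sum $\psi=\bigoplus_i(\pi_i,d_i)$ built from the inputs of \S\ref{ssec: input}. The distinguished summand has $d_1=d$, and the remaining summands ($\sigma_{k'}$, $\Sym^2\sigma_{k'}$, $\tau_{j,k'}$, $\xi_{j,k'}$, $\mathbf{1}$) carry multiplicities chosen so that the archimedean eigenvalues fill up the Weyl vector \eqref{eqn: inf chara trivial}.

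\textbf{Step 1: well-definedness and unramifiedness.} First I check that $\psi$ is a global $A$-parameter in the sense of Definition \ref{def: A-packet}. Each $\sigma_{k'}$ and $\tau_{j,k'}$ is symplectic, and each $\Sym^2\sigma_{k'}$, $\xi_{j,k'}$ and $\mathbf{1}$ is orthogonal. So I must verify that the $d_i$ attached to symplectic inputs are odd and those attached to orthogonal inputs are even. I also check that $\sum m_id_i=n+1$ and that no isomorphic pair is repeated with the same $d_i$. All inputs have level one, so condition (1) of Proposition \ref{prop: criterion} is automatic.

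\textbf{Step 2: archimedean conditions.} Next I write down $\psi_\infty$. The $L$-parameter of $\sigma_{k'}$ at infinity is $\rho_{k'-1}$, and that of $\Sym^2\sigma_{k'}$ is $\rho_{2k'-2}\oplus\sgn$. For $\tau_{j,k'}$ and $\xi_{j,k'}$ I use the standard archimedean parameters, namely $\rho_{j+2k'-3}\oplus\rho_{j+1}$ and $\rho_{j+2k'-2}\oplus\rho_{2k'-4}\oplus\sgn$ (up to normalization). I then verify the Adams--Johnson conditions of Definition \ref{def: AJ}: the parity condition $k_i\equiv d_i \bmod 2$ and the gap equalities \eqref{eqn: AJ equality} or \eqref{eqn: AJ equality II}, with $k_1+d_1=n+1$ and $d_1=d$, so that $k=n+1-d$. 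In each row the equalities force the weights of the auxiliary forms to be affine functions of $n$ and $d$. The parity of $k_i$ versus $d_i$, and the requirement that weights of level one forms be even, produce part of the congruence on $d \bmod 8$.

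\textbf{Step 3: existence of the inputs.} Existence then reduces to nonvanishing of the relevant spaces of cusp forms.
\begin{itemize}
\item For elliptic forms this is the familiar condition that $S_w(\SL_2(\Z))\ne0$ exactly for even $w\ge12$, $w\ne14$.
\item For the genus-$2$ vector-valued Siegel forms I would invoke the known dimension formulas (Tsushima, Petersen) for $S_{\Sym^j\otimes\det^{k'}}(\Sp_4(\Z))$. I also need to exclude non-general-type (Saito--Kurokawa or Yoshida-type) eigenforms, so that $\tau_{j,k'}$, $\xi_{j,k'}$ are genuinely cuspidal on $\GL_4$, $\GL_5$. It suffices to use weights where these lifts do not occur, e.g.\ $j>0$, or to take the forms listed in \cite{CL}.
\end{itemize}
The upper bounds such as $d\le(n+1)/5$ or $d<(n-1)/7$ should come precisely from requiring that the auxiliary weights stay above the thresholds where these spaces are nonzero. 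This is also where the exceptional bound $n\ge41$ for $(n,k)\equiv(1,0)$ arises: the smallest admissible $d$ forces some auxiliary weight to be $14$, or into a weight where the Siegel space vanishes.

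\textbf{Step 4: the sign identity, the main obstacle.} The hard part is condition (4), $\eta_\infty\circ\Delta_\infty=\vep_\psi$. On the left side, for each global summand $i$ I evaluate $\eta_\infty(\Delta_\infty(e_i))=\prod_{j\in I_i}\eta_\infty(e_{j,\infty})$ using \eqref{eqn: etaAJ}, keeping track of the cumulative sums $d_{\le i}$ in the order of decreasing $k$. On the right side I compute $\vep_\psi(e_i)$ by \eqref{def_vep_psi}. Only pairs with $d_i\not\equiv d_j \bmod 2$ contribute, and for these $\vep(\pi_i\times\pi_j)$ is computed from the archimedean parameters by the recipe of \cite{CL} p.~203. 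That recipe gives $\vep(\rho_a\times\rho_b)=(-1)^{1+\max(a,b)}$ and $\vep(\rho_a\times\sgn)=i^{a+1}$-type factors, with everything multiplicative. Since $\eta_\infty(z_\psi)=1$ by Proposition \ref{lemma_Adams_Johnson_packets} and $\vep_\psi$ is also trivial on the sum of all $e_i$, one summand's identity can be dropped. Each remaining identity becomes a congruence in $n$ and $d$ modulo $8$ (sums of the form $d_i(d_i+1)/2$ and $((n+1)/2-d_{\le i})d_i$). I expect these congruences to reduce exactly to the stated conditions $d\equiv n+3,\ n+1,\ 2,\ 0,\ n-3 \bmod 8$. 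I would first do this bookkeeping for a smallest explicit $n$ and then check that the parity pattern is stable under $n\mapsto n+8$, $d\mapsto d+8$.
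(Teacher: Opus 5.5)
Your architecture is the paper's: check Definition \ref{def: A-packet}, the Adams--Johnson equalities, nonvanishing of the inputs, and then $\eta_\infty\circ\Delta_\infty=\vep_\psi$. The paper writes out only the last step, for $k\equiv 6$ and $(n,k)\equiv(1,4)$. However, two of your concrete steps fail as written.

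\textbf{Step 2: the parameter of $\xi_{j,k'}$ is wrong.} The archimedean parameter of $\xi_{j,k'}$ is $\rho_{2j+2k'-2}\oplus\rho_{2k'-4}\oplus\mathbf{1}$, not $\rho_{j+2k'-2}\oplus\rho_{2k'-4}\oplus\sgn$ (Table \ref{table: building block}). The parameter is $\SO_5$-valued, and $\det\rho_m=\sgn$ for even $m$, so the one-dimensional piece must be trivial. This is not a normalization issue. With your formula, the summand $(\xi_{2d,(n-5d+3)/2},d)$ contributes $\rho_{n-3d+1}$ instead of $\rho_{n-d+1}$. Then $k_1+d_1=n+1$ in \eqref{eqn: AJ equality II} fails, and Step 2 breaks in the rows $k\equiv 0$, $k\equiv 4$ and $(n,k)\equiv(3,2)$.

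\textbf{Step 3: the bounds on $d$ are not the nonvanishing thresholds.} This premise is false in both directions.
\begin{enumerate}
\item In the row $k\equiv 6$, the bound $d\le(n+1)/5$ is a Step 4 input. It is equivalent to $d\le(n+1-3d)/2$, so $\min(d_1,d_2)=d$ is even and $\vep_\psi=\mathbf{1}$. Otherwise you would have to compute $\vep(\Sym^2\sigma\times\sigma)$. The nonvanishing of $\sigma_{(n+3-d)/2}$ is automatic there.
\item In the row $k\equiv 0$, the bound $d<(n-1)/7$ only ensures $(n-1-7d)/2>0$. It does not ensure the existence of the last summand $\sigma_{(n+1-5d)/2}$, which needs $(n+1-5d)/2\ge 12$. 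Concretely, $(n,d)=(35,4)$ satisfies all the stated conditions, but the parameter is $(\xi_{8,9},4)\oplus(\sigma_{24},5)\oplus(\sigma_8,3)$. There is no cusp form of weight $8$ for $\SL_2(\Z)$.
\end{enumerate}
So at $(n,d)=(35,4)$ no verification can succeed. The statement needs the extra condition $d\le(n-23)/5$ in this row, which removes exactly $(n,k)=(35,32)$. This matches Table \ref{table: small n intro}, where $32$ is absent for $n=35$, although Table \ref{table: range k even} claims it.

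\textbf{Step 4 has to be computed, not predicted.} The signs depend on inequalities as well as residues mod $8$. Both $\min(d_i,d_j)$ in $\vep_\psi$ and which argument is the max in $\vep(\rho_a\otimes\rho_b)=(-1)^{1+\max(a,b)}$ depend on the size of $d$ relative to $n$. So doing one small $n$ and shifting by $8$ does not suffice. You need the explicit root number $\vep(\xi_{2d,(n-5d+3)/2}\times\sigma_{k'})=(-1)^{k'/2}$ for even $k'\le n-5d$; this is the paper's Claim, and it produces the nontrivial $\vep_\psi$ in the $k\equiv 4$ rows. Similarly, in the row $(n,k)\equiv(1,2)$ you get $\vep(\Sym^2\sigma\times\tau_{d+4,7})=-1$, raised to the odd exponent $\min(d,5)=5$, so $\vep_\psi$ is nontrivial there too.
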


Finally, we consider the case $n=25, 27, 33, 35$. 
The above systematic constructions already cover 
many degrees $k$ for those $n$. 
We list the remaining $A$-parameters in 
Table \ref{table: input small n}. 

\begin{proposition}\label{prop: input n small}
The $A$-parameters in Table \ref{table: input small n} exist 
and satisfy the conditions in Proposition \ref{prop: criterion}. 
\end{proposition}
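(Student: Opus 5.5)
The proof is a case-by-case check of the conditions of Proposition \ref{prop: criterion} for each row of Table \ref{table: input small n}. I do not see the table itself, but its rows will be global $A$-parameters $\psi=\bigoplus_i(\pi_i,d_i)$ built from $\mathbf{1}$, $\sigma_k$, $\Sym^2\sigma_k$, $\tau_{j,k}$ and $\xi_{j,k}$. For each row I would carry out the following four steps.

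\emph{Existence and unramifiedness.} For each cuspidal input, I would cite the relevant dimension of cusp forms. For $\sigma_k$ this is $\dim S_k(\SL_2(\Z))>0$, which holds for $k=12,16,18,20,22$ and all even $k\geq 24$. For $\tau_{j,k}$ and $\xi_{j,k}$ it is the nonvanishing of the corresponding space of vector-valued Siegel cusp forms of level one that are not lifts. The existence of the associated $\GL_4$ and $\GL_5$ cuspidal representations, and their symplectic or orthogonal type, I take from the later subsections on these inputs. Since all forms are level one, every $\pi_i$ is unramified, which gives condition (1). I would also verify the parity constraints of Definition \ref{def: A-packet}: symplectic inputs ($\sigma_k$, $\tau_{j,k}$) must carry odd $d_i$, and orthogonal inputs ($\mathbf{1}$, $\Sym^2\sigma_k$, $\xi_{j,k}$) must carry even $d_i$. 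I would further check that $\sum m_id_i=n+1$ and that no pair $(\pi_i,d_i)$ repeats.

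\emph{Archimedean parameter.} I would write down $\psi_\infty$ explicitly using the standard local parameters:
\begin{itemize}
\item $\sigma_{k,\infty}=\rho_{k-1}$;
\item $(\Sym^2\sigma_k)_\infty=\rho_{2k-2}\oplus\sgn$;
\item $\tau_{j,k,\infty}=\rho_{j+2k-3}\oplus\rho_{j+1}$;
\item $\xi_{j,k,\infty}=\rho_{j+2k-2}\oplus\rho_{2k-4}\oplus\sgn$, or the analogue fixed in the later subsection.
\end{itemize}
I would then sort the summands by decreasing $k_{i,j}$. Next I check the equalities \eqref{eqn: AJ equality} or \eqref{eqn: AJ equality II}, meaning the infinitesimal character is the Weyl vector and the weights exactly interlock. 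I also check $k_{i,j}\equiv d_i \bmod 2$, and that $d_1=n-k+1$ for the prescribed $k$. This is a direct but tedious check of condition (2) and condition (3), and it is essentially what dictated the choice of weights in each row.

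\emph{Character identity (condition (4)).} This is the main obstacle. On the global side, I would compute $\varepsilon_\psi(e_i)$ from \eqref{def_vep_psi}. Only pairs with $d_i\not\equiv d_j \bmod 2$ contribute, and for these I use the recipe of \cite{CL} p.~203. The root number $\vep(\pi_i\times\pi_j)$ is determined by the archimedean parameters: it is the product, over pairs of summands $\rho_a\subset\pi_{i,\infty}$ and $\rho_b\subset\pi_{j,\infty}$ (with $\sgn$ treated as weight $0$), of the sign $(-1)^{\max(a,b)+1}$, up to the normalisations in \cite{CL}.

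On the local side, I compute $\eta_\infty(\Delta_\infty(e_i))=\prod_{j\in I_i}\eta_\infty(e_{j,\infty})$ using \eqref{eqn: etaAJ}. Here the block positions $d_{\leq j}$ are taken in the sorted order from the previous step, and the $\sgn$ summand is counted in the $d_0$ block. Since Proposition \ref{lemma_Adams_Johnson_packets} gives $\eta_\infty(z_\psi)=1$, and $\varepsilon_\psi$ is likewise trivial on $\sum_ie_i$ (each factor $\vep(\pi_i\times\pi_j)$ appears twice), it suffices to check the identity on all but one $e_i$. For the rows of Table \ref{table: input small n} there are at most four summands, so at most three signs need checking. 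The parameters in the table were presumably found by searching for exactly these sign coincidences.

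For bookkeeping, I would present these signs in a small table for each $n\in\{25,27,33,35\}$. Once all four conditions hold, Proposition \ref{prop: criterion} applies and yields the holomorphic $k$-forms listed.
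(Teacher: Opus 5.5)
Your plan is the paper's plan: check the four conditions of Proposition~\ref{prop: criterion} row by row, with $\eta_\infty\circ\Delta_\infty=\vep_\psi$ as the only substantive step (the paper writes this out only for $(n,k)=(27,23)$). Your bookkeeping is fine: existence from the table of nonvanishing $S_{j,k}$ (all $j>0$, so the parameters are simple), unramifiedness, parities, $\sum m_id_i=n+1$, and the reduction via $\eta_\infty(z_{\psi_\infty})=1=\vep_\psi(\sum_i e_i)$. However, two explicit formulas you propose to compute with are wrong, and each breaks a row of the table.

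\emph{Root numbers for one-dimensional summands.} The rule $(-1)^{\max(a,b)+1}$ is correct for $\rho_a\otimes\rho_b$ with $a\neq b$, since $\rho_a\otimes\rho_b\simeq\rho_{a+b}\oplus\rho_{|a-b|}$ and $\vep(\rho_w)=i^{w+1}$. It is wrong for $\chi\in\{\mathbf{1},\sgn\}$, which you propose to treat as weight $0$. One has $\rho_a\otimes\chi\simeq\rho_a$, whose factor is $i^{a+1}=(-1)^{(a+1)/2}$ for the odd $a$ that occur, not $(-1)^{a+1}=1$. So $\vep(\sigma_k\times\mathbf{1})=(-1)^{k/2}$, the usual sign of $L(s,f)$, and this is what the paper uses.

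In row $(27,23)$ this is decisive. The factor $\vep(\sigma_{18}\times\mathbf{1})=-1$ enters with exponent $\min(1,4)=1$, so $\vep_\psi=(1,-1,1,-1)$ on $(e_1,e_2,e_3,e_0)$. This matches $\eta_\infty\circ\Delta_\infty$ obtained from \eqref{eqn: etaAJ}, whose values on $e_{1,\infty},e_{2,\infty},e_{3,\infty},e_{4,\infty},e_{0,\infty}$ are $-1,-1,1,-1,-1$. Your recipe would give $\vep_\psi=\mathbf{1}$, and you would wrongly reject this parameter.

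\emph{Archimedean parameter of $\xi_{j,k}$.} It is $\rho_{2j+2k-2}\oplus\rho_{2k-4}\oplus\mathbf{1}$, not $\rho_{j+2k-2}\oplus\rho_{2k-4}\oplus\sgn$. It is the standard representation of $\SO_5(\C)$ at Harish-Chandra parameter $(j+k-1,k-2)$, and its determinant must be trivial. With your version, row $(33,32)$ fails condition (2): $\xi_{8,9}$ would contribute $\rho_{24}$ instead of $\rho_{32}$, so $k_1+d_1=34$ cannot hold. With the correct parameter,
\[
\psi_\infty=(\rho_{32},2)\oplus(\rho_{29},1)\oplus(\rho_{23},5)\oplus(\rho_{17},1)\oplus(\rho_{14},2)\oplus(\rho_7,5)\oplus(\mathbf{1},2).
\]
This satisfies \eqref{eqn: AJ equality II}, and the character identity holds with both sides trivial. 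With these two corrections your verification goes through.
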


\begin{table}
    \centering
    \caption{Easy $A$-parameters}
    \label{table: simple parameter}
    \begin{tabular}{c|c|c}
         $n \bmod 8$ & \text{$A$-parameters} & bound of $n$ \\
         \hline
    $1$ & $(\sigma_{(n+3)/2}, \; (n+1)/2)$ & $n\geq 33$  \\
    \hline 
    $3$ & $(\tau_{(n-3)/4, (n+9)/4}, (n+1)/4)$ & $n\geq 35$ \\
    \hline
    $1, 3$ & $(\sigma_{(n+\ell+3)/2}, \; (n+1-\ell)/2) \oplus 
    (\mathbf{1}, \: \ell)$ & $n\geq 25$    
    \end{tabular}
\end{table}

\renewcommand{\arraystretch}{1.2}
\begin{table}
    \centering
    \caption{Odd $k$}
    \label{table: input odd k}
    \begin{tabular}{c|c|c}
         $n \bmod 8$& $k \bmod 4$ &\text{$A$-parameters} \\
         \hline
    $1$ & $1$ & $(\sigma_{n+2-d}, \, d) \oplus (\sigma_{n+1-2d},\, 1) 
        \oplus (\sigma_{(n+1)/2-d}, \, (n-1)/2-d)$  \\
    \hline
    $3$ & $1$ &  $(\sigma_{n+2-d}, \, d) \oplus 
        (\sigma_{(n+3)/2-d}, \, (n+1)/2-d)$  \\
    \hline
    $1$ & $3$ &  $(\tau_{n-3d, d+2}, \, d) \oplus 
        (\sigma_{(n+3)/2-2d}, \, (n+1)/2-2d)$  \\
    \hline
    $3$ &$3$ &  $(\tau_{n-3d, d+2}, \, d) \oplus 
        (\sigma_{n-4d+1}, 1) \oplus 
        (\sigma_{(n+1)/2-2d}, \, (n-1)/2-2d)$  
    \end{tabular}
\end{table}

\renewcommand{\arraystretch}{1.2}
\begin{table}
    \centering
    \caption{Even $k$}
    \label{table: input even k}
    \begin{tabular}{c|c|c}
         $n \bmod 8$& $k \bmod 8$ &\text{$A$-parameters} \\
         \hline
    $1, 3$ & $6$ &  
    $(\Sym^2\sigma_{(n+3-d)/2}, \, d) \oplus 
    (\sigma_{(n+3-d)/2},  \, (n+1-3d)/2)$   \\
    \hline
    $1, 3$ & $0$ &  
    \begin{tabular}{c}
    $(\xi_{2d, (n-5d+3)/2}, \, d) \oplus 
    (\sigma_{n+1-3d}, \, d+1)$ \\ 
    $\oplus (\sigma_{(n+1-5d)/2}, \, (n-1-7d)/2)$  
    \end{tabular} \\
    \hline
    $3$ & $2$ & 
    \begin{tabular}{c}
    $(\xi_{2d, (n+3-5d)/2}, d) \oplus 
    (\tau_{2d, (n+3-5d)/2}, d+1)$  \\ 
    $\oplus (\sigma_{(n+3-3d)/2}, (n-3-9d)/2)$ 
    \end{tabular} \\
    \hline
    $1$ & $2$ & 
    \begin{tabular}{c}
    $(\Sym^2\sigma_{(n-d+3)/2},d)$  \\ 
    $\oplus (\sigma_{(n+23-d)/2}, (n-19-3d)/2) 
     \oplus (\tau_{d+4,7}, 5)$
   \end{tabular} \\ 
    \hline
    $1$ &$4$ & 
    \begin{tabular}{c}
    $(\xi_{2d, (n-5d+3)/2}, d) \oplus (\sigma_{n-3d+1}, d+1)$ \\ 
    $\oplus \bigoplus_{m=1}^{(n-7d-11)/2} (\sigma_{n-6d+1-2m}, 1) \oplus (\sigma_{d+6}, 5)$ 
    \end{tabular} \\ 
    \hline
    $3$ & $4$ & 
    \begin{tabular}{c}
    $(\xi_{2d, (n-5d+3)/2}, d) \oplus 
    (\sigma_{n-3d+1}, d+1)$ \\ 
    $\oplus \bigoplus_{m=1}^{(n-7d-7)/2} (\sigma_{n-6d+1-2m}, 1) 
    \oplus (\sigma_{d+4}, 3)$ 
    \end{tabular}
    \end{tabular}
\end{table}

\begin{table}
    \centering
    \caption{Remaining $A$-parameters in $n\leq 35$}
    \label{table: input small n}
    \begin{tabular}{c|c|c}
         $n$& $k$ &\text{$A$-parameters} \\
         \hline
    $27$ & $23$ & $(\tau_{8,9}, 5) \oplus (\sigma_{18}, 1) \oplus (\sigma_{16}, 1) \oplus (\mathbf{1}, 4)$  \\
    \hline
    $33$ & $25$ &   $(\sigma_{26}, 9) \oplus (\sigma_{12}, 5) \oplus (\mathbf{1}, 6)$  \\
    \hline
    $33$ & $27$ &  $(\tau_{12,9}, 7) \oplus (\mathbf{1}, 6)$   \\
    \hline
    $33$ & $32$ & $(\xi_{8,9}, 2) \oplus (\tau_{16,8}, 1) \oplus (\tau_{6,10}, 5)$\\
    \hline
    $35$ & $25$ &   $(\sigma_{26}, 11) \oplus (\sigma_{12}, 3) \oplus 
    (\mathbf{1}, 8)$  \\
    \hline
    $35$ & $31$ &   $ (\tau_{4,15}, 5) \oplus (\sigma_{20}, 7) \oplus (\sigma_{12}, 1)$   
    \end{tabular}
\end{table}

Theorem \ref{thm: main intro} now follows by 
substituting these results into Proposition \ref{prop: criterion}.  
More precise coverage is as follows. 

\begin{proof}[(Proof of Theorem \ref{thm: main intro})]
We begin with the assertion (i). 
The case $k \equiv 3 \bmod 4$ is covered by 
Proposition \ref{prop: input k odd}. 
In the case $k\equiv 1 \bmod 4$, 
the range $(n+23)/2 \leq k <n$ is covered by 
Proposition \ref{prop: input k odd}. 
The remaining range is covered by Proposition \ref{prop: k minimum} 
except for $(n, k)=(33, 25), (35, 25), (41, 29)$. 
The first two cases are covered by 
Proposition \ref{prop: input n small}. 
For the last case, we use 
the $A$-parameter 
$(\sigma_{30}, 13)\oplus (\sigma_{12}, 5)\oplus (\mathbf{1}, 6)$. 

The assertion (ii) is covered by 
Proposition \ref{prop: input k even} 
(and Proposition \ref{prop: input n small} for $(n, k)=(33, 32)$), 
and (iii) is covered by Propositions 
\ref{prop: k minimum} -- \ref{prop: input n small}.     
\end{proof}

The rest of this section is devoted to explaining and proving 
these propositions. 
In \S \ref{subsection_GL_2_autom_rep}, we recall $\sigma_k$ and $\Sym^2 \sigma_{k}$. 
In \S \ref{subsection_autom_rep_GL_4_5}, we explain $\tau_{j,k}$ and $\xi_{j,k}$. 
In \S \ref{ssec: input verify}, we carry out the proof of 
the above propositions in some sample cases. 
\S \ref{ssec: classify} is a supplement, 
where we give a classification result 
in the simplest case in Proposition \ref{prop: k minimum}.

\begin{remark}
For the convenience of the readers, 
we provide the following two additional tables: 

(1) 
Table \ref{table: input n=25, 27} 
is the list of $A$-parameters 
in $n=25, 27$ constructed in this paper.  

(2) 
Table \ref{table: middle n} 
is a list of degrees $k$ where we can construct 
holomorphic forms for $n=41, 43, 49, 51$ 
(without exhibiting the corresponding $A$-parameters). 
This extends Table \ref{table: small n intro}, 
and shows how the distribution of $k$ 
looks like. 
\end{remark}

\begin{remark}
    Canonical forms can be constructed for more general lattices. 
    This will be discussed elsewhere. 
\end{remark}

\renewcommand{\arraystretch}{1.2}
\begin{table}
    \centering
    \caption{$A$-parameters in $n=25, 27$}
    \label{table: input n=25, 27}
    \begin{tabular}{c|c|c}
    $n$& $k$ & \text{$A$-parameters} \\
    \hline 
   $25$ & $17$ & $(\sigma_{18}, 9) \oplus (\mathbf{1}, 8)$ \\ 
    \hline
    $25$ & $22$ & $(\Sym^2\sigma_{12}, 4) \oplus (\sigma_{12}, 7)$ \\ 
    \hline
    $27$ & $17$ & $(\sigma_{18}, 11) \oplus (\mathbf{1}, 6)$ \\ 
    \hline
    $27$ & $23$ & $(\tau_{8,9}, 5) \oplus (\sigma_{18}, 1) \oplus (\sigma_{16}, 1) \oplus (\mathbf{1}, 4)$   \\
    \hline
    $27$ & $25$ & $(\sigma_{26}, 3) \oplus (\sigma_{12}, 11)$  \\
    \hline
    $27$ & $26$ &  $(\xi_{4,10}, 2) \oplus (\tau_{4,10}, 3) \oplus (\sigma_{12}, 3)$   
\end{tabular}
\end{table}

\begin{table}
    \centering
    \caption{Semi-small $n$}
    \label{table: middle n}
    \begin{tabular}{c|c}
    $n$ & $k$ \\
    \hline
    $41$ & $21, 25, 29, 33, 35, 37, 38, 39, 40$  \\
    \hline
    $43$ & $25, 29, 33, 35, 37, 38, 39, 40, 41, 42$  \\
    \hline
    $49$ &  $25, 29, 33, 37, 39, 41, 43, 45, 46, 47, 48$  \\
    \hline
    $51$ &  $29, 33, 37, 39, 41, 43, 45, 46, 47, 48, 49, 50$  
    \end{tabular}
\end{table}

\subsection{Elliptic cusp forms}\label{subsection_GL_2_autom_rep}

Let $f$ be a Hecke eigenform of weight $k$ with 
respect to $\SL_2(\bbZ)$.
We denote by $\pi_{f}$ the cuspidal representation of $\GL_2(\bbA)$ generated by $f$. 
The archimedean component is 
the discrete series representation of $\GL_2(\bbR)$ 
with $L$-parameter $\rho_{k-1}$ 
(see, e.g., \cite{CL} p.166 and p.198). 
The representation $\pi_f$ is symplectic, as is well-known. 
In general, we use the notation $\sigma_k$ for 
the cuspidal representation $\pi_f$ of $\GL_2(\bbA)$ 
generated by an arbitrary Hecke eigenform $f$ 
of weight $k$ for $\SL_2(\bbZ)$. 

Next, for a given $\sigma_{k}$, 
Gelbart and Jacquet (\cite{GJ} Theorem 9.3) 
constructed its symmetric square lifting $\Sym^2\sigma_{k}$. 
This is a cuspidal representation of $\GL_3(\bbA)$ 
whose $L$-parameter is the symmetric square of that of 
$\sigma_k$. 
By a direct calculation, we see that 
the archimedean $L$-parameter of $\Sym^2\sigma_{k}$ 
is $\rho_{2k-2} \oplus \sgn$ and that   
$\Sym^2\sigma_{k}$ is orthogonal. 

\subsection{Siegel cusp forms of genus 2}\label{subsection_autom_rep_GL_4_5}

We denote by $S_{j, k}$ the space of Siegel cusp forms of weight 
$\Sym^j \otimes \det^k$ with respect to $\Sp_4(\bbZ)$. 
Note that $j$ must be even. 

\begin{lemma}\label{existence_SMF}
We have $S_{j, k}\ne 0$ for pairs $(j, k)$
described in Table \ref{table_existence_SMF}. 
\end{lemma}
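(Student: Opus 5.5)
The natural route is a dimension formula for vector-valued Siegel cusp forms of degree $2$ and level one, rather than constructing forms one by one. For $j\ge 2$ even and $k\ge 3$, Tsushima's formula (together with the work of Petersen and Ta\"ibi for small weights) gives $\dim S_{j,k}$ explicitly as a quasi-polynomial in $(j,k)$. It is the sum of a main term of order $j k^{2}$ and periodic correction terms, with periods dividing $120$ or so. So, for each family of pairs $(j,k)$ in Table \ref{table_existence_SMF}, I would substitute the parametrization into this formula. For example, $(j,k)=(n-3d,d+2)$ for the $\tau$'s, or $(2d,(n+3-5d)/2)$ for the $\xi$'s, where $d$ and $n$ run through the residue classes dictated by Propositions \ref{prop: input k odd} and \ref{prop: input k even}. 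I would then check that the resulting dimension is positive.

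The key steps are as follows.
\begin{enumerate}
\item Write each family in the table as an explicit linear parametrization of $(j,k)$ in one or two integer parameters.
\item Write Tsushima's formula as a polynomial main term plus bounded periodic terms.
\item Bound the periodic terms in absolute value uniformly, which is easy since they are finite sums of roots of unity with explicit rational coefficients.
\item Show that the main term dominates once $j+k$ exceeds an explicit constant.
\item Treat the finitely many pairs below that constant by direct lookup in existing tables of $\dim S_{j,k}$, for instance the tables of Bergstr\"om--Faber--van der Geer, or the LMFDB. These tables also supply the individual pairs used in Table \ref{table: input small n}, such as $(8,9)$, $(12,9)$, $(16,8)$, $(6,10)$, $(4,15)$ and $(4,10)$.
\end{enumerate}

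For odd $k$ the relevant forms are not lifts, so there is no worry there. Even where Saito--Kurokawa or Klingen-type contributions matter, the paper only needs general-type eigenforms whose associated $\tau_{j,k}$ or $\xi_{j,k}$ are cuspidal. However, the statement of the lemma itself is only $S_{j,k}\neq 0$, and for $j>0$ there are no Saito--Kurokawa lifts, so nonvanishing of the space is exactly what dimension positivity gives.

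I expect the main obstacle to be step 4, namely getting an effective inequality. Near the boundary of the ranges, for example $k$ as small as $3$, $4$ or $5$, the main term of Tsushima's formula is not large. The formula also has a different shape at $k=3$, and for $k\le 4$ one needs Petersen/Ta\"ibi instead of Tsushima. So I would first try a crude lower bound of the form
\[
\dim S_{j,k}\ \ge\ \frac{(j+1)(k-2)(j+k-1)}{8640}-C(j+k),
\]
with an explicit linear error $C(j+k)$. I would use this bound to reduce to a finite check, and then verify the residual small cases by the tables. If this inequality is too lossy for the families where $k$ stays bounded, such as $(d+4,7)$, then for those one-parameter families I would instead specialize the formula exactly in $j$ for fixed $k$. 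This yields an explicit quasi-linear function of $j$, and its positivity can be read off directly.
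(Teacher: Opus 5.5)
Your route is essentially the paper's: its proof is just a citation of Wakatsuki's dimension formula (his Theorem~7.1) and Petersen's results, together with the table in van der Geer's survey, and the positivity check is left implicit. If you do carry that check out, two corrections: Table~\ref{table_existence_SMF} is a region listed row by row (fixed $j\le 14$ with $k\ge k_0$, the block $j\ge 16$, and the rows $3\le k\le 7$), not the $\tau$/$\xi$ families of Tables~\ref{table: input odd k} and~\ref{table: input even k}; and the formula's lower-order terms have total degree $2$ (already at $j=0$, Igusa's generating function forces a term of order $(-1)^k k^2$), so they must be compared with the full quartic main term $2^{-7}3^{-3}5^{-1}(j+1)(k-2)(j+k-1)(j+2k-3)$, not with a cubic truncation against a linear error.
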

\begin{proof}
    This is a consequence of \cite{Wak} Theorem 7.1 and \cite{Pet} p.45. 
    See also the table in \cite{vdG} \S 25. 
\end{proof}

\begin{table}
  \centering
  \caption{Range of $(j, k)$ with $S_{j, k}\ne0$}\label{table_existence_SMF}
  \begin{tabular}{c|cc}
    \hline
    $j \,\diagdown\, k$ & \textbf{even} $k$ & \textbf{odd} $k$\\
    \hline
    $0$ & $k \ge 10$ & $k=35$ or $k \ge 39$ \\
    $2$ & $k \ge 14$ & $k \ge 21$ \\
    $4$ & $k \ge 10$ & $k \ge 15$ \\
    $6$ & $k \ge 8$ & $k \ge 11$ \\
    $8$ & $k \geq 8$ & $k \ge 9$ \\
    $10$ & $k \geq 10$ & $k \geq 9$ \\
    $12$ & $k \geq 6$ & $k \geq 7$\\
    $14$ & $k \geq 8$ & $k \geq 7$\\
    $j \geq 16$ & $k \geq 6$ & $k \geq 7$\\
    \hline
    \multicolumn{3}{p{0.92\linewidth}}{\emph{Small $k$.}\; 
     If $k=3$, $j=36$ or $j\ge 42$. 
     If $k=4$,  $j=24$ or $j\geq 28$. 
     If $k=5$, $j=18,20$ or $j\ge 24$. 
     If $k=6$, $j=12$ or $j\ge 16$. 
     If $k=7$, $j\ge 12$. 
     } \\
    \hline
  \end{tabular}
\end{table}

Let $f$ be a Hecke eigenform in $S_{j, k}$. 
By the exceptional isomorphism $\SO(2,3) \isom \PGSp_4$, 
we may regard $f$ as an automorphic form on 
the split orthogonal group $\SO(2,3)(\bbA)$ 
(see, e.g., \cite{RS} \S A.7). 
Let $\pi_f = \otimes_{p \leq \infty} \pi_{f,p}$ be 
the cuspidal representation of 
$\SO(2,3)(\bbA)$ generated by $f$.
The archimedean component has $L$-parameter
$\rho_{2k+j-3} \oplus \rho_{j+1}$ 
(see, e.g., \cite{CL} p.166 and p.198). 

By the Arthur multiplicity formula for $\SO(2,3)$ (\cite{Arthur}), 
there exists a global $A$-parameter $\psi$ of $\SO(2,3)$ and 
a character $\eta = \prod_p\eta_p$ such that
$\pi(\psi_p, \eta_p) \isom \pi_{f,p}$ 
for every $p$.
When $\psi$ is of the form $ (\Pi, 1)$ 
for some cuspidal representation $\Pi$ 
of $\GL_4(\bbA)$, 
we call $\psi$ \textit{simple}. 
(See \cite{Sch} \S 1.1 for other possibilities of $\psi$, 
which we will not use.) 
By the condition in Definition \ref{def: A-packet}, 
$\Pi$ must be symplectic.

\begin{lemma}\label{lemma_vect_val_SMF_stable}
    Let $f$ be a Hecke eigenform in $S_{j, k}$. 
    If $j >0$, the corresponding $A$-parameter is simple.
    Moreover, if $k \geq 20$ and $S_{0, k} \neq 0$, 
    there exists a Hecke eigenform whose $A$-parameter is simple.
\end{lemma}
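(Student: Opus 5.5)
Since the Arthur multiplicity formula for $\SO(2,3)\simeq \PGSp_4$ is already invoked, the plan is to enumerate the non-simple global $A$-parameters $\psi$ of $\SO(2,3)$ (following \cite{Sch} \S 1.1, or equivalently the list of Arthur types for $\Sp_4$) and to show that none of them has an Adams--Johnson localization $\psi_\infty$ compatible with the archimedean $L$-parameter $\rho_{2k+j-3}\oplus\rho_{j+1}$ of a holomorphic Siegel cusp form of level one, except in the Saito--Kurokawa case. Concretely, the non-simple $\psi$ with $\sum m_id_i=4$ are: (a) $(\sigma,1)\oplus(\sigma',1)$ with $\sigma\ne\sigma'$ cuspidal symplectic on $\GL_2$ (Yoshida type); (b) $(\sigma,1)\oplus(\chi,2)$ with $\chi$ a quadratic character (Saito--Kurokawa type); (c) $(\chi,2)\oplus(\chi',2)$ with $\chi\neq\chi'$ (Soudry type); (d) $(\mu,2)$ with $\mu$ a cuspidal orthogonal representation of $\GL_2$; and (e) $(\chi,4)$. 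For level one, all $\pi_i$ must be unramified at every $p$ by Remark \ref{remark: unramified local} (since $\pi_{f,p}$ is spherical), so $\chi=\mathbf{1}$ and $\sigma,\sigma'$ come from level-one elliptic eigenforms. This immediately kills (c), which needs $\chi\ne\chi'$; it also kills (d), since an unramified orthogonal cuspidal representation of $\GL_2$ over $\Q$ would be dihedral from an everywhere-unramified quadratic extension, and none exists.

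For the first assertion ($j>0$), I would compare infinitesimal characters and $K$-types. In case (e), $\pi_\infty$ lies in the packet of $\Sym^3$, i.e.\ it is the trivial (or a one-dimensional) representation, which is not the holomorphic discrete series of weight $\Sym^j\otimes\det^k$. In case (b), $\psi_\infty=\rho_{k'-1}\boxtimes 1\oplus \mathbf{1}\boxtimes\Sym^1$ has infinitesimal character containing $1/2$. Matching this with $\rho_{2k+j-3}\oplus\rho_{j+1}$, i.e.\ with the infinitesimal character $((2k+j-3)/2,(j+1)/2)$, forces $(j+1)/2=1/2$, i.e.\ $j=0$. In case (a), the packet at $\infty$ is a tempered discrete-series $L$-packet $\{\pi^{hol},\pi^{gen}\}$, and the sign condition $\eta\circ\Delta=\vep_\psi$, with $\eta_p=\mathbf{1}$ for all $p<\infty$, becomes a condition at $\infty$ alone. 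Here $\vep_\psi=\mathbf{1}$ since both $d_i=1$. Using the description of $\iota_\infty$ from \S\ref{sec: AJ} (the holomorphic member corresponds to the nontrivial character on each $e_{i,\infty}$ when $\SO(2,3)$ is the form $t_0$), the holomorphic member would require $\eta_\infty(e_{1,\infty})=-1$, contradicting $\vep_\psi(e_1)=1$. So the Yoshida type does not occur in level one. This argument is the classical one for the nonexistence of level-one Yoshida lifts.

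For the second assertion, with $j=0$ and $k\ge 20$, $S_{0,k}\ne0$, I would use a dimension count. The Saito--Kurokawa lifts span a subspace of $S_{0,k}$ of dimension $\dim S_{2k-2}(\SL_2(\Z))$ when $k$ is even, and are absent for odd $k$ by the sign condition ($\vep(1/2,\sigma)=(-1)^{k-1}$, analogous to the computation above). Since Hecke eigenforms with non-simple parameters are exactly the Saito--Kurokawa lifts by the analysis above, it suffices to show that $\dim S_{0,k}>\dim S_{2k-2}$ for $k\ge 20$ (with $S_{0,k}\ne0$). For odd $k$ this is automatic. For even $k$ I would use Igusa's generating function $\sum\dim M_{0,k}t^k = (1+t^{35})/((1-t^4)(1-t^6)(1-t^{10})(1-t^{12}))$ and compare it with the Maass space. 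The difference grows roughly like $k^3$, and one checks it is positive starting from $k=20$, the first weight with a non-lift form.

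The step I expect to be the main obstacle is the sign bookkeeping in case (a): pinning down precisely which character of $\calS_{\psi_\infty}$ the holomorphic discrete series corresponds to under the fixed Whittaker normalization. My first attempt would be to apply formula \eqref{eqn: etaAJ} with $n=3$ to the two summands, or to cite the equivalent classical statement from \cite{CL} Chapter 9.
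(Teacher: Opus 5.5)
Your proof is correct and has the same skeleton as the paper's. You exclude the non-simple Arthur types, note that for $j=0$ only the Saito--Kurokawa type survives, and conclude by comparing $\dim S_{0,k}$ with $\dim S_{2k-2}(\SL_2(\Z))$.

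The difference is in how the exclusion is done. The paper does it entirely by citation: \cite{Ish} Lemma 7.8 for $j>0$, and \cite{RSY}, \cite{Sch} for the fact that $S_{0,k}$ is spanned by Saito--Kurokawa lifts and eigenforms with simple parameter. You derive it from Arthur's formula for $\SO(2,3)$ itself:
\begin{itemize}
\item level one forces every $\pi_i$ to be unramified, which rules out the types built from nontrivial quadratic characters or dihedral $\GL_2$-forms;
\item the archimedean packet rules out $(\mathbf{1},4)$;
\item the infinitesimal character rules out Saito--Kurokawa when $j>0$;
\item the sign condition $\eta_\infty\circ\Delta_\infty=\vep_\psi=\mathbf{1}$ rules out Yoshida.
\end{itemize}

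This keeps the lemma self-contained within the paper's framework. The sign you need is just \eqref{eqn: etaAJ} with $n=3$, $d_1=d_2=1$, giving $\eta_\infty=(-1,-1)$. The derivation of $\iota_\infty^{\mathrm{AJ}}(\pi_{t_0})$ never uses the hypothesis on the infinitesimal character, and $\pi_{t_0}$ is the holomorphic discrete series for any admissible regular $\lambda$.

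The same computation with $d_1=1$, $d_0=2$ shows that the holomorphic member of the Saito--Kurokawa packet has $\eta_\infty(e_{1,\infty})=\eta_\infty(e_{0,\infty})=-1$. So it occurs only if $\vep(1/2,\sigma_{2k-2})=(-1)^{k-1}=-1$, i.e.\ $k$ even. This is exactly what makes your odd-$k$ case immediate.

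The cost of your route is length. It also leaves two points implicit, as the paper does.
\begin{itemize}
\item Bounding the non-simple part of $S_{0,k}$ by $\dim S_{2k-2}$ uses multiplicity one in Arthur's formula, together with one-dimensionality of the spherical and lowest-$K$-type vectors (or, equivalently, the classical Maass lift).
\item Positivity of $\dim S_{0,k}-\dim S_{2k-2}$ for all even $k\ge 20$ needs a growth estimate plus a finite check. The differences are $3-2$, $4-3$, $5-3$ at $k=20,22,24$.
\end{itemize}

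A minor labelling point: $(\chi,2)\oplus(\chi',2)$ is the Howe--Piatetski-Shapiro type; Soudry's type is $(\mu,2)$.
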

\begin{proof}
    The case $j>0$ is proved in \cite{Ish} Lemma 7.8.
    When $j = 0$, the space $S_{0, k}$ is spanned by 
    the Saito-Kurokawa lifting and Hecke eigenforms 
    corresponding to simple $A$-parameters 
    (\cite{RSY} Proposition 2.1 and \cite{Sch} Lemma 2.5). 
    Hence, it suffices to show that 
    when $k \geq 20$ and $S_{0,k} \neq 0$, 
    then $S_{0,k}$ is not spanned by the Saito-Kurokawa lifting.
    This follows from comparison of dimension with 
    the source of the Saito-Kurokawa lifting. 
\end{proof}

In general, we use the notation $\tau_{j, k}$ 
for the cuspidal representation of $\GL_4(\bbA)$ 
associated to an arbitrary Hecke eigenform in $S_{j, k}$ 
by the method of Lemma \ref{lemma_vect_val_SMF_stable}.

Next let $f$ be again a Hecke eigenform in $S_{j,k}$. 
We go back to viewing $f$ as an automorphic form on $\Sp_{4}(\bbA)$.
Note that the dual group of $\Sp_4$ is $\SO(3,2)\subset \GL_5$. 
By the Arthur multiplicity formula, now for $\Sp_4$ (\cite{Arthur}), 
we obtain the global $A$-parameter $\psi$ of $\Sp_4$ 
associated to $f$. 
As in the case of $\SO(2,3)$, 
if $f$ satisfies the conditions in Lemma \ref{lemma_vect_val_SMF_stable}, 
the $A$-parameter $\psi$ is of the form $(\Pi, 1)$ 
where $\Pi$ is a cuspidal representation of $\GL_5(\bbA)$ 
(rather than $\GL_4(\bbA)$).
By Arthur's multiplicity formula for $\Sp_4$, 
we see that $\Pi$ is orthogonal. 
The archimedean component of $\Pi$ has $L$-parameter 
$\rho_{2k+2j-2} \oplus \rho_{2k-4} \oplus \mathbf{1}$ 
(see \cite{Sch_arch} p.\ 2404 and \S 3.1). 
In general, we use the notation $\xi_{j, k}$ 
for the cuspidal representation of $\GL_5(\bbA)$ associated to an arbitrary Hecke eigenform in $S_{j,k}$ by this method. 

Our recollection in \S \ref{subsection_GL_2_autom_rep} 
and \S \ref{subsection_autom_rep_GL_4_5} 
can be summarized in Table \ref{table: building block}. 
There $(m, d)$ means the information $(m_i, d_i)$ in 
Definition \ref{def: A-packet}. 
The parity of $d$ corresponds to whether 
the automorphic representation is orthogonal or symplectic. 

\begin{table}
    \centering
    \caption{Building blocks}\label{table: building block}
    \begin{tabular}{c|c|c|c}
        Rep. & $L$-parameter at $\infty$ & $m$ & $d \bmod 2$  \\
        \hline
        $\mathbf{1}$ & $\mathbf{1}$ & $1$ & even \\ 
        \hline
        $\sigma_{k}$ & $\rho_{k-1}$ & $2$ & odd \\ 
        \hline
        $\Sym^2 \sigma_{k}$ & $\rho_{2k-2}\oplus \sgn$ & $3$ & even \\ 
        \hline
        $\tau_{j,k}$ & $\rho_{2k+j-3}\oplus \rho_{j+1}$ & $4$ & odd \\ 
        \hline 
        $\xi_{j,k}$ & $\rho_{2k+2j-2} \oplus \rho_{2k-4}\oplus \mathbf{1}$ 
        & $5$ & even 
    \end{tabular}
\end{table}

\subsection{Proof of propositions}\label{ssec: input verify}

In this subsection, we prove Propositions 
\ref{prop: k minimum}, \ref{prop: input k odd}, 
\ref{prop: input k even}, and \ref{prop: input n small} 
in some sample cases. 
Let us prepare some notation. 
In what follows, $(\rho_{k}, d)$ stands for 
the representation $\rho_k \boxtimes \Sym^{d-1}$ of the Weil group $W_{\R}$. 
For a given global $A$-parameter $\psi$, 
we use the notation 
\begin{equation*}
    \calS_{\psi} = 
    \bigoplus_{i} (\Z/2\Z) e_i \qquad 
    \calS_{\psi_{\infty}} = 
    \bigoplus_{j} (\Z/2\Z) e_{j,\infty} 
\end{equation*}
for the component groups of $\psi$ and $\psi_{\infty}$ 
with their standard generators corresponding to 
the given expression of the parameters. 
($\psi_{\infty}$ will be always Adams-Johnson.) 
Finally, 
recall that we always have $\eta_{\infty}(z_{\psi_{\infty}})=1$ 
(Proposition \ref{lemma_Adams_Johnson_packets}),  
which somewhat reduces the calculation of $\eta_{\infty}$.

\begin{proof}[(Proof of Proposition \ref{prop: k minimum})]
    That the $A$-parameters in Table \ref{table: simple parameter} 
    exist and satisfy the conditions 
    in Definition \ref{def: A-packet} can be checked immediately. 
    For example, in the case of $(\sigma_{(n+3)/2}, (n+1)/2)$, 
    we have $m_1d_1=2\cdot (n+1)/2=n+1$ and 
    $d_1=(n+1)/2$ is odd by our condition $n\equiv 1 \bmod 8$. 
    The bound $n\geq 33$ comes from the bound $(n+3)/2\geq 12, \ne 14$ 
    of weight of cusp forms for $\SL_2(\Z)$. 
    In the case of 
    $(\tau_{(n-3)/4, (n+9)/4}, (n+1)/4)$, 
    the bound $n\geq 35$ comes from Table \ref{table_existence_SMF}. 

The localizations of these $A$-parameters at $\infty$ are 
\begin{equation*}
(\rho_{(n+1)/2}, \: (n+1)/2), 
\end{equation*}
\begin{equation*}
(\rho_{(3n+3)/4}, (n+1)/4) \oplus (\rho_{(n+1)/4}, (n+1)/4), 
\end{equation*}
\begin{equation*}
(\rho_{(n+1+\ell)/2}, \, (n+1-\ell)/2) \oplus (\mathbf{1}, \ell).  
\end{equation*}
Clearly these satisfy the condition in Definition 
\ref{def: AJ} and the conditions 
\eqref{eqn: AJ equality} or \eqref{eqn: AJ equality II}. 
It remains to check $\vep_{\psi}=\eta_{\infty}\circ \Delta_{\infty}$. 
In fact, we have $\vep_{\psi}=\mathbf{1}$ in all cases. 
In the first and second case, this is obvious. 
In the last case, we have $\vep_{\psi}=\mathbf{1}$ because 
$\min (d_1, d_2)=\ell$ is even by our bound $\ell\leq (n+1)/3$. 

Finally, we check $\eta_{\infty}\circ \Delta_{\infty}=\mathbf{1}$. 
In the first and second case, we have 
$\eta_{\infty}\circ \Delta_{\infty}(e_1) = \eta(z_{\psi_{\infty}})=1$. 
In the last case, 
we have $d_0 = \ell \equiv 1-n \bmod 8$. 
By substituting this into \eqref{eqn: etaAJ}, 
we see that $\eta_{\infty}=\mathbf{1}$. 
\end{proof}

\begin{proof}[(Proof of Proposition \ref{prop: input k odd})]
    That the $A$-parameters in Table \ref{table: input odd k} 
    exist and satisfy the conditions in 
    Definition \ref{def: A-packet} can be checked 
    as before.  
    For example, in the case $(n, k)\equiv (1, 1)$, 
    the bound $d\leq (n-21)/2$ comes from the bound 
    $(n+1)/2-d \geq 12, \ne 14$ of weight of cusp forms for 
    $\SL_2(\Z)$ in the last component of the $A$-parameter. 

    The localizations of these $A$-parameters 
    at $\infty$ are calculated as 
    \begin{equation*}
        (\rho_{n+1-d}, \, d) \oplus (\rho_{n-2d}, \, 1) 
        \oplus (\rho_{(n-1)/2-d}, \, (n-1)/2-d) 
    \end{equation*}
    \begin{equation*}
        (\rho_{n+1-d}, \, d) \oplus 
        (\rho_{(n+1)/2-d}, (n+1)/2-d) 
    \end{equation*}
    \begin{equation*}    (\rho_{n-d+1}, \, d) \oplus 
        (\rho_{n-3d+1}, \, d) \oplus 
        (\rho_{(n+1)/2-2d}, \,(n+1)/2-2d)   
    \end{equation*}
    \begin{equation*}    (\rho_{n-d+1},  d) \oplus 
        (\rho_{n-3d+1},  d) \oplus 
        (\rho_{n-4d},  1) \oplus 
        (\rho_{(n-1)/2-2d},  (n-1)/2-2d).  
\end{equation*}
It is straightforward to verify the condition in 
Definition \ref{def: AJ} and \eqref{eqn: AJ equality}. 

Finally, we check $\vep_{\psi}=\eta_{\infty}\circ \Delta_{\infty}$. 
We have $\vep_{\psi}=\mathbf{1}$ in all cases 
because all indices $d_i$ are odd so that 
$\vep(\pi_i \times \pi_j)=1$ for any $i\ne j$. 
It remains to verify $\eta_{\infty}\circ \Delta_{\infty}=\mathbf{1}$. 
In the case $k\equiv 1$, we see that $\eta_{\infty}=\mathbf{1}$ 
by substituting $d_1\equiv d_2 \equiv 1$ (resp.~ $d_1\equiv 3$) mod $4$ 
into \eqref{eqn: etaAJ} in the case $n\equiv 1$ (resp.~$n\equiv 3$). 
Next we consider the case $(n, k)\equiv (1, 3)$. 
We have 
$\Delta_{\infty}(e_1) = e_{1,\infty} + e_{2,\infty}$ and 
$\Delta_{\infty}(e_2) = e_{3,\infty}$. 
By our condition, we have $d_1 = d_2 \equiv 3 \bmod 4$. 
Thus, by \eqref{eqn: etaAJ}, we have 
$\eta_{\infty}(e_{1,\infty}) = \eta_{\infty}(e_{2,\infty})=-1$ 
and $\eta_{\infty}(e_{3,\infty})=1$. 
Hence $\eta_{\infty}\circ \Delta_{\infty}=\mathbf{1}$ holds. 
The case $(n, k)\equiv (3, 3)$ is similar. 
\end{proof}

\begin{proof}[(Proof of Proposition \ref{prop: input k even})]
Verification of the conditions in Definition \ref{def: A-packet}, 
Definition \ref{def: AJ}, and 
\eqref{eqn: AJ equality} or \eqref{eqn: AJ equality II} is 
similar to the previous cases and left to the readers. 
We verify 
$\vep_{\psi}=\eta_\infty \circ \Delta_\infty$ 
in the cases $k\equiv 6$ and $(n, k)\equiv (1, 4)$ as samples. 
The second case is chosen because $\vep_{\psi}$ is nontrivial. 

We begin with the case $k\equiv 6$, where  
\[
\psi= (\Sym^2\sigma_{(n+3-d)/2}, \: d) \oplus 
(\sigma_{(n+3-d)/2}, \: (n+1-3d)/2), 
\]
\[
\psi_{\infty} = (\rho_{n+1-d}, \: d) \oplus (\rho_{(n+1-d)/2}, \: (n+1-3d)/2) 
\oplus (\sgn, \: d). 
\]
We have $\Delta_{\infty}(e_1)=e_{1,\infty}+e_{0,\infty}$ and 
$\Delta_{\infty}(e_2)=e_{2,\infty}$. 
We have $\vep_{\psi}=\mathbf{1}$ because 
$\min(d, (n+1-3d)/2)=d$ is even by our assumption 
$d\leq (n+1)/5$. 
On the other hand, 
calculating \eqref{eqn: etaAJ} with our condition $d\equiv n+3 \bmod 8$, 
we see that 
$\eta_{\infty}(e_{0,\infty}) = \eta_{\infty}(e_{1,\infty})$ 
and $\eta_{\infty}(e_{2,\infty})=1$. 
Hence $\eta_{\infty} \circ \Delta_{\infty}=\mathbf{1}$. 

Next we consider the case $(n, k)\equiv (1, 4)$, where 
\[ 
\psi = (\xi_{2d, (n-5d+3)/2}, d) \oplus (\sigma_{n-3d+1}, d+1) 
    \oplus \bigoplus_{m=1}^{\alpha} (\sigma_{n-6d+1-2m}, 1) \oplus (\sigma_{d+6}, 5)
    \]
     \[      
\psi_{\infty} =   
(\rho_{n-d+1}, d) \oplus (\rho_{n-3d}, d+1) 
    \oplus (\rho_{n-5d-1}, d) 
     \oplus \bigoplus_{m=1}^{\alpha} 
    (\rho_{n-6d-2m}, 1) \oplus (\rho_{d+5}, 5) \oplus (\mathbf{1}, d). 
    \]
Here we write $\alpha=(n-7d-11)/2$. 
The bound $d\leq (n-11)/7$ is required for the existence of 
the component $\oplus_{m=1}^{\alpha}$. 
Note that $\alpha\equiv 2 \bmod 4$. 
    We have 
    \[
    \Delta_\infty(e_{1}) = e_{1,\infty} + e_{3,\infty} + e_{0,\infty}, \quad  \Delta_\infty(e_{2}) = e_{2,\infty}, \quad 
    \Delta_\infty(e_{j}) = e_{j+1,\infty} \: \: (j>2). 
    \]
    Since $(n, d) \equiv (1, 6) \bmod 8$, 
    the character $\eta_\infty$ is calculated as
    \[
    \eta_{\infty}(e_{0,\infty})=1,\quad \eta_\infty(e_{1,\infty}) = 1, \quad  \eta_\infty(e_{2,\infty}) = 1, \quad  \eta_{\infty}(e_{3,\infty}) = -1
    \]
    and $\eta_{\infty}(e_{j,\infty}) = (-1)^{j}$ 
    for $j > 3$.
    Hence we have
    \begin{equation*}
\eta_\infty \circ \Delta_\infty(e_{i}) =
    \begin{cases}
        -1 & \quad i=1\\
        1 & \quad i=2\\
        (-1)^{i+1} & \quad i\geq 3.
    \end{cases}
    \end{equation*}

Next we calculate $\vep_\psi$. 
We need the following. 

\begin{claim}
We have 
$\vep(\xi_{2d, (n-5d+3)/2} \times \sigma_{k}) = (-1)^{k/2}$ 
for even $k$ with $k \leq n-5d$. 
\end{claim}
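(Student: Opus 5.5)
The plan is to reduce the global root number to a purely archimedean computation and then evaluate it with the standard formula for root numbers of representations of $W_{\R}$. Both $\xi_{2d,(n-5d+3)/2}$ and $\sigma_k$ are unramified at every prime $p<\infty$, since they come from level-one forms for $\Sp_4(\Z)$ and $\SL_2(\Z)$. So the local Rankin--Selberg factors $L(s,\xi_p\times\sigma_{k,p})$ are unramified. Their local root numbers are then $1$, with the standard additive character of conductor $\Z_p$. Hence
\[
\vep(\xi_{2d,(n-5d+3)/2}\times\sigma_k)=\vep_\infty(\phi_\infty),
\]
where $\phi_\infty$ is the tensor product of the archimedean $L$-parameters. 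I will use compatibility of the Rankin--Selberg local factors with the local Langlands correspondence (Shahidi / Jacquet--Piatetski-Shapiro--Shalika) to compute this at $\infty$ via Galois-side $\varepsilon$-factors, as in \cite{CL} p.203.

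From Table \ref{table: building block}, with $j=2d$ and $k'=(n-5d+3)/2$, the archimedean parameter of $\xi$ is $\rho_A\oplus\rho_B\oplus\mathbf{1}$. Here $A=2k'+2j-2=n-d+1$ and $B=2k'-4=n-5d-1$. The parameter of $\sigma_k$ is $\rho_{k-1}$. I will use two standard facts:
\begin{itemize}
\item $\vep(\rho_w)=i^{w+1}$ for $w>0$;
\item $\rho_a\otimes\rho_b\simeq\rho_{a+b}\oplus\rho_{|a-b|}$ for $a\ne b$.
\end{itemize}
Since $\vep$ is multiplicative in direct sums, they give
\[
\vep(\rho_a\otimes\rho_b)=i^{a+b+|a-b|+2}=(-1)^{\max(a,b)+1}.
\]
It also holds that $\vep(\mathbf{1}\otimes\rho_{k-1})=i^{k}$.

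The hypothesis $k\le n-5d$ gives $k-1\le B<A$, so both maxima are attained by the $\xi$-weights. Here I must make sure $k-1\neq B$, i.e.\ $k\neq n-5d$. Parity settles this: $k$ is even, while $n-5d$ is odd because $d$ is even in the case $(n,k)\equiv(1,4)$. Therefore
\[
\vep_\infty=(-1)^{A+1}(-1)^{B+1}\, i^{k}=(-1)^{A+B}(-1)^{k/2}=(-1)^{k/2},
\]
since $A+B=2n-6d$ is even.

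The only real obstacle is justifying the convention: namely that the Rankin--Selberg root number in \eqref{def_vep_psi} equals the product of Artin--Tate local root numbers of the tensor-product parameter, with the finite-place contributions trivial. I will cite Arthur's normalization together with \cite{CL} \S 8.3.5 and p.203, where exactly this archimedean reduction is carried out for level-one parameters. The rest is the bookkeeping above.
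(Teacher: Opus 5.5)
Your proposal is correct and is essentially the paper's own argument: both reduce the root number to the archimedean factor via \cite{CL} p.203 and evaluate $(-1)^{A+1}(-1)^{B+1}i^{k}$ with $A=n-d+1$ and $B=n-5d-1$. Your parity check excluding $k=n-5d$ is harmless but unnecessary, for two reasons: $n-5d$ is automatically odd whenever $(n-5d+3)/2$ is an integer, and in the equal case one still gets $\vep(\rho_B\otimes\rho_B)=\vep(\rho_{2B})\,\vep(\mathbf{1})\,\vep(\sgn)=(-1)^{B+1}$.
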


\begin{proof}
Following the procedure described in \cite{CL} p.203, 
we can calculate 
    \begin{align*}
    &\vep(\xi_{2d, (n-5d+3)/2} \times \sigma_{k}) = 
    \vep_\infty((\rho_{n-d+1} \oplus \rho_{n-5d-1} \oplus \mathbf{1}) \otimes \rho_{k-1})\\
    &=(-1)^{1+n-d+1} \cdot (-1)^{1+n-5d-1} \cdot (-1)^{k/2} 
    \; = \; (-1)^{k/2}. 
    \end{align*}    
\end{proof}

Let us abbreviate $\xi=\xi_{2d, (n-5d+3)/2}$. 
Then we can calculate $\vep_\psi(e_{2}) = 1$ (as before) and 
$$ 
\vep_{\psi}(e_{\textrm{last}}) = 
\vep(\xi \times \sigma_{d+6}) = (-1)^{(d+6)/2} = 1, 
$$
$$
\vep_{\psi}(e_{1}) = 
\prod_{m=1}^{\alpha}\vep(\xi \times \sigma_{n-6d+1-2m}) 
\cdot \vep(\xi \times \sigma_{d+6}) 
= \prod_{m=1}^{\alpha} (-1)^{1+m} = -1, 
$$
$$
\vep_\psi(e_{j}) = \vep(\xi \times \sigma_{n-6d+1-2j}) 
= (-1)^{(n-6d+1-2j)/2} = (-1)^{j+1}
$$
for $3\leq j < \textrm{last}$. 
This agrees with $\eta_\infty \circ \Delta_\infty$. 
\end{proof}

\begin{proof}[(Proof of Proposition \ref{prop: input n small})]
    This is similar to the previous cases. 
    Let us just verify 
    $\vep_{\psi}=\eta_{\infty}\circ \Delta_{\infty}$  
    in the case $(n, k)=(27, 23)$ as a sample,  
    where 
    \[
    \psi=(\tau_{8,9}, 5) \oplus (\sigma_{18}, 1) \oplus (\sigma_{16}, 1) \oplus (\mathbf{1}, 4), 
    \]
    \[
    \psi_{\infty} = 
    (\rho_{23}, 5) \oplus (\rho_{17}, 1) \oplus (\rho_{15}, 1) 
    \oplus (\rho_{9}, 5) \oplus (\mathbf{1}, 4).
    \]
    We have
    \[
    \Delta_{\infty}(e_1)=e_{1,\infty}+e_{4,\infty}, \quad 
    \Delta_{\infty}(e_2)=e_{2,\infty}, \quad 
    \Delta_{\infty}(e_3)=e_{3,\infty}, \quad 
    \Delta_{\infty}(e_0)=e_{0,\infty}. 
    \]
    The character $\vep_\psi$ is calculated as 
    \[
    \vep_{\psi}(e_i)=
    \begin{cases}
        1 & \quad i=1, 3 \\
        -1 & \quad i=0, 2 
    \end{cases}
    \] 
    by the parity of $d_i$ and 
    $\vep(\sigma_k \times \mathbf{1}) = (-1)^{k/2}$ 
    (see \cite{CL} p.203). 
    On the other hand, by calculating \eqref{eqn: etaAJ}, 
    we obtain  
    \[
    \eta_{\infty}(e_{i,\infty})=
    \begin{cases}
        1 & \quad i=3 \\
        -1 & \quad i=1, 2, 4, 0 
    \end{cases}
    \]
    From this we see that 
    $\eta_{\infty}\circ \Delta_{\infty}=\vep_{\psi}$ 
    holds. 
\end{proof}

\subsection{A classification}\label{ssec: classify}

Finally, we derive a converse result in the simplest case 
in Proposition \ref{prop: k minimum}.  

\begin{proposition}\label{cor: exact isom}
    Let $n \equiv 1 \bmod 8$. 
    The space of square-integrable forms in 
    $H^0(\calF_n, \Omega^{(n+1)/2})$ is isomorphic to 
    the space of cusp forms of weight $(n+3)/2$ for $\SL_2(\bbZ)$. 
\end{proposition}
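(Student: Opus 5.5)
The plan is to turn both sides into counts of automorphic representations and to match them through Arthur's classification. This uses the full multiplicity formula, not just the inclusion extracted in Theorem \ref{theorem_AMF}. Put $k=(n+1)/2$ and $\ell=n-k=(n-1)/2$.

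First, combining Proposition \ref{prop: MF to hol form} with Lemma \ref{lem: lift to G}, the square-integrable $k$-forms are identified with the space of functions $\phi$ satisfying (1)--(4) there, for $\Gamma={\SO}^{+}(L)$. By Proposition \ref{prop: auto form to LWM} and Remark \ref{rmk: irr LWM}, this space is isomorphic to ${\rm Hom}_{(\frakg,K_\infty^+)}\bigl(L(\wedge^{\ell},-k),\,L^2_{disc}({\SO}^+(L)\backslash{\SO}^+(L\otimes\R))\bigr)$. Via Corollary \ref{cor: class number 1}, this Hom space is the $K_{\rm fin}$-invariant part of the corresponding isotypic piece of $L^2_{disc}({\SO}(L\otimes\Q)\backslash{\SO}(L\otimes\A))$. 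Its dimension is therefore
\[
\sum_{\pi} m(\pi)\,\dim \bigl(\otimes_{p<\infty}\pi_p\bigr)^{K_{\rm fin}},
\]
where $\pi$ runs over discrete automorphic representations with $\pi_\infty\simeq L(\wedge^{\ell},-k)$.

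Next, I would classify the $\pi$ that contribute. By Arthur's decomposition of $L^2_{disc}$ into $\psi$-parts, each such $\pi$ lies in $\Pi(\psi)$ for some global $\psi$. Unramifiedness at every $p<\infty$ forces $\psi_p$ to be unramified and $\eta_p=\mathbf{1}$, by Remark \ref{remark: unramified local}. Since $\pi_\infty$ has the regular integral infinitesimal character \eqref{eqn: inf chara trivial} (Lemma \ref{lem: inf chara}), $\psi_\infty$ has the same infinitesimal character. For such parameters the real $A$-packets are of Adams--Johnson type, and I would cite Arthur/Ta\"ibi for this standard fact.

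Lemma \ref{lem_lowest_degree} then forces $\psi_\infty=\rho_{(n+1)/2}\boxtimes\Sym^{(n-1)/2}$. This localization consists of a single block of size $d=(n+1)/2$ whose $W_\R$-part is $2$-dimensional. Hence the global parameter is $\psi=(\pi_1,(n+1)/2)$, with $\pi_1$ a symplectic cuspidal representation of $\GL_2(\A)$ that is unramified everywhere and has archimedean parameter $\rho_{(n+1)/2}$. Such $\pi_1$ are exactly the $\sigma_{(n+3)/2}$ attached to normalized Hecke eigenforms of weight $(n+3)/2$ for $\SL_2(\Z)$, by the classical dictionary together with strong multiplicity one.

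Conversely, for each such eigenform we have $r=1$, so $\vep_\psi=\mathbf{1}$ and $\eta_\infty\circ\Delta_\infty(e_1)=\eta_\infty(z_{\psi_\infty})=1$ (Proposition \ref{lemma_Adams_Johnson_packets}). The proof of Proposition \ref{prop: k minimum} therefore yields an automorphic $\pi(\psi,\eta)$. Its multiplicity is $1$ by the multiplicity formula, since ${\SO}$ is odd, $\iota$ is injective, and $L(\wedge^\ell,-k)$ occurs once in $\Pi(\psi_\infty)$. Moreover $\dim\pi_p^{K_p}=1$ at every $p$, because $K_p$ is hyperspecial and $\pi_p$ is unramified. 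Distinct eigenforms give distinct $\psi$, hence distinct $\pi$. The total dimension therefore equals the number of eigenforms, which is $\dim S_{(n+3)/2}(\SL_2(\Z))$. Sending each eigenform to the lowest weight vector of the corresponding $\pi$ gives the isomorphism.

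The main obstacle is exhaustiveness. Theorem \ref{theorem_AMF} as quoted only gives an inclusion, so I need the full statement that every discrete $\pi$ lies in some $\Pi(\psi)$ with the multiplicity formula holding. This is Ishimoto's theorem, but only under Hypothesis 7.1 there. I also need to know that a $\psi_\infty$ with this regular infinitesimal character has $A$-packet equal to the Adams--Johnson packet, so that Lemma \ref{lem_lowest_degree} applies. My first attempt would be to verify Hypothesis 7.1 directly from the regularity of the infinitesimal character, using Ta\"ibi's results for regular algebraic parameters.
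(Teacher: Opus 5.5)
Your proposal is correct and follows essentially the same route as the paper. Both reduce via \S 2--\S 3 to counting unramified $\pi$ with $\pi_\infty\simeq L(\wedge^{(n-1)/2},-(n+1)/2)$, use Remark \ref{remark: unramified local} and Lemma \ref{lem_lowest_degree} to force $\psi=(\sigma_{(n+3)/2},(n+1)/2)$, and conclude with multiplicity one from the full multiplicity formula and $\dim\pi_p^{K_p}=1$ for hyperspecial $K_p$. The obstacle you flag is handled in the paper just as you propose: regularity forces $\psi_\infty$ to be Adams--Johnson, a step the paper leaves implicit when applying Lemma \ref{lem_lowest_degree}, and such parameters satisfy Ishimoto's Hypothesis 7.1, as the paper notes in the proof of Theorem \ref{theorem_AMF}.
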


\begin{proof}
By the argument in \S \ref{sec: auto rep to hol form},   
the dimension of the space of square-integrable forms in 
$H^0(\mathcal{F}_n, \Omega^{(n+1)/2})$ is equal to 
the multiplicity of $L(\wedge^{n-k}, -k)$ in 
$L^2_{disc}({\SO}^{+}(L)\backslash {\SO}^{+}(L\otimes \R))$. 
By the argument in \S \ref{sec: adelize}, 
we can pass from  
$$
L^2_{disc, \A} := L^2_{disc}({\SO}(L\otimes \Q)\backslash {\SO}(L\otimes \A)) 
$$ 
to $L^2_{disc}({\SO}^{+}(L)\backslash {\SO}^{+}(L\otimes \R))$ 
by taking the invariant part for $\prod_{p<\infty}{\SO}(L\otimes \Z_{p})$. 
Since ${\SO}(L\otimes \Z_{p})$ is hyperspecial, 
the ${\SO}(L\otimes \Z_{p})$-invariant part of 
an irreducible unramified representation of ${\SO}(L\otimes \Q_{p})$ 
has dimension $1$ (see \cite{GH} Corollary 5.6). 
Hence, by (the full version of) Theorem \ref{theorem_AMF} (\cite{2024_Ishimoto_Arthur}), 
we find that the desired multiplicity 
is equal to the number of $(\psi, \eta)$ contributing to 
$L^2_{disc, \A}$ such that 
$\pi(\psi, \eta)$ is unramified and 
$\pi(\psi_{\infty}, \eta_{\infty})\simeq 
L(\wedge^{n-k}, -k)$. 
By Remark \ref{remark: unramified local}, 
$\psi$ is unramified and $\eta_p=\mathbf{1}$ for $p<\infty$. 
By Lemma \ref{lem_lowest_degree}, 
we have 
\begin{equation}\label{eqn: psi infty special}
\psi_{\infty} = \rho_{(n+1)/2}\boxtimes {\rm Sym}^{(n-1)/2}. 
\end{equation}
Hence the desired multiplicity is equal to 
the number of global $A$-parameters $\psi$ 
which is unramified, satisfies \eqref{eqn: psi infty special}, 
and contributes to $L^2_{disc, \A}$. 
If we write $\psi=\oplus_i(\pi_i, d_i)$, 
then \eqref{eqn: psi infty special} shows that 
$d_i=(n+1)/2$ for some index $i$. 
The equality $\sum_i m_i d_i = n+1$ implies that 
there is no other index and $m_i=2$. 
Hence $\psi=(\pi_f, (n+1)/2)$ 
for an eigenform $f$ of weight $(n+3)/2$ for ${\rm SL}_2(\Z)$. 
Such $\psi$ indeed contributes to $L^2_{disc, \A}$ 
by Proposition \ref{prop: k minimum}, 
and with multiplicity $1$ by the Arthur's multiplicity formula. 
\end{proof} 

\appendix 

\section{Split orthogonal groups}\label{sec: everywhere split}

In this section, 
we classify rational quadratic forms of signature $(2, n)$ 
whose orthogonal group over ${\Qp}$ is split for any prime $p<\infty$. 
Since this plays only an auxiliary role in \S \ref{sec: lattice}, 
it is treated as an appendix. 

In what follows, the symbol $U$ stands for the hyperbolic plane over 
any given field of characteristic $0$ ($\Q$ or ${\Qp}$), 
namely the quadratic space expressed by the Gram matrix 
$\begin{pmatrix} 0 & 1 \\ 1 & 0 \end{pmatrix}$. 
(Note that this is different from the notation in 
\S \ref{sec: intro} and \S \ref{sec: lattice} where we worked integrally.) 
The discriminant of a quadratic space $V$ over a field 
$F$ is denoted by 
$\det V \in F^{\times}/(F^{\times})^2$.
All quadratic spaces are assumed to be nondegenerate. 

Let $V_{p}$ be a quadratic space over ${\Qp}$ for a prime $p<\infty$. 
The special orthogonal group ${\SO}(V_{p})$ 
is said to be \textit{split} 
if it has a maximal torus which splits over ${\Qp}$. 
It is well-known (see \cite{Bo} \S 23.4) that 
${\SO}(V_p)$ is split if and only if 
\begin{equation}\label{eqn: split quadratic form local}
V_{p} \; \simeq \; 
\begin{cases}
U \oplus \cdots \oplus U & \dim V_p \; \textrm{even} \\
U \oplus \cdots \oplus U \oplus \langle a \rangle & \dim V_p \; \textrm{odd} 
\end{cases}
\end{equation}
where $a \in {\Q}_{p}^{\times}$.

\begin{proposition}\label{prop: everywhere split}
Let $V$ be a rational quadratic space of signature $(2, n)$. 
The property 
\begin{equation}\label{eqn: split over any p}
{\SO}(V\otimes {\Qp}) \; \textrm{is split for every finite prime} \; p 
\end{equation}
holds if and only if $V$ is isometric to one of the following quadratic spaces: 
\begin{equation}\label{eqn: split quadratic form list}
\begin{cases}
2 \langle 1 \rangle  \oplus (8m+2) \langle -1 \rangle & n\equiv 2 \; {\rm mod} \; 8 \\
2 \langle 1 \rangle  \oplus (8m+2) \langle -1 \rangle \oplus \langle -d \rangle & n\equiv 3 \; {\rm mod} \; 8  \\
2 \langle 1 \rangle  \oplus (8m-6) \langle -1 \rangle \oplus K_{d} & n\equiv 1 \; {\rm mod} \; 8 
\end{cases} 
\end{equation} 
Here $d>0$ is some natural number and  
$K_{d}$ is the orthogonal complement of an embedding 
$\langle -d \rangle\hookrightarrow 8 \langle -1 \rangle$. 
\end{proposition}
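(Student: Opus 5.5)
The plan is to reduce everything to the Hasse--Minkowski classification: a rational quadratic space is determined up to isometry by its dimension, its discriminant in $\Q^{\times}/(\Q^{\times})^2$, its signature, and its Hasse invariants $c_p$ at all $p\leq\infty$. Write $N=n+2=\dim V$. By \eqref{eqn: split quadratic form local}, I will turn the local splitness condition \eqref{eqn: split over any p} into conditions on these invariants. Hilbert reciprocity $\prod_{p\leq\infty}c_p=1$ then forces a condition at $p=\infty$, which the signature $(2,n)$ makes into a congruence on $n$.

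\emph{Even $N$.} Here $V_p\simeq (N/2)U$ holds exactly when two things hold: $\det V_p=(-1)^{N/2}$ in $\Q_p^{\times}/(\Q_p^{\times})^2$, and $c_p(V)=c_p((N/2)U)$.
\begin{itemize}
\item The first condition for all finite $p$ forces $(-1)^{N/2}\det V$ to be a local square at every finite $p$. It is therefore a square up to sign, and comparing signs ($\det V$ has sign $(-1)^n$) gives $\det V=(-1)^{N/2}$ in $\Q^\times/(\Q^\times)^2$ and $n\equiv 2 \bmod 4$.
\item Since $\det V=\det (N/2)U$ globally and $c_p$ agrees for every finite $p$, reciprocity gives $c_\infty(V)=c_\infty((N/2)U)$.
\item Computing $c_\infty$ of the real forms of signature $(2,n)$ and $(N/2,N/2)$ (via the count of pairs of negative entries in a diagonalization) turns this into $n\equiv 2\bmod 8$.
\item Conversely, the listed space $2\langle 1\rangle\oplus(8m+2)\langle -1\rangle$ has the same invariants, so it is isometric to $V$.
\end{itemize}

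\emph{Odd $N$.} Set $\delta=(-1)^{(N-1)/2}\det V$, i.e.\ $\det V=(-1)^{(N-1)/2}\delta$. Then $V_p\simeq \tfrac{N-1}{2}U\oplus\langle a\rangle$ forces $a=\delta$ up to squares. The condition at $p$ therefore becomes $c_p(V)=c_p(\tfrac{N-1}{2}U\oplus\langle\delta\rangle)$, with no constraint on the discriminant.
\begin{itemize}
\item Reciprocity again forces equality of $c_\infty$. Here $\delta$ can be any nonzero rational whose sign is determined by $n$, so write $\delta=\pm d$ with $d>0$ a natural number.
\item The real Hasse invariant computation, done separately for $n\equiv 1,3,5,7 \bmod 8$, should show that $n\equiv 5,7$ are impossible and $n\equiv 1,3$ are allowed.
\item For $n\equiv 3$, the space $2\langle1\rangle\oplus(8m+2)\langle-1\rangle\oplus\langle -d\rangle$ is (even-dim split space)$\oplus\langle -d\rangle$. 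It is split at every $p$ and has the right discriminant and signature, so Hasse--Minkowski identifies $V$ with it.
\item For $n\equiv 1$, use $K_d\oplus\langle -d\rangle\simeq 8\langle-1\rangle$ over $\Q$. This shows that $2\langle1\rangle\oplus(8m-6)\langle-1\rangle\oplus K_d$ plus $\langle -d\rangle$ is $2\langle1\rangle\oplus(8m+2)\langle-1\rangle$, which is split everywhere. By Witt cancellation, the space itself is $\cdots U\oplus\langle d'\rangle$ locally, hence split. It has discriminant of the right class, so it matches $V$.
\item Existence of the embedding $\langle -d\rangle\hookrightarrow 8\langle -1\rangle$ for every $d>0$ follows from the fact that $8\langle 1\rangle$ represents every positive integer.
\end{itemize}
The converse direction, that each listed space satisfies \eqref{eqn: split over any p}, comes out of the same local computations.

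The main obstacle is the bookkeeping of Hasse invariants at $\infty$, and at $p=2$ implicitly through reciprocity. One has to fix a convention for $c_p$ and compute, as a function of $n \bmod 8$ and of the sign of $\delta$, the invariants of $\tfrac{N-1}{2}U\oplus\langle\delta\rangle$ over $\R$ versus those of signature $(2,n)$. I would first do this with the convention $c(\oplus\langle a_i\rangle)=\prod_{i<j}(a_i,a_j)$ and $c_\infty(U)=(1,-1)_\infty=1$, check it against $II_{2,26}$, and then read off the congruences.
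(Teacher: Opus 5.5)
Your proposal is correct and follows essentially the same route as the paper. Splitness at each $p<\infty$ fixes $V\otimes\Qp$ in terms of $\det V$. The sign of $\det V$ together with Hilbert reciprocity rules out $n\equiv 0 \bmod 4$, $n\equiv 6 \bmod 8$ and $n\equiv 5,7 \bmod 8$. Hasse--Minkowski then gives uniqueness for a fixed discriminant. Your deferred real computation does come out as needed: $c_\infty(V)=c_\infty(\tfrac{n+1}{2}U\oplus\langle\delta\rangle)$ holds exactly when $n\equiv 1,3 \bmod 8$.

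One step needs tightening. In the even case, ``a square up to sign'' must be upgraded to ``a square'', using that $-1$ is not a square in $\mathbb{Q}_3$ (equivalently, injectivity of $\Q^{\times}/(\Q^{\times})^2\to\prod_{p<\infty}\Q_p^{\times}/(\Q_p^{\times})^2$, as in the paper). Comparing signs alone does not force $N\equiv 0 \bmod 4$.
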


We can also extend this classification from 
split to quasi-split (when $n$ is even), 
but since this is longer with less relevance to other parts of the paper, 
we omit it. 

For the proof we need to recall the Hasse invariants. 
We refer to \cite{Ge} Chapter 4 for what follows. 
Let $V_{p}$ be a quadratic space over ${\Qp}$ for a prime $p< \infty$. 
We choose an orthogonal basis 
$V_{p} \simeq \langle a_1, \cdots, a_{N} \rangle$ 
where $a_{i}\in {\Q}_{p}^{\times}$. 
When $\dim V_p >1$, 
the Hasse invariant of $V_{p}$ is defined by 
\begin{equation}\label{eqn: Hasse invariant}
\vep_p(V_p) = \prod_{i<j} (a_i, a_j)_{p} \; \; \in \{ \pm 1\},  
\end{equation}
where $(a, b)_{p}\in \{ \pm 1\}$ is the Hilbert symbol. 
This is independent of the choice of orthogonal basis. 
We set $\vep_p (V_p)=1$ when $\dim V_p=1$. 
The isometry class of $V_p$ is determined by the triplet 
\begin{equation*}
(\dim V_p, \: \vep_p (V_p), \; \det V_p). 
\end{equation*}
The Hasse invariant satisfies the product formula 
\begin{equation*}\label{eqn: Hasse product}
\vep_p(V_p \oplus W_p) \; = \; 
\vep_p (V_p) \cdot \vep_p (W_p) \cdot (\det V_p, \det W_p)_{p}. 
\end{equation*}
The special case 
\begin{equation}\label{eqn: Hasse mU}
\vep_p(kU) = (-1, -1)_{p}^{k(k-1)/2}, \quad k>0, 
\end{equation}
will be used repeatedly in the following. 

The Hasse invariant $\vep_{\infty}(V_{\infty})$ 
of a real quadratic space $V_{\infty}$ is defined similarly by \eqref{eqn: Hasse invariant}, 
where $(a, b)_{\infty}=-1$ precisely when both $a$ and $b$ are negative.  
When $V_{\infty}$ has signature $(\ast, n)$, we have 
$\vep_{\infty}(V_{\infty})=(-1)^{n(n-1)/2}$. 

Let $V$ be a quadratic space over ${\Q}$. 
We write $\vep_p(V)= \vep_p(V\otimes {\Qp})$. 
We have $\vep_p(V)\ne 1$ for only finitely many $p$. 
Then the Hilbert reciprocity 
\begin{equation*}\label{eqn: reciprocity}
\prod_{p\leq \infty} \vep_p (V) =1 
\end{equation*}
holds. 

We can now give the proof of Proposition \ref{prop: everywhere split}. 

\begin{proof}[(Proof of Proposition \ref{prop: everywhere split})]
The proof is divided into several steps. 

\begin{step}
The quadratic spaces \eqref{eqn: split quadratic form list} satisfy \eqref{eqn: split over any p}. 
\end{step}

\begin{proof}
We have  
$8 \langle -1 \rangle \simeq 4U$ over ${\Q}_{p}$ for every $p<\infty$. 
Then $K_d\otimes {\Qp} \simeq 3U \oplus \langle d \rangle$ 
by the Witt cancellation. 
This proves our assertion. 
\end{proof}

The rest of the proof is devoted to verifying the ``only if" direction. 
We first consider the case $n$ even. 

\begin{step}
When $n\equiv 2$ mod $8$, 
the quadratic space $2 \langle 1 \rangle  \oplus (8m+2) \langle -1 \rangle$ 
is the only one satisfying \eqref{eqn: split over any p}. 
\end{step}

\begin{proof}
The condition \eqref{eqn: split quadratic form local} determines $V\otimes {\Qp}$ for every $p<\infty$,  
while the signature condition determines $V\otimes {\R}$. 
Then $V$ is uniquely determined by the Hasse-Minkowski theorem. 
\end{proof}

\begin{step}
When $n\equiv 0$ mod $4$, there is no quadratic space $V$ satisfying \eqref{eqn: split over any p}. 
\end{step}

\begin{proof}
By the condition \eqref{eqn: split quadratic form local}, 
$V\otimes {\Qp}$ is a direct sum of odd numbers of $U$, 
so we have $\det (V\otimes {\Qp})=-1$ in ${\Q}_{p}^{\times}/({\Q}_{p}^{\times})^2$. 
Since the natural map 
\begin{equation*}
{\Q}^{\times}/({\Q}^{\times})^2 \to \prod_{p<\infty} {\Q}_{p}^{\times}/({\Q}_{p}^{\times})^2 
\end{equation*}
is injective, we have $\det V=-1$ in ${\Q}^{\times}/({\Q}^{\times})^2$. 
However, since $V\otimes {\R}$ has signature $(2, n)$, $\det V$ must be positive. 
This is absurd. 
\end{proof}

\begin{step}
When $n\equiv 6$ mod $8$, there is no quadratic space $V$ 
satisfying \eqref{eqn: split over any p}. 
\end{step}

\begin{proof}
In this case, $V\otimes {\Qp}$ is a direct sum of copies of $4U$, 
so we have $\vep_p(V)=1$ for any $p<\infty$ by \eqref{eqn: Hasse mU}. 
On the other hand, since $V$ has signature $(2, n)$ with 
$n\equiv 2$ mod $4$, we have 
$\vep_{\infty}(V)= (-1)^{n(n-1)/2}=-1$. 
This violates the Hilbert reciprocity. 
\end{proof}

The proof in the case of even $n$ is now finished. 
Next we consider the case when $n$ is odd.

\begin{step}
Let $n$ (odd) and $d\in {\Q}^{\times}/({\Q}^{\times})^2$ be fixed. 
A rational quadratic space $V$ of signature $(2, n)$ and discriminant $d$ 
satisfying \eqref{eqn: split over any p} 
is unique if it exists.  
\end{step}

This assures in the case $n\equiv 1, 3$ mod $8$ that  
the quadratic spaces \eqref{eqn: split quadratic form list} are the only ones satisfying \eqref{eqn: split over any p} 
because they exhaust all possible discriminants. 

\begin{proof}
If we write 
$V\otimes {\Qp} \simeq U \oplus \cdots \oplus U \oplus \langle a \rangle$, 
then $a$ is determined by $d$ and $n$. 
Therefore $V\otimes {\Qp}$ is uniquely determined for every $p<\infty$. 
Since the signature $(2, n)$ is fixed, 
$V$ is uniquely determined by the Hasse-Minkowski theorem. 
\end{proof}

It remains to prove the non-existence in the case $n\equiv 5, 7$ mod $8$. 

\begin{step}\label{step6}
When $n\equiv 5, 7$ mod $8$, there is no quadratic space $V$ satisfying \eqref{eqn: split over any p}. 
\end{step}

\begin{proof}
We have 
\begin{equation}\label{eqn: einftyV}
    \vep_{\infty}(V) = (-1)^{n(n-1)/2} = 
    \begin{cases}
    1 & n \equiv 5 \bmod 8 \\ 
    -1 & n \equiv 7 \bmod 8.
    \end{cases}
    \end{equation}
On the other hand, for $p<\infty$, 
the condition \eqref{eqn: split quadratic form local} says that 
$V\otimes {\Qp} \simeq (4m+\alpha)U\oplus \langle d \rangle$ 
where $\alpha=3$ or $0$ according to $n\equiv 5$ or $7$ mod $8$, 
and $d=\pm \det V$. 
In the case $n\equiv 7 \bmod 8$, 
we can see from the product formula and \eqref{eqn: Hasse mU} that 
$\vep_{p}(V)=\vep_{p}(4mU)=1$ 
for $p<\infty$. 
This violates the Hilbert reciprocity. 

In the case $n\equiv 5 \bmod 8$, 
we have $d=-\det V$, and hence 
\begin{equation*}
\vep_p(V) =  
\vep_p((4m+3)U) \cdot (-1, -\det V)_{p} = 
(-1, \det V)_p 
\end{equation*}
for every $p<\infty$ 
by the product formula and \eqref{eqn: Hasse mU}. 
Hence we have 
$\vep_{\infty}(V)=(-1, \det V)_{\infty}$ 
by the Hilbert reciprocity. 
Since $n$ is odd, $\det V$ is negative, 
and so 
$\vep_{\infty}(V)=-1$. 
This contradicts with \eqref{eqn: einftyV}. 
\end{proof}

The proof of Proposition \ref{prop: everywhere split} is now complete. 
\end{proof}

\section{Kodaira dimension}\label{sec: Kodaira dim}

It is proved by Gritsenko-Hulek-Sankaran \cite{GHS} that  
$\mathcal{F}_n$ with $n\equiv 3 \bmod 8$ is of general type if $n\geq 43$. 
In this section, we consider the case $n\equiv 1 \bmod 8$ 
and prove the following analogous result. 
Since this is independent of (though related to) 
other part of this paper, it is treated as an appendix. 

\begin{proposition}\label{prop: Kodaira dim}
Let $n\equiv 1 \bmod 8$. 
Then $\mathcal{F}_n$ is of general type if $n=25$ or $n\geq 41$. 
\end{proposition}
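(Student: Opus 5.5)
The plan is to use the Gritsenko--Hulek--Sankaran low-weight cusp form trick. The lattice $L = 2U\oplus mE_8\oplus E_7 \simeq U\oplus \langle 2\rangle\oplus (m+1)E_8$ with $n = 8m+9$ embeds into the even unimodular lattice $II_{2,26}$ whenever $n\le 26$. For larger $n$ we use a suitable reflective or Borcherds input instead. We then produce a cusp form of small weight vanishing on the ramification divisor. Concretely, the GHS criterion reads as follows: if there is a nonzero cusp form $F$ for ${\rm O}^{+}(L)$ (with character) of weight $a<n$ that vanishes to order at least one on the ramification divisor of $\mathcal{D}\to\mathcal{F}_n$, then $\mathcal{F}_n$ is of general type. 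This is because $F^{\otimes k}$ times pluricanonical forms of weight $(n-a)k$ gives enough sections, and the GHS extension result handles the cusps and elliptic singularities for $n\ge 9$. So the task reduces to identifying the branch divisor and constructing $F$.

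\textbf{Step one: the ramification divisor.} The reflections in ${\rm O}^{+}(L)$ come from $(-2)$-vectors and from vectors $r$ of norm $-2\cdot\mathrm{disc}$-type with $r/2\in L^\vee$. Since $L^\vee/L\simeq \Z/2$ is generated by a vector of norm $1/2$ (from the $E_7$, or equivalently $\langle 2\rangle$, summand), the candidates are the $(-2)$-roots and the norm $-4$ vectors of divisibility $2$ in $L$ (up to the sign conventions of ${\rm O}^{+}$). I will check which of these actually act as reflections in ${\rm O}^{+}(L)$, following the analysis in \cite{GHS}.

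\textbf{Step two: constructing $F$.} For $n=25$ we have $L=U\oplus\langle 2\rangle\oplus 3E_8$. This is the orthogonal complement of a $\langle -2\rangle$ vector $v$ in $II_{2,26}$. Take the quasi-pullback of the Borcherds form $\Phi_{12}$ to $\mathcal{D}_L$: it has weight $12+N/2$, where $N$ is the number of pairs of roots $\pm r\in II_{2,26}$ orthogonal to $L$, i.e. lying in $\langle v\rangle$. Here $N=1$, giving weight $13<25$. The quasi-pullback is a cusp form (GHS, since $N>0$) and vanishes on all $(-2)$-reflective divisors, because $\Phi_{12}$ vanishes on every root hyperplane. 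The $(-4)$-type branch components need separate treatment: either they correspond to roots of $II_{2,26}$ whose projection is $r/2$, so that vanishing still holds, or one multiplies by an additional form. For $n\ge 41$ the orthogonal-complement trick is unavailable, so I will instead use the Jacobi lifting (Gritsenko lift). Take a Jacobi cusp form of weight $k$ and lattice index $\langle 2\rangle\oplus(m+1)E_8$, formed for instance as $\vartheta$-products times the theta series of $E_8^{m+1}$ times powers of $\eta$, and lift it to a cusp form of weight $k$ roughly $12+$ small. Multiplying by the modular form vanishing on the branch divisor (or using the lift's own divisor) gives the required $F$ of weight well below $n$.

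\textbf{Expected obstacle.} The hardest part is ensuring vanishing along the full ramification divisor, in particular the non-$(-2)$ components, with weight still $<n$. For $n\ge 41$ there is also the problem of producing an explicit small-weight cusp form at all. I would first try the GHS approach of comparing the dimension of the space of cusp forms of weight $a$ (via Hirzebruch--Mumford volume) with the obstruction coming from vanishing on the branch divisor, bounded by restriction to it. If that is too crude, I would fall back on the explicit theta-block Jacobi lifts mentioned above.
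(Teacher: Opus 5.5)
Your treatment of $n=25$ is the paper's. The quasi-pullback of $\Phi_{12}$ along $\langle 2\rangle\oplus U\oplus 3E_8=v^{\perp}\subset II_{2,26}$ is a cusp form of weight $13$, and the low-weight cusp form trick of \cite{GHS07} applies. (The weight is $12$ plus the number of pairs $\pm r$ orthogonal to $L$, not half of it.) The extra branch components you worry about do not occur. A primitive $r\in L$ with ${\rm div}(r)=2$ has $r/2$ in the nontrivial class of $L^{\vee}/L$, whose norm is $1/2 \bmod 2$. Hence $(r,r)\equiv 2 \bmod 8$, and $r^{\perp}$ misses $\mathcal{D}$. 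So the ramification divisor is just the union of the $(-2)$-hyperplanes, which form one orbit by Eichler's criterion. The quasi-pullback vanishes on each of them, since the saturation of $\langle v,r\rangle$ is $A_1\oplus A_1$.

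For $n\geq 41$ there is a genuine gap. Use the paper's normalization $n=8m+1$, $L=\langle 2\rangle\oplus U\oplus mE_8\simeq 2U\oplus (m-1)E_8\oplus E_7$.

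First, the weight of your lift is wrong. A Gritsenko lift comes from Jacobi forms of the definite index $(m-1)E_8\oplus E_7$; $\langle 2\rangle\oplus(m+1)E_8$ is indefinite and cannot be an index. Its weight is the vector-valued weight plus $n/2-1$, so the paper's lift has weight $n/2+11/2$, not ``$12$ plus something small''.

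Second, multiplying by a form vanishing on $B$ cannot keep the weight below $n$. The second factor would need weight $<n/2-11/2$, which is below the singular weight $n/2-1$, so it would be constant. Nothing in your construction makes the lift itself vanish on the $(-2)$-hyperplanes.

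Third, your fallback cannot produce a single form either. Comparing cusp forms of a fixed weight $a$ with restrictions to $B$ does not work, because Hirzebruch--Mumford volumes only control dimensions asymptotically in the weight.

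The missing idea is the criterion of \cite{Ma1}, which drops the requirement that the cusp form vanish on $B$. If $F$ is a cusp form of weight $w$, the canonical class is $(n-w)\mathcal{L}-B/2$ plus the effective class $w\mathcal{L}-\Delta$ cut out by $F$. So it suffices that $a\mathcal{L}-B/2$ be big for $a=n-w$. The paper proceeds as follows.

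It takes $F$ to be the Gritsenko lift of a weight $13/2$ cusp form for the Weil representation of $L$, so $a=4m-5$. It notes that $B$ is irreducible, defined by $K=\langle 2\rangle\oplus\langle 2\rangle\oplus mE_8$. It then proves bigness by comparing the growth of forms of weight divisible by $a$ on $L$ with the obstructions from Taylor coefficients along $B$, which are modular forms for $K$. This reduces to an explicit inequality between ${\rm vol}_{HM}({\rm O}^{+}(K))/{\rm vol}_{HM}({\rm O}^{+}(L))$ and a function of $a$ and $m$. The inequality is checked for $m\geq 5$ using Bernoulli-number formulas for both volumes.

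That computation is what produces the bound $n\geq 41$. Your proposal contains neither the cusp form of controlled weight nor the volume estimate.
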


\begin{proof}
    In the case $n=25$, we take the quasi-pullback of 
    the Borcherds $\Phi_{12}$ form by 
    $\langle 2 \rangle \oplus U \oplus 3E_8 \hookrightarrow 2U \oplus 3E_8$. 
    This produces a cusp form of weight $12+1=13$ for 
    $\langle 2 \rangle \oplus U \oplus 3E_8$, 
    with which we can use the method of \cite{GHS07}. 

    In the case $n\geq 41$, 
    we use the method of \cite{GHS}, \cite{Ma1}. 
    In what follows, we use the notation in \cite{Ma1}. 
    Let $L=\langle 2 \rangle \oplus U \oplus mE_8$. 
    First, by a computation similar to \cite{Ma1} \S 7.2, 
    we see that there exists a cusp form of weight $13/2$ 
    for ${\Mp}_2(\Z)$ with values in the Weil representation for $L$. 
    Taking the Gritsenko lifting, we obtain a cusp form of weight 
    $13/2+n/2-1 = n/2+11/2$ for ${\rm O}^{+}(L)$. 
    Hence, by the argument in \cite{Ma1} \S 1.1, 
    we see that $\mathcal{F}_n$ is of general type if 
    $a\mathcal{L}-B/2$ is big, where 
    $$
    a=n-(n/2+11/2) = n/2-11/2 = 4m-5. 
    $$
    The branch divisor $B$ is irreducible and defined by 
    the sublattice 
    $$
    K=\langle 2 \rangle \oplus \langle 2 \rangle \oplus m E_8 \simeq 
    2U\oplus (m-1)E_8 \oplus D_6
    $$ 
    of $L$. 
    Then, by the proof of \cite{Ma1} Proposition 4.3, 
    $a\mathcal{L}-B/2$ is big if the inequality 
    \begin{equation}\label{eqn: volume inequality}
      2 \cdot \frac{{\rm vol}_{HM}({\rm O}^{+}(K))}{{\rm vol}_{HM}({\rm O}^{+}(L))} 
      \; < \; 
      \left( 1+ \frac{1}{a} \right)^{-8m} \frac{2a}{8m+1}
    \end{equation}
    holds. 
    It is calculated in \cite{GHS} p.12 that 
    $$
    {\rm vol}_{HM}({\rm O}^{+}(L)) = 
    2^{-4m} \cdot \prod_{k=1}^{4m+1} \frac{|B_{2k}|}{2k},  
    $$
    where $B_{2k}$ are the Bernoulli numbers. 
    Similarly, it is calculated (implicitly) in \cite{Ma1} \S 7.2 that 
    $$
    {\rm vol}_{HM}({\rm O}^{+}(K)) = 
    \pi^{-4m-1} \cdot (4m)!  \cdot 
    \prod_{k=1}^{4m} \frac{|B_{2k}|}{2k} \cdot L(4m+1, \chi_{-4}), 
    $$
    where $L(s, \chi_{-4})$ is the Dirichlet $L$-function 
    for the Kronecker symbol $\chi_{-4} = \left( \frac{-4}{\cdot} \right)$. 
    Substituting these two formulas, 
    we see that \eqref{eqn: volume inequality} holds when $m\geq 5$. 
\end{proof}


\begin{thebibliography}{99}


\bibitem{AJ} 
Adams, J.; Johnson, J. F. 
\textit{Endoscopic groups and packets of non-tempered representations.} 
Compos. Math. \textbf{64} (1987), 271--309, 1987. 

\bibitem{AMR} 
Arancibia, N.; Moeglin, C.; Renard, D.  
\textit{Arthur packets for classical and unitary groups.} 
Ann. Fac. Sci. Toulouse Math. \textbf{27} no.5 (2018), 1023--1105. 

\bibitem{Arthur} 
Arthur, J.  
\textit{The endoscopic classification of representations.} 
AMS, 2013. 

\bibitem{AGI+}
Atobe, H.; Gan, W. T.; Ichino, A.; Kaletha, T.; M\'inguez, M.; Shin, S. W. 
\textit{Local intertwining relations and co-tempered A-packets of 
classical groups.} arXiv:2410.13504. 


\bibitem{Atobe_A_packets}
Atobe, H. 
\textit{Moeglin's explicit construction of local A-packets.} 
in ``On the Langlands program: endoscopy and beyond.'' 141--207, 
World Scientific, 2024.


\bibitem{Bo} 
Borel, A. 
\textit{Linear algebraic groups.} 2nd ed. 
GTM \textbf{126}, Springer, 1991. 


\bibitem{CL}
Chenevier, G.; Lannes, J. 
\textit{Automorphic forms and even unimodular lattices.} 
Springer, 2019. 

\bibitem{CR} 
Chenevier, G.; Renard, D. 
\textit{Level one algebraic cusp forms of classical groups of small rank.} 
Mem. Amer. Math. Soc. \textbf{1121}, 2015. 

\bibitem{EHW_83}
Enright, T.; Howe, R.; Wallach, N. 
\textit{A classification of unitary highest weight modules.} 
in ``Representation theory of reductive groups (Park City, Utah, 1982)'', 
97--143, Birkh\"auser, 1983

\bibitem{Fr}
Freitag, E. 
\textit{Holomorphe Differentialformen zu Kongruenzgruppen der Siegelsche Modulgruppe.} 
Invent. Math. \textbf{30} (1975), 181--196. 

\bibitem{vdG}van der Geer, G. 
\textit{Siegel modular forms and their applications.} 
The 1-2-3 of modular forms, 181--245, Springer, 2008. 


\bibitem{Ge}
Gersten, L.  
\textit{Basic quadratic forms.} 
GSM \textbf{90}, AMS, 2008. 

\bibitem{GJ}
Gelbart, S.; Jacquet, H. 
\textit{A relation between automorphic representations of $GL(2)$ 
and $GL(3)$.} 
Ann. Sci. \'Ecole Norm. Sup. (4) \textbf{11} (1978), no.4, 471--542. 

\bibitem{GH}
Getz, J. R.; Hahn, H. 
\textit{An introduction to automorphic representations.} 
GTM \textbf{300}, Springer, 2024. 

\bibitem{GHS07}
Gritsenko, V.; Hulek, K.; Sankaran, G. K.
\textit{The Kodaira dimension of the moduli of $K3$ surfaces.} 
Invent. Math. \textbf{169} (2007), 519--567. 

\bibitem{GHS}
Gritsenko, V.; Hulek, K.; Sankaran, G. K.
\textit{Hirzebruch-Mumford proportionality and 
locally symmetric varieties of orthogonal type.} 
Doc. Math. \textbf{13} (2008), 1--19. 

\bibitem{HMY}
Horinaga, S.; Maeda, Y.; Yamauchi, T. 
\textit{The Kodaira dimension of even-dimensional ball quotients.} 
arXiv:2507.22203. 
  
\bibitem{Hu}
Humphreys, J.~E. 
\textit{Representations of semisimple Lie algebras in the BGG category $\mathcal{O}$.} 
GSM \textbf{94}, AMS, 2008.  


\bibitem{2024_Ishimoto_Arthur}
Ishimoto, H. 
\textit{The endoscopic classification of representations of non-quasi-split odd special orthogonal groups.} 
Int. Math. Res. Not. \textbf{2024}, no.14, 10939--11012.

\bibitem{Ish} 
Ishimoto, H.  
\textit{Ibukiyama correspondences on automorphic forms on 
${\rm Mp}_4({\A}_{\Q})$ and ${\SO}_5({\A}_{\Q})$ 
generating large discrete series representations at the real place.} 
J. Number Theory, \textbf{277} (2025), 63--104.

\bibitem{Kaletha-Prasad_Bruhat-Tits}
Kaletha, T.; Prasad, G. 
\textit{Bruhat-Tits theory -- a new approach.} 
Cambridge Univ. Press, 2023. 

\bibitem{Ma0}
Ma, S. 
\textit{Rationality of the moduli spaces of 2-elementary $K3$ surfaces.} 
J. Algebraic Geom. \textbf{24} (2015), no.1, 81--158. 

\bibitem{Ma1}
Ma, S. 
\textit{On the Kodaira dimension of orthogonal modular varieties.} 
Invent. Math. \textbf{212} (2018), no. 3, 859--911.

\bibitem{Ma2}
Ma, S. 
\textit{Vector-valued orthogonal modular forms.} 
Mem. Eur. Math. Soc. \textbf{21}, 2025.  

\bibitem{Milne}
Milne, J. S. 
\textit{Algebraic groups.} 
Cambridge Univ. Press, 2017. 

\bibitem{MR} Moeglin, C.; Renard, D. 
\textit{On Arthur packages of classical real groups.} 
J. Eur. Math. Soc. \textbf{22} (2020), no.6, 1827--1892. 

\bibitem{Pet} 
Petersen, D. 
\textit{Cohomology of local systems on 
the moduli of principally polarized abelian surfaces.} 
Pac. J. Math. \textbf{275} (2015) no.1, 39--61. 

\bibitem{Po}Pommerening, K.  
\textit{Die Fortsetzbarkeit von Differentialformen auf arithmetischen Quotienten von hermiteschen symmetrischen R\"aumen.} 
J. Reine Angew. Math. \textbf{356} (1985), 194--220. 

\bibitem{RS}
Roberts, B.; Schmidt, R. 
\textit{Local newforms for $\mathrm{GSp}(4)$}.
Lect. Notes Math. \textbf{1918}, Springer, 2007.

\bibitem{RSY}
Roy, M.; Schmidt, R.; Yi, S. 
\textit{On counting cuspidal automorphic representations for $\mathrm{GSp}(4)$.} 
Forum Math. \textbf{33} (2021), no.3, 821--843.

\bibitem{Sch_arch}
Schmidt, R.
\textit{Archimedean aspects of Siegel modular forms of degree $2$.} 
Rocky Mt. J. Math. \textbf{47} (2017), no.7, 2381--2422.

\bibitem{Sch} 
Schmidt, R. 
\textit{Packet structure and paramodular forms.} 
Trans. Amer. Math. Soc. \textbf{370} (2018), no.5, 3085--3112. 

\bibitem{taibi}
Ta\"ibi, O. 
\textit{Dimensions of spaces of level one automorphic forms for split classical groups using the trace formula.} 
Ann. Sci. \'Ec. Norm. Sup\'er. (4) \textbf{50} (2017), 
no.2, 269--344. 

\bibitem{Taibi_AMF}
Ta\"ibi, O. 
\textit{Arthur's multiplicity formula for certain inner forms of special orthogonal and symplectic groups.} 
J. Eur. Math. Soc. \textbf{21} (2019), no.3, 839--871.

\bibitem{VZ}
Vogan, D. A.; Zuckerman, G. J. 
\textit{Unitary representations with nonzero cohomology.} 
Compos. Math., \textbf{53} (1984), 51--90. 

\bibitem{Wak}
Wakatsuki, S. 
\textit{Dimension formulas for spaces of vector-valued 
Siegel cusp forms of degree two.} 
J. Number Theory \textbf{132} (2012), no.1, 200--253. 

\bibitem{Wal}
 Wallach, N. R. 
 \textit{On the constant term of a square integrable automorphic form.} 
in ``Operator algebras and group representations, Vol. II (Neptun, 1980)", 
227--237, 1984. 

\bibitem{We}
Weissauer, R. 
\textit{Vektorwertige Siegelsche Modulformen kleinen Gewichtes.} 
J. Reine Angew. Math. \textbf{343} (1983), 184--202. 

\end{thebibliography}
\end{document}